\newcommand{\ud}[0]{\,\mathrm{d}}
\newcommand{\op}[1]{\operatorname{#1}}
\newcommand\abs[1]{\left|#1\right|} 
\DeclareMathOperator{\diam}{diam} 
\DeclareMathOperator{\dist}{\mathit{d}} 
 \DeclareMathOperator{\lip}{Lip}
\newcommand{\Chi}{\mathcal{X}} 
\let\emptyset\varnothing 
\DeclareMathOperator*{\loc}{loc} 
\newcommand{\aabs}[1]{|#1|}
\newcommand{\Babs}[1]{\Big|#1\Big|}
\newcommand{\Norm}[2]{\|#1\|_{#2}}
\newcommand{\ave}[1]{\langle #1\rangle}
\newcommand{\osc}[0]{\operatorname{osc}}
\newcommand{\calK}{\mathcal{K}}
\newcommand{\calQ}{\mathcal{Q}}
\newcommand{\calO}{\mathcal{O}}
\newcommand{\vare}{\varepsilon}
\newcommand{\N}{\mathbb{N}}
\newcommand{\Z}{\mathbb{Z}}
\newcommand{\R}{\mathbb{R}}
\newcommand{\C}{\mathbb{C}}
\newcommand{\BMO}{\mathrm{BMO}}
\newcommand{\VMO}{\mathrm{VMO}}
\newcommand{\VC}{\dot{\operatorname{VC}}}
\newcommand{\V}{\dot{\operatorname{V}}}
\def\XXint#1#2#3{\mkern3mu{\setbox0=\hbox{$#1{#2#3}{\int}$ }
\vcenter{\hbox{$#2#3$ }}\kern-.6\wd0}}
\theoremstyle{plain}
\newtheorem{theorem}{Theorem}
\newtheorem{lemma}[theorem]{Lemma}
\newtheorem{corollary}[theorem]{Corollary}
\newtheorem{example}[theorem]{Example}
\numberwithin{theorem}{section}
\newtheorem{remark}[theorem]{Remark}
\theoremstyle{definition}
\newtheorem{dfn}[theorem]{Definition}
\title{Approximation in H\"{o}lder spaces}
\author[Carlos Mudarra]{Carlos Mudarra}
\address[C.M.]{Department of Mathematical Sciences, Norwegian University of Science and Technology, 7941 Trondheim, Norway.}
\email{carlos.mudarra@ntnu.no}
\author[Tuomas Oikari]{Tuomas Oikari}
\address[T.O.]{Departament de Matem\`atiques, Universitat Aut\`onoma de Barcelona,
	Edifici C Facultat de Ci\`encies, 08193 Bellaterra (Barcelona), Catalonia}
\email{tuomas.oikari@gmail.com}
\date{\today}
 \keywords{smooth approximation, bounded support approximation, vanishing oscillation, H\"older space, Banach space}
 \subjclass[2020]{46T20, 41A29, 42B35, 46B20, 26B35}
\begin{document}

\maketitle

\begin{abstract}
	We introduce new vanishing subspaces of the homogeneous H\"{o}lder space $\dot{C}^{0,\omega}(X,Y)$ in the generality of a doubling modulus $\omega$ and normed spaces $X$ and $Y.$ For many couples $X,Y,$ we show these vanishing subspaces to completely characterize those H\"older functions that admit approximations, in the H\"{o}lder seminorm, by smooth, Lipschitz and boundedly supported functions. 
	We present connections to bi-parameter harmonic analysis on the Euclidean space by providing
	applications to the compactness of the bi-commutator of two Calder\'on-Zygmund operators
	


\end{abstract}

\section{Introduction and main results}
\subsection{Introduction}
We consider normed spaces $X, Y$ and moduli of continuity $\omega$ 
and study the H\"{o}lder spaces $\dot{C}^{0,\omega}(X,Y)$ that consist of those functions $f:X \to Y$ for which
\[
\Norm{f}{\dot{C}^{0,\omega}(X,Y)} = \sup_{ \substack{x,y\in X \\ x \neq y}}\frac{\| f(x)-f(y)\|_Y}{\omega(\| x-y\|_X)}<\infty.
\]
We establish various full characterizations of approximability in the class $\dot{C}^{0,\omega}(X,Y)$ by Lipschitz, smooth and boundedly supported functions in terms of three vanishing subspaces of $\dot{C}^{0,\omega}(X,Y)$ that demand vanishing behaviour of the H\"older norm of functions on \emph{small, large} and \emph{far} away scales. We will define these subspaces soon, but before that we make a few general observations.

First of all, if $Y$ is a Banach space, then so is $\dot{C}^{0,\omega}(X,Y),$ when equipped with the norm
\begin{equation}\label{eq:BanachNorm}
|||f|||_{C^{0,\omega}(X,Y)}=\|f\|_{\dot{C}^{0,\omega}(X,Y)} + \|f(0)\|_Y.
\end{equation}
Dropping the term $ \|f(0)\|_Y,$ the seminorm $\| \cdot \|_{\dot{C}^{0,\omega}(X,Y)}$ becomes a norm when the functions are identified with equivalence classes modulo additive constants.
In the particular case $\omega(t)= t^ \alpha,$ $\alpha\in (0,1),$ we recover the H\"older spaces $\dot{C}^{0,\alpha}(X,Y).$ 
Moreover, provided $\omega$ is any reasonable modulus of continuity -- at least $\omega(t)\to 0,$ as $t\to 0$ -- the spaces $\dot C^{0,\omega}(X,Y)$ become subspaces of uniformly continuous functions.

The theory of \textit{uniform} approximation of continuous functions by smooth functions has been extensively studied and developed in the last decades.  For instance, when $X$ is a Hilbert space, Lasry and Lions \cite{LasryLions} showed that Lipschitz real-valued functions can be uniformly approximated by $C^{1,1}$ Lipschitz functions. If $X$ is moreover separable, then a theorem of Moulis \cite{Moulis}  allows these approximations to be upgraded to $C^\infty$ approximations.  Later, Cepedello-Boiso \cite{Cepedello} obtained several results concerning $C^1$ or $C^{1,\alpha}$ approximation of uniformly continuous functions in super-reflexive Banach spaces $X$ --  see also our Theorem \ref{ThmApproxsuper-reflexive} below and the very recent work of Johanis \cite{Jo24}, that contribute to this line of research.  Then, H\'{a}jek and Johanis \cite{HajJoh10} proved that when $X$ is separable with a $C^k$ smooth and Lipschitz bump, and either $X$ or $Y$ is super-reflexive, then  uniformly continuous mappings admit approximations by $C^k$ maps that are still uniformly continuous. Lastly, we mention the real-analytic approximations of Azagra, Fry, and Keener \cite{AzFrKe12} when $X$ is a Banach space with a separating polynomial; then, Lipschitz real-valued functions on $X$ can be uniformly approximated by real analytic Lipschitz functions. For more background references on smoothness and renorming on Banach spaces, and the related approximation results, we refer to the monographs \cite{BenyaminiLindenstrauss}, by Benyamini and Lindenstrauss; \cite{DevilleGodefroyZizler}, by Deville, Godefroy, and Zizler; and \cite{HajJoh10}, by H\'{a}jek and Johanis.
 
In the present article, we prove full characterizations of approximation with H\"older type rate of convergence taking the place of uniform convergence. We note that these two types of convergence do not follow from each other; see Appendix \ref{appendix:comparison} for a detailed comparison with simple observations and examples. 
Our main result is best described as the recognition of the vanishing scales \emph{small, large} and \emph{far} as precisely the concept that allows a complete description of the closure of smooth, Lipschitz and boundedly supported functions with respect to the $\dot{C}^{0,\omega}$ seminorm. We notate
\[
\osc^{\omega}_{\delta}(f) =  \sup_{ \substack{x\not=y\in X \\ \| x-y\|_X = \delta}}\frac{ \| f(x)-f(y)\|_Y}{\omega(\| x-y \|_X)}
,\qquad 
\osc^{\omega}_{(x,y)}(f) = \frac{ \| f(x)-f(y)\|_Y}{\omega(\| x-y \|_X)},
\]
and then define these three vanishing scales as 
\begin{align*}
	\V^{0,\omega}_{\op{small}}(X,Y) &= \big\{  f:X\to Y \, : \, \lim_{\delta \to 0} \osc^{\omega}_{\delta}(f)  = 0 \big\},\, \\
	\V^{0,\omega}_{\op{large}}(X,Y) &= \big\{ f:X\to Y \, : \, \lim_{\delta \to \infty} \osc^{\omega}_{\delta}(f) = 0  \big\}, \\
	\V^{0,\omega}_{\op{far}}(X,Y) &= \big\{ f:X\to Y \, : \, \lim_{\delta \to \infty} \sup_{\min(\|x\|,\|y\|) >\delta} \osc^{\omega}_{(x,y)}(f)  = 0 \big\}.
\end{align*}
\begin{dfn}\label{def:VCome}  For each scale $\Gamma\in\{\op{small}, \op{large}, \op{far}\},$ define
	\begin{align*}
		\VC^{0,\omega}_{\Gamma}(X,Y) &= \dot{\op{V}}^{0,\omega}_{\Gamma}(X,Y) \cap \dot C^{0,\omega}(X,Y), \\
		\VC^{0,\omega}(X,Y) &= \VC^{0,\omega}_{\op{small}}(X,Y) \cap \VC^{0,\omega}_{\op{far}}(X,Y) \cap \VC^{0,\omega}_{\op{large}}(X,Y).
	\end{align*}
\end{dfn}
If $Y$ is a Banach space, then so are all of $\VC^{0,\omega}_{\Gamma}(X,Y)$ as spaces of modulo additive constant equivalence classes; for full details, see Remark \ref{remark:VCareclosedsubspaces} below. From now on, whenever no confusion is possible, we start dropping the source and target spaces from the norms and simplify notation with $\| x \|_X = \|x\|$ and $\| f(x) \|_Y = |f(x)|.$ In the same way, we write $\dot{C}^{0,\omega}(X,Y) = \dot{C}^{0,\omega}(X)= \dot{C}^{0,\omega},$ etc.

We assume our moduli $\omega$ to be non-decreasing and satisfy
\begin{align}
	&\lim_{t\to 0}\omega(t) = 0,\qquad \lim_{t\to \infty}\omega(t) = \infty, \label{eq:mod:coer0} \\
	&\lim_{t \to 0} t/\omega(t)=0, \label{eq:mod:coer1} \\
	&\omega(2t)\leq C_{\op{db}}\omega(t),\qquad \mbox{for}  \qquad t\in (0,\infty) \label{eq:mod:db},
\end{align}
where the doubling constant satisfies $C_{\op{db}}>1.$  
This class of moduli includes the H\"older moduli $\omega(t) := t^{\alpha},$ when $\alpha\in(0,1),$ but also other moduli that behave as $1/\log(1/t)$ close to the origin, which appear in the study of regularity for some partial differential equations, see e.g. Liao \cite{LIAO2022}. 
Our results involve several assumptions on the moduli with different theorems requiring slightly different assumptions, see Table \ref{table} below for a summary.

The classes $\VC^{0,\alpha}_{\op{small}}$ are also called the \textit{little H\"older spaces}. These are important  in the study of Lipschitz algebras in metric spaces, and we refer to the monograph by Weaver \cite[Chapters 4 and 8]{Weaverbook} for a background. See also the recent monograph by D. Mitrea, I. Mitrea, and M. Mitrea \cite[Chapter 3]{Mitreabook} for a detailed exposition of the H\"older spaces in the setting of Ahlfors regular sets and their connection with functions of vanishing mean oscillation.

\subsection*{Notation}

We denote $A \lesssim B$, if $A \leq C B$ for some constant $C>0$ depending only on an underlying function space that is fixed relative to ongoing considerations. 
For example, $C$ could depend on the modulus $\omega,$ or the dimension $n$ in the case $X=\R^n.$
Then $A \sim B$, if $A \lesssim B$ and $B \lesssim  A.$ Subscripts or variables on constants and quantifiers $C_{a},C(a),\lesssim_{a}$ indicate their dependence on those subscripts.

\subsection{Main results}   
We begin by stating a result without any smoothness on approximations by functions with bounded support. Many of our later results build on this one.

\begin{theorem}\label{thm:approx:bs} Let $X,Y$ be normed spaces, the modulus $\omega$ satisfy \eqref{eq:mod:coer0}, \eqref{eq:mod:coer1} and \eqref{eq:mod:db}.
	Then, there holds that 
	\begin{align*}
		\VC^{0,\omega}(X,Y) =  \overline{\VC^{0,\omega}_{\op{small}}(X,Y) \cap C_{\op{bs}}(X,Y)}^{\dot C^{0,\omega}(X,Y)}.
	\end{align*}
\end{theorem}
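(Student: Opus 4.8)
The plan is to prove the two inclusions separately, with the nontrivial content being that every $f \in \VC^{0,\omega}(X,Y)$ can be approximated in the $\dot C^{0,\omega}$ seminorm by functions in $\VC^{0,\omega}_{\op{small}} \cap C_{\op{bs}}$.

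First I would check the easy inclusion
\[
\overline{\VC^{0,\omega}_{\op{small}}(X,Y) \cap C_{\op{bs}}(X,Y)}^{\dot C^{0,\omega}} \subseteq \VC^{0,\omega}(X,Y).
\]
Since $\VC^{0,\omega}(X,Y)$ is a closed subspace of $\dot C^{0,\omega}(X,Y)$ (modulo constants) by Remark~\ref{remark:VCareclosedsubspaces}, it suffices to verify that a boundedly supported $f \in \VC^{0,\omega}_{\op{small}}$ already lies in $\VC^{0,\omega}(X,Y)$, i.e. that it also has vanishing oscillation on large and far scales. This is immediate: if $\supp f \subseteq B(0,R)$ and $\|x-y\|=\delta$ is huge, then either both $x,y$ are outside $B(0,R)$ (so the difference quotient is $0$) or at least one lies in $B(0,R)$, forcing $\|f(x)-f(y)\| \le 2\|f\|_\infty \le 2\|f\|_{\dot C^{0,\omega}}\,\omega(2R)$ while $\omega(\delta) \to \infty$; the same bookkeeping handles the far scale. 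So $C_{\op{bs}} \cap \dot C^{0,\omega} \subseteq \VC^{0,\omega}_{\op{large}} \cap \VC^{0,\omega}_{\op{far}}$, and intersecting with $\VC^{0,\omega}_{\op{small}}$ gives the claim.

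For the main inclusion, fix $f \in \VC^{0,\omega}(X,Y)$ and $\vare>0$; I would produce a boundedly supported $g \in \VC^{0,\omega}_{\op{small}}$ with $\|f-g\|_{\dot C^{0,\omega}} < \vare$ by a truncation-and-regularization scheme. The natural move is to multiply $f$ by a radial cutoff: choose $R$ large and set $g = \varphi_R \cdot (f - f(0))$ (subtracting the constant $f(0)$ is harmless modulo constants and is needed so that the truncated function is small where the cutoff transitions), where $\varphi_R$ is $1$ on $B(0,R)$, $0$ outside $B(0,2R)$, and Lipschitz with constant $\sim 1/R$. The key estimate is to bound $\|f - g\|_{\dot C^{0,\omega}}$: split a pair $x,y$ into the regimes where both are near the origin (error controlled by $\osc^\omega$ of $f-f(0)$ on far scales, which is small for $R$ large since $f \in \VC^{0,\omega}_{\op{far}}$), both are far out (where $g=0$, so again $\osc^\omega_{\op{far}}$ of $f$), and the transition annulus. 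In the transition region one estimates $\|(f-g)(x)-(f-g)(y)\|$ using the product rule: one term is $\|(1-\varphi_R(x))\|\,\|f(x)-f(y)\|/\omega(\|x-y\|)$ which is $\le \osc^\omega_{(x,y)}(f)$ and small by the far condition, and the cross term $\|f(y)-f(0)\|\,\|\varphi_R(x)-\varphi_R(y)\|/\omega(\|x-y\|) \lesssim \|f\|_{\dot C^{0,\omega}}\,\omega(\|y\|)\cdot \frac{\|x-y\|}{R\,\omega(\|x-y\|)}$; here \eqref{eq:mod:coer1} (the ratio $t/\omega(t)\to 0$) together with doubling lets one absorb this, distinguishing $\|x-y\| \lesssim R$ (use $t/\omega(t)$ small, $\omega(\|y\|)\lesssim\omega(R)$) from $\|x-y\| \gtrsim R$ (then $\omega(\|x-y\|)\gtrsim\omega(R)\gtrsim\omega(\|y\|)$ modulo doubling). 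This shows $g$ is a good $\dot C^{0,\omega}$-approximant of $f$; moreover $g \in C_{\op{bs}}$ and, being a product of a Lipschitz cutoff with a function in $\VC^{0,\omega}_{\op{small}}$, one checks $g \in \VC^{0,\omega}_{\op{small}}$ as well (on small scales $\osc^\omega_\delta(g)$ is controlled by $\osc^\omega_\delta(f) + \|f\|_\infty \cdot \delta/(R\,\omega(\delta))$, both $\to 0$ as $\delta\to0$).

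The main obstacle I anticipate is the transition-annulus estimate and, in particular, making the cross term uniformly small: one must carefully exploit all three hypotheses on $\omega$ — \eqref{eq:mod:coer1} to kill $\|x-y\|/(R\,\omega(\|x-y\|))$ on bounded scales, doubling \eqref{eq:mod:db} to compare $\omega(\|y\|)$ with $\omega(R)$ and $\omega(\|x-y\|)$ on the various subregimes, and $\omega(t)\to\infty$ for the large/far bookkeeping — and one must choose $R$ (depending on $\vare$, $f$, and $\omega$) in the right order. A secondary technical point is that $\VC^{0,\omega}_{\op{far}}$ rather than merely $\VC^{0,\omega}_{\op{large}}$ is what is really used: the difference quotient for a pair with one point at the origin and the other far out is measured by the far-scale oscillation, so the membership $f \in \VC^{0,\omega}_{\op{far}}$ is exactly the hypothesis that feeds the truncation argument; this is presumably the reason the statement isolates the far scale as a separate vanishing condition.
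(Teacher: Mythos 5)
Your plan is genuinely different from the paper's: you truncate multiplicatively with a Lipschitz cutoff $\varphi_R$, whereas the paper composes $f$ with a radial retraction $\tau_M$ that is the identity on $B(0,M)$, scales by $\left(\frac{2M-\|x\|}{M}\right)^2$ on $B(0,2M)\setminus B(0,M)$, and vanishes outside $B(0,2M)$. Unfortunately, your cross-term estimate has a genuine gap. You bound the cross term by $\|f\|_{\dot C^{0,\omega}}\,\omega(\|y\|)\cdot\frac{\|x-y\|}{R\,\omega(\|x-y\|)}$ and claim that \eqref{eq:mod:coer1} and doubling absorb it, distinguishing $\|x-y\|\lesssim R$ from $\|x-y\|\gtrsim R$. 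But \eqref{eq:mod:coer1} says $t/\omega(t)\to 0$ only as $t\to 0$; it gives no smallness for $t$ of order $1$ or of order $R$. For intermediate $t$ you would need $\frac{\omega(R)}{R}\cdot\frac{t}{\omega(t)}$ to be bounded, i.e.\ $\omega(s)/s$ to be quasi-decreasing, and this is \emph{not} a consequence of \eqref{eq:mod:coer0}, \eqref{eq:mod:coer1}, \eqref{eq:mod:db}. Concretely, $\omega(t)=t^{1/2}$ for $t\le 1$ and $\omega(t)=t^{3/2}$ for $t\ge 1$ satisfies all three hypotheses (with $C_{\op{db}}=2\sqrt2$), yet $\omega(R)/R=\sqrt{R}\to\infty$; taking $\|x-y\|\approx 1$ in the transition annulus, your cross term is of order $\sqrt{R}$ times the far-scale oscillation $\varepsilon(R)$ of $f$, and since $\varepsilon(R)$ can decay arbitrarily slowly, the product need not tend to zero. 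Without the cross term controlled, $g=\varphi_R(f-f(0))$ need not be a $\dot C^{0,\omega}$-approximant.

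This is exactly the difficulty the paper's construction is designed to avoid. First, it truncates by composition rather than multiplication, so the only relevant control is a Lipschitz-type estimate $\|\tau(x)-\tau(z)\|\lesssim \kappa\|x-z\|$ combined with doubling of $\omega$ — no product-rule cross term arises. Second, and crucially, the scalar factor in the transition annulus decays \emph{quadratically}; this produces the contraction estimate \eqref{eq:tau3}, $\|\tau(x)-\tau(z)\|\lesssim R\,M^{-1/2}\|x-z\|$ for $x\notin B(0,M)$ with $\tau(x),\tau(z)\in B(0,R)$, which the paper explicitly notes fails if the power $2$ is replaced by $1$ (i.e.\ by a plain Lipschitz cutoff, which is what your $\varphi_R$ amounts to). Third, the proof is genuinely two-scale: $r$ and $R$ are fixed from the small and far vanishing conditions, and an auxiliary $M=M(\varepsilon,r,R)\gg R$ is then chosen so that, e.g.\ in \eqref{eq:dens33}, $\omega(RM^{-1/4})\le\varepsilon\omega(r)$. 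Your single radial scale $R$ does not provide the extra degree of freedom needed to close the estimate. Your easy inclusion, the reduction to Remark \ref{remark:VCareclosedsupsets} (closedness of $\VC^{0,\omega}$), and the observation that the far condition in its $\max$-form (Lemma \ref{lem:dist=far}) is the operative hypothesis are all correct; the issue is confined to the cross-term control in the hard inclusion.
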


Here and below, for any $\mathcal{C}(X,Y) \subset \dot C^{0,\omega}(X,Y),$ we denote by $\overline{\mathcal{C}(X,Y)}^{\dot C^{0,\omega}(X,Y)}$ the closure of 
$\mathcal{C}(X,Y)$ under the $\dot{C}^{0,\omega}(X,Y)$ seminorm. We refer here to the family of those $f\in \dot{C}^{0,\omega}(X,Y)$ for which there exists a sequence $\lbrace f_n \rbrace_n \subset \mathcal{C}(X,Y)$ so that $\lim_n \|f_n-f\|_{\dot{C}^{0,\omega}(X,Y)} =0.$ The subscripted set $\mathcal{C}_{\op{bs}}(X,Y)$ consists of boundedly supported functions in $\mathcal{C}(X,Y).$ In Theorem \ref{thm:approx:bs} $C_{\op{bs}}(X,Y)$ denotes continuous functions with bounded support. In particular, the functions in $\VC^{0,\omega}_{\op{small}}(X,Y)$ are continuous, and the intersection with $C_{\op{bs}}(X,Y)$ gives the approximations bounded support.

While Theorem \ref{thm:approx:bs} does not provide any smoothness, it turns out to be a crucial step in our other theorems. For instance, letting $X= \R^n$ and $Y$ be a Banach space, Theorem \ref{thm:approx:bs} coupled with a mollification argument gives  $C^\infty_{\op{bs}}(\R^n,Y)$ approximations, this is Theorem \ref{thm:main:Euc} below.  Both of these theorems will be proved in Section \ref{sect:approx:bs}.  

Our results are not confined to functions with a finite-dimensional source $X,$ and letting $Y= \R,$ we prove smooth and Lipschitz approximation for the classes $ \VC^{0,\omega}_{\op{small}}(X,\R) $ and $\VC^{0,\omega}(X,\R).$ (The corresponding approximations for $\C^n$-valued functions in place of $\R$-valued then follow after splitting into the coordinate functions and then each of these to the real and imaginary parts.)  In particular, with $X$ being a separable normed space, we have the following general result.   
\begin{theorem}\label{thm:approx:C^k}
	Let $k\in \N \cup\lbrace \infty \rbrace$ and $X$ be a separable normed space admitting a $C^{k}$ and Lipschitz bump function. Then, for a modulus $\omega$ satisfying  \eqref{eq:mod:coer0}, \eqref{eq:mod:coer1} and \eqref{eq:mod:db}, the following hold:
	\begin{itemize}
		\item[(i)] $ \overline{C^k(X)\cap \lip(X) \cap \dot{C}^{0,\omega}(X)}^{\dot{C}^{0,\omega}(X)}=\VC_{\op{small}}^{0,\omega}(X),$
		\item[(ii)] $ \overline{C^k_{\op{bs}}(X)\cap \mathrm{Lip}(X)}^{\dot{C}^{0,\omega}(X)}=\VC^{0,\omega}(X).$
	\end{itemize}
\end{theorem}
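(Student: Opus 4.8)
Each of (i), (ii) is proved by two inclusions, with the ``$\subseteq$'' ones soft. If $g$ is $L$-Lipschitz then $\osc^{\omega}_{\delta}(g)\le L\,\delta/\omega(\delta)\to 0$ as $\delta\to0$ by \eqref{eq:mod:coer1}, so $C^k(X)\cap\lip(X)\cap\dot C^{0,\omega}(X)\subseteq\VC^{0,\omega}_{\op{small}}(X)$; if in addition $\supp g\subseteq B(0,R)$ then $g$ is bounded, hence in $\dot C^{0,\omega}(X)$ (bounded $+$ Lipschitz $\Rightarrow\dot C^{0,\omega}$ by \eqref{eq:mod:coer1}, \eqref{eq:mod:coer0}), $\osc^{\omega}_{\delta}(g)\le\|g\|_{\infty}/\omega(\delta)\to0$ as $\delta\to\infty$ by \eqref{eq:mod:coer0}, and the far-oscillation of $g$ vanishes identically once $\delta\ge R$; thus $C^k_{\op{bs}}(X)\cap\lip(X)\subseteq\VC^{0,\omega}(X)$. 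Since $\VC^{0,\omega}_{\op{small}}(X)$ and $\VC^{0,\omega}(X)$ are closed in $\dot C^{0,\omega}(X)$ (Remark \ref{remark:VCareclosedsubspaces}), both ``$\subseteq$'' follow. For ``$\supseteq$'', using Theorem \ref{thm:approx:bs} to write any $f\in\VC^{0,\omega}(X)$ as a $\dot C^{0,\omega}$-limit of boundedly supported members of $\VC^{0,\omega}_{\op{small}}(X)$, it suffices to prove the following smoothing claim: every $h\in\VC^{0,\omega}_{\op{small}}(X)$ is a $\dot C^{0,\omega}$-limit of functions in $C^k(X)\cap\lip(X)\cap\dot C^{0,\omega}(X)$, and of boundedly supported ones if $h$ is boundedly supported. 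I would establish this in two stages.

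\textbf{Stage 1: Lipschitz regularization.} For $\lambda>0$ set $h^{\lambda}(x)=\inf_{z\in X}\big(h(z)+\lambda\|x-z\|\big)$, the normed-space substitute for mollification. Functions in $\dot C^{0,\omega}(X)$ grow at most linearly (chain $0$ to $x$ by $\lceil\|x\|\rceil$ unit steps), so for $\lambda$ exceeding the growth rate of $h$ the map $h^{\lambda}$ is finite and $\lambda$-Lipschitz. The decisive point is a scale-uniform estimate: given $x,y$ and an $\epsilon$-minimizer $z_{\ast}$ for $h^{\lambda}(x)$, testing the infimum defining $h^{\lambda}(y)$ against the competitor $z_{\ast}+(y-x)$ gives
\[
h^{\lambda}(x)-h^{\lambda}(y)\ \ge\ h(z_{\ast})-h\big(z_{\ast}+(y-x)\big)-\epsilon\ \ge\ -\osc^{\omega}_{\|x-y\|}(h)\,\omega(\|x-y\|)-\epsilon,
\]
and symmetrically for $h^{\lambda}(y)-h^{\lambda}(x)$; letting $\epsilon\to0$ yields $\osc^{\omega}_{s}(h^{\lambda})\le\osc^{\omega}_{s}(h)$ for every $s>0$, so $h^{\lambda}\in\dot C^{0,\omega}(X)$ with $\|h^{\lambda}\|_{\dot C^{0,\omega}}\le\|h\|_{\dot C^{0,\omega}}$. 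Moreover $0\le h-h^{\lambda}\le\eta(\lambda)$ with $\eta(\lambda)=\sup_{t>0}\big(\osc^{\omega}_{t}(h)\,\omega(t)-\lambda t\big)\to0$ as $\lambda\to\infty$ (vanishing of $\osc^{\omega}_{t}(h)$ as $t\to0$ handles small $t$, linear growth handles large $t$). Splitting $\|h-h^{\lambda}\|_{\dot C^{0,\omega}}$ at a radius $r$: on $\|x-y\|\ge r$ the difference quotient is $\le 2\eta(\lambda)/\omega(r)$, and on $\|x-y\|<r$ it is $\le\osc^{\omega}_{\|x-y\|}(h)+\osc^{\omega}_{\|x-y\|}(h^{\lambda})\le 2\sup_{t\le r}\osc^{\omega}_{t}(h)$. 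Choosing $r$ small (as $h\in\VC^{0,\omega}_{\op{small}}$) and then $\lambda$ large makes $\|h-h^{\lambda}\|_{\dot C^{0,\omega}}$ as small as desired.

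\textbf{Stage 2: smoothing the Lipschitz function.} Now invoke the smooth approximation theory on separable spaces with a $C^k$ Lipschitz bump (H\'{a}jek--Johanis \cite{HajJoh10} and the references in the Introduction): a Lipschitz map $X\to\R$ is a uniform limit of $C^k$ maps whose Lipschitz constants are bounded by a fixed multiple of its own. Applied to $h^{\lambda}$ this yields $g\in C^k(X)$ with $\|g-h^{\lambda}\|_{\infty}\le\eta$ and $\|g\|_{\lip}\le C\lambda$; then $g-h^{\lambda}$ is bounded and Lipschitz, so $g\in\dot C^{0,\omega}(X)$, and splitting $\|g-h^{\lambda}\|_{\dot C^{0,\omega}}$ at a radius $r$ bounds the difference quotient by $2\eta/\omega(r)$ on $\|x-y\|\ge r$ and by $(C+1)\lambda\sup_{t\le r}t/\omega(t)$ on $\|x-y\|<r$; choosing $r$ small (using \eqref{eq:mod:coer1}) and then $\eta$ small makes this arbitrarily small. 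Combining Stages 1 and 2 proves the smoothing claim without the support constraint, which already gives (i) and the ``$\supseteq$'' part of (ii) once bounded support is restored.

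\textbf{Restoring bounded support; the main obstacle.} Stage 1 destroys compact support, so for (ii) one applies Stages 1--2 to each boundedly supported $f_n\in\VC^{0,\omega}_{\op{small}}(X)$ produced by Theorem \ref{thm:approx:bs}, obtaining $C^k$ Lipschitz $g_n$ that are both $\dot C^{0,\omega}$- and sup-norm-close to $f_n$, and multiplies by a fixed $C^k$ Lipschitz plateau $\chi_n$ equal to $1$ on a ball containing $\supp f_n$ (such plateaus exist because $X$ carries a $C^k$ Lipschitz bump). Since $(\chi_n-1)f_n\equiv0$ we have $\chi_n g_n-f_n=\chi_n(g_n-f_n)$, so the Leibniz bound $\|uv\|_{\dot C^{0,\omega}}\le\|u\|_{\infty}\|v\|_{\dot C^{0,\omega}}+\|v\|_{\infty}\|u\|_{\dot C^{0,\omega}}$ shows that taking $g_n$ close enough to $f_n$ forces $\|\chi_n g_n-f_n\|_{\dot C^{0,\omega}}\to0$; as $\chi_n g_n\in C^k_{\op{bs}}(X)\cap\lip(X)$ and $f_n\to f$, we get $\chi_n g_n\to f$, completing (ii). I expect the main obstacle to be Stage 1 --- recognizing the inf-convolution as the right regularization in a general normed space and extracting the two-sided, scale-uniform oscillation bound above: a more naive construction (e.g.\ a $C^k$ Lipschitz partition of unity evaluated at net values $h(x_i)$) fails to control the $\dot C^{0,\omega}$ seminorm at scales below the smoothing scale when $\omega$ is only assumed doubling. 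Stage 2 is then a black box, but a substantial one, resting for infinite-dimensional $X$ on the construction of $C^k$ Lipschitz partitions of unity subordinate to uniform covers.
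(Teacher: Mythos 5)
Your proof is correct and is essentially the same as the paper's (the statement is restated and proved as Theorem \ref{thm:main:ApproxSeparable}): your Stage 1 is Lemma \ref{lem:VCsLipApprox}, your Stage 2 is the citation of H\'{a}jek--Johanis \cite[Corollary 15]{HajJoh10} combined with Lemma \ref{lem:Lip-Comeg:Approx}, and your bounded-support step is Lemma \ref{lem:SmoothLipBsApprox}. The only differences are cosmetic: you control $\|h-h^{\lambda}\|_{\dot C^{0,\omega}}$ via the scale-preservation bound $\osc^{\omega}_s(h^{\lambda})\le\osc^{\omega}_s(h)$ rather than the paper's direct competitor estimate for $(f-f_n)(x)-(f-f_n)(y)$, and you restore bounded support via the Leibniz bound applied to $\chi_n(g_n-f_n)$ rather than by bounding $\lip(\varphi g_n)$ and re-invoking Lemma \ref{lem:Lip-Comeg:Approx}.
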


Notice that, together with the $C^k$ order of smoothness, Theorem \ref{thm:approx:C^k} guarantees Lipschitz approximations. In particular, any separable normed space $X$ with an equivalent norm of class $C^k$ satisfies the assumption of Theorem \ref{thm:approx:C^k}, see Remark \ref{remark:smoothnessofLpspaces}. Moreover, when $X$ is a separable Hilbert space, one can arrange $C^\infty$ smooth approximations, thus obtaining an infinite dimensional version of Theorem \ref{thm:main:Euc}, see Corollary \ref{CorollaryHilbertSeparableApprox}.
We will restate Theorem \ref{thm:approx:C^k} and prove it in Subsection \ref{subsect:LipSmoApprox}.

\smallskip

Another class of spaces $X$ on which we obtain smooth approximations are super-reflexive Banach spaces. A theorem of Pisier \cite{Pisier} says that any super-reflexive space admits a renorming with \textit{modulus of smoothness of power type} $1+\alpha,$ for some $\alpha \in (0,1]$ -- see \eqref{C1alpharenorming} below for the precise formulation. These spaces contain the Hilbert (separable or not) spaces, together with many of the classical Banach function spaces, such as the $L^p$ spaces, with $1< p< \infty;$ see Remark \ref{rem:smoothnessL^psuper}.  In the following theorem we establish $C^{1,\alpha}$ and Lipschitz approximations for the classes $\VC^{0,\omega}(X)$ and $\VC^{0,\omega}_{\op{small}}(X).$

\begin{theorem}\label{thm:intro:super-reflexive}
Let $X$ be a super-reflexive space that admits a renorming with modulus of smoothness of power type $1+\alpha,$  $\alpha \in (0,1].$ Then, for a modulus $\omega$ satisfying  \eqref{eq:mod:coer0}, \eqref{eq:mod:coer1} and \eqref{eq:mod:db}, the following hold:
	\begin{itemize}
		\item[(i)] $\overline{C^{1,\alpha}(X) \cap \mathrm{Lip}(X) \cap \dot{C}^{0,\omega}(X)}^{\dot{C}^{0,\omega}(X)} = \VC^{0,\omega}_{\op{small}}(X),$
		\item[(ii)] $ \overline{C^{1,\alpha}_{\op{bs}}(X)}^{\dot{C}^{0,\omega}(X)} =\VC^{0,\omega}(X).$
	\end{itemize}
\end{theorem}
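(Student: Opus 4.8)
The plan is to follow the same strategy that proves Theorem \ref{thm:approx:C^k}, replacing the use of a $C^k$ Lipschitz bump by Pisier's renorming: by \cite{Pisier}, $X$ admits an equivalent norm $\|\cdot\|$ which is $C^1$ away from the origin with $\alpha$-H\"older Fr\'echet derivative on the unit sphere, i.e. the square $\|\cdot\|^2$ (or a suitable convolution) furnishes a $C^{1,\alpha}$ and Lipschitz function on $X$; composing with a real $C^\infty$ cutoff produces a $C^{1,\alpha}$ Lipschitz bump. With such a bump in hand one has $C^{1,\alpha}$ partitions of unity, and the two inclusions ``$\supseteq$'' in (i) and (ii) should be obtained exactly as in the separable case of Theorem \ref{thm:approx:C^k}, except that separability of $X$ is no longer available, so the partition of unity must be organized by a locally finite cover of $X$ by balls on each of which the target function is nearly constant on the relevant scale (this is where the membership in $\VC^{0,\omega}_{\op{small}}$, resp. $\VC^{0,\omega}$, is used). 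The reverse inclusions ``$\subseteq$'' are the easy direction: $C^{1,\alpha}\cap\Lip\cap\dot C^{0,\omega}$-functions visibly lie in $\VC^{0,\omega}_{\op{small}}$ because a Lipschitz function has $\osc^\omega_\delta \lesssim \delta/\omega(\delta)\to 0$ by \eqref{eq:mod:coer1}; for (ii) one additionally notes bounded support forces the $\op{large}$ and $\op{far}$ scales to vanish, and $\VC^{0,\omega}_\Gamma$ is closed in the seminorm (Remark \ref{remark:VCareclosedsubspaces}), so the closure stays inside $\VC^{0,\omega}$.

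For the hard direction, I would first treat (ii) and reduce to (i) using Theorem \ref{thm:approx:bs}: given $f\in\VC^{0,\omega}(X)$, that theorem yields $g\in\VC^{0,\omega}_{\op{small}}(X)\cap C_{\op{bs}}(X)$ with $\|f-g\|_{\dot C^{0,\omega}}$ small, so it suffices to approximate such a boundedly supported $g\in\VC^{0,\omega}_{\op{small}}$ by $C^{1,\alpha}_{\op{bs}}$ Lipschitz functions; multiplying a $C^{1,\alpha}$ Lipschitz approximant of $g$ by a fixed $C^{1,\alpha}$ Lipschitz cutoff equal to $1$ on a large ball containing $\supp g$ restores bounded support at the cost of an error controlled by $\|\cdot\|_{\dot C^{0,\omega}}$ on an annulus, which is small because $g$ is small there. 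Hence everything comes down to (i): approximating $f\in\VC^{0,\omega}_{\op{small}}(X)$ in the $\dot C^{0,\omega}$-seminorm by $C^{1,\alpha}$ Lipschitz functions of finite $\dot C^{0,\omega}$-norm. Fix $\varepsilon>0$ and choose $r>0$ with $\osc^\omega_\delta(f)<\varepsilon$ for all $\delta\le r$; take a locally finite open cover $\{B(x_i,r)\}_i$ of $X$ with a subordinate $C^{1,\alpha}$ Lipschitz partition of unity $\{\varphi_i\}$ (built from the bump, with $\sum_i|\nabla\varphi_i|$ bounded and each $\varphi_i$ Lipschitz with constant $\lesssim 1/r$), and set $F=\sum_i f(x_i)\varphi_i$. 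One checks $F\in C^{1,\alpha}\cap\Lip$, and the standard partition-of-unity estimate gives, for $\|x-y\|\le r$, $|F(x)-F(y)|\lesssim \big(\sup_i\sum_j|\varphi_j(x)-\varphi_j(y)|\big)\cdot\sup\{|f(x_i)-f(x_j)|: B(x_i,r)\cap B(x_j,r)\ni x \text{ or } y\}\lesssim (\|x-y\|/r)\cdot\omega(r)\,\varepsilon\lesssim \varepsilon\,\omega(\|x-y\|)$ using \eqref{eq:mod:db} and $t/\omega(t)$ increasing-type bounds from \eqref{eq:mod:coer1}; while for $\|x-y\|>r$ one bounds $|F(x)-F(y)|$ directly by comparing to $f(x)$, $f(y)$ on the relevant patches, again landing on $\lesssim\varepsilon\,\omega(\|x-y\|)$ after using doubling. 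Combining the two regimes gives $\|F-f\|_{\dot C^{0,\omega}}\lesssim\varepsilon$, and a further truncation argument (or directly, since $f\in\dot C^{0,\omega}$) shows $F$ can be taken with finite $\dot C^{0,\omega}$-norm.

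The main obstacle, and the place where real work is hidden, is the construction of the $C^{1,\alpha}$ Lipschitz partition of unity with \emph{uniform} control over scale $r$: one needs the functions $\varphi_i$ to be $C^{1,\alpha}$, to be $O(1/r)$-Lipschitz, to have $\sum_i\varphi_i\equiv 1$ with locally finite overlap whose multiplicity is bounded independently of $r$, and crucially to satisfy the differentiated estimate $\sum_j|\varphi_j(x)-\varphi_j(y)|\lesssim \|x-y\|/r$ with a constant independent of the point; in an infinite-dimensional, possibly non-separable $X$, building such a cover (rather than invoking paracompactness abstractly, which would not give the quantitative scale control) requires choosing a maximal $r/2$-separated set, forming the associated bump functions $b_i(x)=b(\|x-x_i\|/r)$ from the $C^{1,\alpha}$ Lipschitz bump $b$, and normalizing $\varphi_i=b_i/\sum_j b_j$ while checking that the denominator is bounded below and that the sum has controlled local finiteness — the finite-overlap multiplicity bound is where one must be careful, since in infinite dimensions it is not a dimensional constant but follows from the doubling-type packing of the separated set inside any ball of radius comparable to $r$. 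Once this quantitative $C^{1,\alpha}$ partition of unity is available, the rest is a routine variant of the arguments already used for Theorems \ref{thm:approx:bs} and \ref{thm:approx:C^k}.
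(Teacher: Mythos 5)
Your easy directions (``$\subseteq$'') and the reduction of (ii) to (i) via Theorem~\ref{thm:approx:bs} are fine and match the paper. The problem is in the core of the hard direction.

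The proposed partition-of-unity construction cannot work as stated in an infinite-dimensional $X$. You acknowledge the multiplicity bound as the ``place where real work is hidden'' and assert that it ``follows from the doubling-type packing of the separated set inside any ball of radius comparable to $r$.'' But in an infinite-dimensional normed space there is no such packing bound: balls are not totally bounded, and a maximal $r/2$-separated set can (and typically does) have infinitely many points inside a single ball of radius $r$ (e.g.\ in $\ell^2$ take $\{ (r/2)e_i \}_{i\in\N}$, which is $(r/\sqrt2)$-separated inside $B(0,r)$). Consequently $\sum_j b_j$ need not converge, the denominator need not be bounded below, and the estimate $\sum_j |\varphi_j(x)-\varphi_j(y)|\lesssim \|x-y\|/r$ with a uniform constant fails. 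This is not a technical point one can patch with more care; it is exactly the place where finite-dimensional compactness is essential, and no version of it survives.

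The paper takes a genuinely different route that sidesteps this obstruction. It factors through Lipschitz approximation (Lemma~\ref{lem:VCsLipApprox} for small scales, and Theorem~\ref{thm:approx:bs} plus Theorem~\ref{thm:main:LipApprox} for bounded support), then converts uniform-plus-Lipschitz-constant approximation into $\dot C^{0,\omega}$ approximation via Lemma~\ref{lem:Lip-Comeg:Approx}. The key new input is Theorem~\ref{thm:main:approxLipssuper-reflexive}: part (1) builds, for an \emph{arbitrary} set $S\subset X$, a globally $C^{1,\alpha}\cap\op{Lip}$ separating function with prescribed $1$-jet data on $S$ and its $1$-neighborhood complement, by invoking the $C^{1,\alpha}$ Whitney-type extension theorem of Azagra--Mudarra \cite{AzaMud21}; part (2) then glues a countable family of such separating bumps along the level sets $\{ \tilde g\geq n\}$ of a rescaled Lipschitz function, following H\'ajek--Johanis \cite[Proposition 1]{HajJoh10}. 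The sums there are locally finite automatically (in the function-value direction, not the spatial one) and the $\alpha$-H\"older estimate for the derivative is obtained by the chain structure of the level sets, never by a spatial multiplicity bound. So the extension theorem plus level-set gluing is precisely the mechanism that replaces the partition of unity you wanted and makes the argument go through in nonseparable, infinite-dimensional $X$.
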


In Remark \ref{rem:smoothnessL^psuper}, we will see that in Hilbert spaces and in $L^p$ spaces, with $p \geq 2,$ Theorem \ref{thm:intro:super-reflexive} yields $C^{1,1}$ approximations, i.e., $C^1$ functions with \textit{Lipschitz} first derivatives. 

Finally, for mappings $f:X \to Y,$ where $Y$ is an arbitrary Banach space, in Subsection \ref{subsect:C0GamApprox} we obtain $C^\infty$ smooth approximations in the case where $X$ is a space of the form $X=c_0(\mathcal{A}).$ 
\begin{theorem}\label{thm:intro:c0YApprox}
For an arbitrary set of indices $\mathcal{A},$ let $X=c_0(\mathcal{A}),$ and let $Y$ be a Banach space. Then, for any modulus $\omega$ satisfying  \eqref{eq:mod:coer0}, \eqref{eq:mod:coer1} and \eqref{eq:mod:db}, the following hold:
	\begin{itemize}
		\item[(i)] $\overline{C^{\infty}(X,Y)\cap \VC^{0,\omega}_{\op{small}}(X,Y)}^{\dot{C}^{0,\omega}(X,Y)}= \VC^{0,\omega}_{\op{small}}(X,Y),$
		\item[(ii)] $\overline{C^{\infty}_{\op{bs}}(X,Y)\cap \VC^{0,\omega}(X,Y)}^{\dot{C}^{0,\omega}(X,Y)}=\VC^{0,\omega}(X,Y).$
	\end{itemize}
\end{theorem}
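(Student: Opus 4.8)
The plan is to deduce Theorem \ref{thm:intro:c0YApprox} from Theorem \ref{thm:approx:bs} by producing, for any $f\in\VC^{0,\omega}_{\op{small}}(X,Y)$ (resp.\ $\VC^{0,\omega}(X,Y)$) with $X=c_0(\mathcal{A})$, a family of $C^\infty$ smoothings that converges to $f$ in the $\dot C^{0,\omega}$ seminorm while preserving membership in the relevant vanishing class. For part (ii), Theorem \ref{thm:approx:bs} already reduces us, up to an $\varepsilon$ in seminorm, to the case where $f\in\VC^{0,\omega}_{\op{small}}(X,Y)\cap C_{\op{bs}}(X,Y)$ — continuous, boundedly supported, and with vanishing oscillation on small scales — so the real work is: given such an $f$, approximate it by $C^\infty_{\op{bs}}$ maps into $Y$ in the $\dot C^{0,\omega}$ seminorm. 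For part (i) we skip the bounded-support step and work directly with $f\in\VC^{0,\omega}_{\op{small}}(X,Y)$, but now the smoothing procedure must be one that does not destroy the large/far-scale behaviour (here there is none to protect, since we are only in the small class, so uniform-in-scale control of the smoothing error on balls is enough).

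The key device is the well-known fact that $c_0(\mathcal{A})$ admits $C^\infty$ partitions of unity subordinate to any open cover, and more quantitatively, for each $t>0$ one can build a locally finite $C^\infty$ partition of unity $\{\psi_{t,i}\}_i$ with $\op{supp}\psi_{t,i}\subset B(x_{t,i},t)$ and with derivative bounds $\|\psi_{t,i}\|_{\Lip}\lesssim 1/t$ (this is exactly the structure used in To\-runczyk/H\'ajek--type smoothing on $c_0$; see the references in the monographs cited in the introduction). First I would set, for a partition as above and with $y_{t,i}:=f(x_{t,i})\in Y$,
\[
S_t f(x)=\sum_i \psi_{t,i}(x)\,y_{t,i},
\]
a $C^\infty$ map $X\to Y$ (locally a finite sum), with $\op{supp}S_tf$ contained in a bounded enlargement of $\op{supp}f$ in case (ii). The estimates to run are standard: for $\|x-x'\|=\delta$, using $\sum_i\psi_{t,i}\equiv1$ and $\sum_i(\psi_{t,i}(x)-\psi_{t,i}(x'))=0$, write $S_tf(x)-S_tf(x')=\sum_i(\psi_{t,i}(x)-\psi_{t,i}(x'))(y_{t,i}-f(x))$, and split according to whether $\delta\le t$ or $\delta>t$. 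In the regime $\delta\le t$ only the $\lesssim 1$ (uniformly bounded) many indices with $B(x_{t,i},t)$ meeting $[x,x']$ contribute, each $|y_{t,i}-f(x)|\le \|f\|_{\dot C^{0,\omega}}\,\omega(2t)\lesssim\omega(t)$, and the Lipschitz bound gives $|\psi_{t,i}(x)-\psi_{t,i}(x')|\lesssim \delta/t$; combined with doubling this yields $\osc^\omega_{(x,x')}(S_tf-f)\lesssim \osc^\omega_{\lesssim t}(f) + (\delta/t)\,\omega(t)/\omega(\delta)$, and the last term is $\lesssim \sup_{0<s\le t} s/\omega(s)\cdot\omega(t)/t\cdot(\text{stuff})$ — more simply, using \eqref{eq:mod:coer1} one shows it tends to $0$ as $t\to0$ uniformly in $\delta\le t$. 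In the regime $\delta>t$ one instead uses $|S_tf(x)-f(x)|\lesssim\osc^\omega_{\lesssim t}(f)\,\omega(t)$ pointwise (each $S_tf(x)$ is an average of values $f(x_{t,i})$ with $\|x-x_{t,i}\|<t$), so $\osc^\omega_{(x,x')}(S_tf-f)\lesssim \osc^\omega_{\lesssim t}(f)\,\omega(t)/\omega(\delta)\le \osc^\omega_{\lesssim t}(f)$ since $\omega$ is non-decreasing. Taking $t\to0$ and invoking $f\in\VC^{0,\omega}_{\op{small}}$ (so $\osc^\omega_{\le s}(f)\to0$) kills both regimes, giving $\|S_tf-f\|_{\dot C^{0,\omega}}\to0$.

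It remains to check that $S_tf$ lies in the right vanishing class so that the approximating sequence is genuinely inside $C^\infty\cap\VC^{0,\omega}_{\op{small}}$ (resp.\ $C^\infty_{\op{bs}}\cap\VC^{0,\omega}$). For small scales this is automatic: $S_tf$ is $C^\infty$, hence locally $C^1$, hence has vanishing oscillation on small scales on every bounded set, and since in case (ii) it is boundedly supported this gives $S_tf\in\VC^{0,\omega}_{\op{small}}$ globally; for case (i) one notes $\VC^{0,\omega}_{\op{small}}$ is closed in $\dot C^{0,\omega}$ (Remark \ref{remark:VCareclosedsubspaces}), so it suffices that each $S_tf\in\VC^{0,\omega}_{\op{small}}$, which again follows from local $C^1$-regularity together with the uniform bound $\|S_tf\|_{\dot C^{0,\omega}}\lesssim\|f\|_{\dot C^{0,\omega}}$ forcing the small-scale oscillation to vanish uniformly over all of $X$ (split again at scale $t$). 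For the far and large scales in case (ii), $S_tf$ has bounded support, and a boundedly supported $\dot C^{0,\omega}$ function automatically lies in $\VC^{0,\omega}_{\op{far}}\cap\VC^{0,\omega}_{\op{large}}$ (on large scales $\delta$ with $\omega(\delta)\to\infty$ the numerator stays bounded; on far scales both points eventually leave the support). Hence $S_tf\in\VC^{0,\omega}$, finishing (ii) after combining with the $\varepsilon$-reduction from Theorem \ref{thm:approx:bs} and a diagonal argument. The main obstacle I anticipate is the bookkeeping that makes the partition-of-unity derivative bounds $\|\psi_{t,i}\|_{\Lip}\lesssim 1/t$ and the uniform finite-overlap property genuinely available on $c_0(\mathcal{A})$ for arbitrary $\mathcal{A}$ — i.e., quoting or constructing the right $C^\infty$ partition of unity with scale-$t$ control — and, relatedly, making the $\delta\le t$ estimate clean enough that the error term coming from $\delta/t$ is absorbed using \eqref{eq:mod:coer1} uniformly in $\delta$; everything else is routine doubling-modulus manipulation.
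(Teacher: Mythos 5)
Your proposal takes a genuinely different route from the paper, and unfortunately the step you yourself flag as ``the main obstacle'' is a real gap, not just bookkeeping. You need, for each scale $t>0$, a locally finite $C^\infty$ partition of unity $\{\psi_{t,i}\}$ on $c_0(\mathcal{A})$ with $\supp\psi_{t,i}\subset B(x_{t,i},t)$, $\lip(\psi_{t,i})\lesssim 1/t$, \emph{and} a uniform overlap bound so that ``only $\lesssim 1$ indices contribute'' to $S_tf(x)-S_tf(x')$. The last property is of Besicovitch type and is false in $c_0(\mathcal{A})$: even the most natural candidate net, the finitely supported lattice $t\Z^{(\mathcal{A})}$, is not locally finite --- the points $te_\alpha$, $\alpha\in\mathcal{A}$, are all within distance $t$ of the origin --- so the cover you want to index your partition on does not exist at the scale you need, let alone with bounded multiplicity. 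Replacing bounded overlap by a bound on the gradient sum $\sum_i\|D\psi_{t,i}(x)\|\lesssim 1/t$ has the same problem: in any tensor-type construction this sum grows with the number of ``active'' coordinates and is not uniform over $x$. Tor\'unczyk's theorem does give $C^\infty$ partitions of unity on $c_0$, but without the scale-$t$ quantitative control your estimate requires, so quoting that result does not close the gap. A secondary issue is that in the regime $\delta\le t$ the error term you extract, $(\delta/t)\,\omega(t)/\omega(\delta)$, need not be bounded uniformly in $\delta\in(0,t]$ once the doubling constant $C_{\op{db}}$ exceeds $2$ (by doubling one only gets $(\delta/t)^{1-\log_2 C_{\op{db}}}$), so even granting the partition of unity the small-scale estimate would need the factor $\osc^\omega_{\le 2t}(f)$ inserted carefully to absorb it, and this does not simply ``tend to $0$ as $t\to0$ uniformly'' on its own.

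The paper sidesteps the partition-of-unity difficulty entirely by exploiting the specific structure of $c_0$. Lemma~\ref{lem:preliminaryforc0} composes $f$ with the coordinatewise shrink map $\phi((x_\alpha)_\alpha)=(\varphi(x_\alpha))_\alpha$, where $\varphi$ collapses $[-r,r]$ to $0$; this is $1$-Lipschitz, moves points by at most $r$ in sup-norm, and --- crucially, because of $c_0$ and the sup-norm --- makes $g=f\circ\phi$ depend, on each ball $B(x,r/2)$, only on the finitely many coordinates $\alpha$ with $|x_\alpha|\ge r/2$. One then mollifies in those finitely many directions (the Bochner integral $h_S$ in \eqref{eq:expressionc0mollification}, following H\'ajek--Johanis) and patches the local definitions via a Fubini consistency argument. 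This is a finite-dimensional mollification in disguise, so no infinite-dimensional covering or overlap bound is ever needed, and the $\dot{C}^{0,\omega}$ error estimate goes through exactly as in the Euclidean case. I would recommend that route; if you want to salvage the partition-of-unity scheme you would first need to establish a scale-$t$ smooth partition of unity on $c_0(\mathcal{A})$ with uniformly controlled gradient sums, which to my knowledge is not available and is plausibly false.
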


Theorem \ref{thm:intro:c0YApprox} will be restated and proved in Subsection \ref{subsect:C0GamApprox}, see Theorem \ref{thm:main:c0YApprox}. There, we will also show that the approximations can be taken to be Lipschitz in the case $Y=\R.$

	\subsection{Applications on the Euclidean space}\label{subsect:applicationsEucld} Above we presented results that guarantee approximations provided we know pointwise vanishing type conditions. When $X=\R^n,$ we will formulate these results in terms of mean oscillation type conditions, which are very convenient when studying commutators of Calderón-Zygmund operators (CZOs), for example.
	In the one parameter setting our results recover known components of the theory, but from a more general approach.
	In the bi-parameter context applications to the study of the compactness of commutators were recently presented in \cites{MarOik24}.   We describe all of this in full detail below, but let us begin by a motivation.
	
	For many non-degenerate CZOs $T$ on $\R^n,$ the symbol of the commutator $b\in L^1_{\loc}(\R^n,\C)$ and $p,q\in (1,\infty),$ there holds that 
	\begin{align}\label{eq:comm:bdd}
		\|[b,T]\|_{L^p(\R^n)\to L^q(\R^n)} \sim	\| b\|_{X^{p,q}(\R^n)},
	\end{align} 
	where the space $X^{p,q}(\R^n)$ is determined as follows. Given $p,q\in (1,\infty),$ let $\alpha(p,q)$ and $r(p,q)$ be defined through
	$\alpha(p,q)/n=1/p-1/q$ and $1/q=  1/r(p,q)+ 1/p.$
	Then, $X^{p,q}(\R^n)$ is the space of bounded mean oscillations $\BMO(\R^n),$ when $p=q;$ the (fractional BMO) space $\BMO^{\alpha(p,q)}(\R^n),$ when $p<q;$ and the Lebesgue space $L^{r(p,q)}(\R^n)$ modulo additive constants $\dot{L}^{r(p,q)}(\R^n),$ when $p>q.$ (The definitions of the spaces $\BMO,\BMO^{\alpha}$ can be read from Definition \ref{def:BMOX} below with the $``\mbox{moduli}"$ $ t \mapsto t^{0}$ and $t\mapsto t^{\alpha},$ respectively.)
	With emphasis on the recognition of the correct spaces $X^{p,q}(\R^n)$ to which the symbol belongs, the result \eqref{eq:comm:bdd} is due to Nehari \cite{Nehari1957} ($n=1$) and Coifman, Rochberg, Weiss \cite{CRW} ($n\geq 2$), when $q=p;$ due to Janson \cite{Jan1978}, when $q>p;$ and due to Hyt\"{o}nen \cite{HyLpLq}, when $q<p.$ We direct the reader to \cite{HyLpLq} for a further discussion and precise definitions.
	
	When  $\alpha(p,q)>1,$ the space $\dot C^{0,\alpha(p,q)}(\R^n)$ consists of constant functions and the commutator is bounded iff it is the zero operator (hence compact, in particular). Thus, only those $p,q$ that result in $\alpha(p,q)\leq 1$ are interesting to study. 
	When $\alpha(p,q) = 1$,  \cite[Theorems 1.7. \& 1.8.]{GHWY21} show that the commutator is compact iff it is the zero operator. Then, for $p,q\in (1,\infty)$ in the non-trivial range $\alpha(p,q)<1,$ and for many non-degenerate CZOs $T$,  there holds that 
	\begin{align}\label{eq:comp:approx}
		[b,T]\in \calK\left( L^{p}(\R^n), L^q(\R^n)\right) \Leftrightarrow b\in Y^{p,q}(\R^n) := \overline{C^{\infty}_c(\R^n)}^{X^{p,q}(\R^n)}.
	\end{align}
	The characterization  \eqref{eq:comp:approx} is due to Uchiyama \cite{Uch1978}, when $p=q$; due to Guo, He, Wu and Yang \cite{GHWY21}, when $q>p;$ and due to Hyt\"{o}nen, Li, Tao and Yang \cite{HLTY2023}, when $q<p.$  
	We recall that a result of Meyer's \cite{Mey1964} states that $\BMO^{\alpha(p,q)}(\R^n) = \dot{C}^{0,\alpha(p,q)}(\R^n),$ for $\alpha(p,q)\in (0,1).$  Thus, the following Theorem \ref{thm:main:Euc} provides in the case $\alpha(p,q)\in (0,1)$ a new characterization of the space $Y^{p,q}(\R^n).$
	\begin{theorem}\label{thm:main:Euc} 
		Let $Y$ be a Banach space and the modulus $\omega$ satisfy \eqref{eq:mod:coer0}, \eqref{eq:mod:coer1} and \eqref{eq:mod:db}.
		Then,
		\begin{align*}
			\VC^{0,\omega}(\R^n, Y)  =  \overline{C^{\infty}_c(\R^n, Y)}^{\dot C^{0,\omega}(\R^n, Y)}.
		\end{align*}
	\end{theorem}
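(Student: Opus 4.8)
The plan is to prove the two inclusions separately: the inclusion $\overline{C^\infty_c(\R^n,Y)}^{\dot C^{0,\omega}} \subseteq \VC^{0,\omega}(\R^n,Y)$ is a direct verification, while for the reverse inclusion I will combine Theorem \ref{thm:approx:bs} with a single mollification estimate carried out in the $\dot C^{0,\omega}$ seminorm, followed by a routine diagonal argument. No new structural input beyond Theorem \ref{thm:approx:bs} is needed.

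For the easy inclusion, recall from Remark \ref{remark:VCareclosedsubspaces} that $\VC^{0,\omega}(\R^n,Y)$ is closed in the $\dot C^{0,\omega}$ seminorm, so it suffices to check $C^\infty_c(\R^n,Y) \subseteq \VC^{0,\omega}(\R^n,Y)$. Any $\psi \in C^\infty_c(\R^n,Y)$ is bounded and Lipschitz, say $\|\psi\|_\infty \le M$, Lipschitz constant $L$, and $\supp \psi \subseteq B(0,R)$. Then $\osc^{\omega}_{\delta}(\psi) \le L\delta/\omega(\delta) \to 0$ as $\delta \to 0$ by \eqref{eq:mod:coer1}, so $\psi \in \dot{\op{V}}^{0,\omega}_{\op{small}}$; $\osc^{\omega}_{\delta}(\psi) \le 2M/\omega(\delta) \to 0$ as $\delta \to \infty$ by \eqref{eq:mod:coer0}, so $\psi \in \dot{\op{V}}^{0,\omega}_{\op{large}}$; and $\osc^{\omega}_{(x,y)}(\psi) = 0$ whenever $\min(\|x\|,\|y\|) > R$, so $\psi \in \dot{\op{V}}^{0,\omega}_{\op{far}}$. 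Hence $\psi \in \VC^{0,\omega}(\R^n,Y)$.

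For the reverse inclusion, Theorem \ref{thm:approx:bs} says every $f \in \VC^{0,\omega}(\R^n,Y)$ is a $\dot C^{0,\omega}$-limit of functions in $\VC^{0,\omega}_{\op{small}}(\R^n,Y) \cap C_{\op{bs}}(\R^n,Y)$, so by the triangle inequality it is enough to show that each such $g$ lies in $\overline{C^\infty_c(\R^n,Y)}^{\dot C^{0,\omega}}$. Fix such a $g$, a nonnegative mollifier $\phi \in C^\infty_c(\R^n)$ with $\int \phi = 1$ and $\supp \phi \subseteq B(0,1)$, set $\phi_\varepsilon(z) = \varepsilon^{-n}\phi(z/\varepsilon)$, and let $g_\varepsilon = g * \phi_\varepsilon$ be the Bochner convolution; this is well defined and $C^\infty$ since $Y$ is complete and $g$ is continuous with bounded support, and $\supp g_\varepsilon \subseteq \supp g + B(0,\varepsilon)$ is bounded, so $g_\varepsilon \in C^\infty_c(\R^n,Y)$. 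Put $\eta(\varepsilon) = \sup_{0 < t \le \varepsilon}\osc^{\omega}_{t}(g)$, which is finite because $g \in \dot C^{0,\omega}$ and satisfies $\eta(\varepsilon) \to 0$ as $\varepsilon \to 0$ because $g \in \dot{\op{V}}^{0,\omega}_{\op{small}}$. I claim $\|g_\varepsilon - g\|_{\dot C^{0,\omega}} \le 2\eta(\varepsilon)$. Indeed, for $x \neq y$, writing $\delta = \|x-y\|$,
\[
(g_\varepsilon - g)(x) - (g_\varepsilon - g)(y) = \int_{\R^n} \phi_\varepsilon(z)\big[\,(g(x-z) - g(y-z)) - (g(x) - g(y))\,\big]\dd z.
\]
When $\delta \le \varepsilon$, each of $g(x-z)-g(y-z)$ and $g(x)-g(y)$ has norm at most $\osc^{\omega}_{\delta}(g)\,\omega(\delta) \le \eta(\varepsilon)\omega(\delta)$, so the left side has norm at most $2\eta(\varepsilon)\omega(\delta)$; when $\delta \ge \varepsilon$ one instead estimates, for each $u$, $|(g_\varepsilon - g)(u)| \le \int \phi_\varepsilon(z)|g(u-z)-g(u)|\dd z \le \eta(\varepsilon)\omega(\varepsilon) \le \eta(\varepsilon)\omega(\delta)$, again giving the bound $2\eta(\varepsilon)\omega(\delta)$. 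Dividing by $\omega(\delta)$ and taking the supremum yields $\|g_\varepsilon - g\|_{\dot C^{0,\omega}} \le 2\eta(\varepsilon) \to 0$. Thus $g \in \overline{C^\infty_c(\R^n,Y)}^{\dot C^{0,\omega}}$, and approximating $f$ first by such $g$ and then $g$ by $g_\varepsilon$ finishes the proof.

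The only genuinely delicate point is the scale split in the convolution estimate: it is essential that $g$ has \emph{vanishing} small-scale oscillation (not merely $g \in \dot C^{0,\omega}$), since in the regime $\delta \ge \varepsilon$ this is what lets us control the full seminorm of $g_\varepsilon - g$ rather than just its sup norm. Everything else — well-definedness and smoothness of the Bochner convolution, the upgrade from $\lim_{t\to 0}\osc^{\omega}_{t}(g)=0$ to $\eta(\varepsilon) \to 0$, and the final diagonal argument — is routine, and the deeper content of the statement is already packaged in Theorem \ref{thm:approx:bs}.
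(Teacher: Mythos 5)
Your proof is correct and follows the same route as the paper: reduce to a boundedly supported $g \in \VC^{0,\omega}_{\op{small}}$ via Theorem \ref{thm:approx:bs}, mollify, and split the $\dot C^{0,\omega}$ estimate of $g_\varepsilon - g$ by comparing $\|x-y\|$ to the mollification width. Your bookkeeping is in fact slightly cleaner than the paper's: you use $\varepsilon$ itself as the split threshold and obtain a single quantitative bound $\|g_\varepsilon - g\|_{\dot C^{0,\omega}} \le 2\eta(\varepsilon)$ with $\eta(\varepsilon) = \sup_{0<t\le\varepsilon}\osc^\omega_t(g)$, whereas the paper first fixes a scale $\delta$ and then chooses the mollification radius $r = r(\delta)$ via uniform convergence of $g*\eta_r$ to $g$; both work and rely on the same structural ingredients.
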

	To obtain the implications $``\Rightarrow"$ in \eqref{eq:comp:approx} it is useful to have a description of $Y^{p,q}(\R^n),$ in the cases $q\geq p,$ in terms of vanishing mean oscillation (VMO) type criteria. Such VMO criteria (Definition \ref{def:VMOX}, below) usually follow immediately from the existence of approximations, but to establish approximations from VMO type criteria is delicate. Recall that in the previous section we established the existence of approximations beginning from pointwise vanishing criteria $\VC.$  Below, as Theorem \ref{thm:cubepw}, we provide the correspondence between these $\VMO$ and the $\VC$ type criteria, which allows us, when $X=\R^n,$ to reformulate all of our results, among them Theorem \ref{thm:main:Euc}, in terms of mean oscillation conditions.
	
	\subsection{Formulation as mean oscillation conditions}\label{section:pwOsc}
	\begin{dfn}\label{def:BMOX}  
		Let $\omega$ be a modulus and $Y$ a Banach space. Then, $\BMO^{\omega}(\R^n,Y)$ consists of those $f\in L^1_{\loc}(\R^n,Y)$ locally Bochner integrable functions for which 
		\[
		\Norm{f}{\BMO^{\omega}(\R^n)} = \sup_{Q} 	\calO^{\omega}(f;Q) < \infty,\qquad \calO^{\omega}(f;Q) =   \frac{1}{\omega(\ell(Q))}\fint_Q |f-\ave{f}_Q|_Y,
		\]
		where the supremum is taken over all cubes in $\R^n.$
	\end{dfn}
We direct the reader to Section \ref{sect:Bochner} below for a recap of Bochner integrability.
	\begin{dfn}\label{def:VMOX} 	Let $\omega$ be a modulus and $Y$ a Banach space and define the vanishing scales
		\begin{align*}
			\VMO^{\omega}_{\op{small}}(\R^n,Y) &= \Big\{  f\in \BMO^{\omega}(\R^n,Y) \, : \, \lim_{\delta \to 0}  \sup_{\substack{ \ell(Q) = \delta}} 	\calO^{\omega}(f;Q) = 0  \Big\}, \\
			\VMO^{\omega}_{\op{large}}(\R^n,Y) &= \Big\{ f\in \BMO^{\omega}(\R^n,Y) \, : \, \lim_{\delta \to \infty}  \sup_{\substack{\ell(Q) = \delta}} 	\calO^{\omega}(f;Q) = 0 \Big\}, \\
			\VMO^{\omega}_{\op{far}}(\R^n,Y) &= \Big\{  f\in \BMO^{\omega}(\R^n,Y) \, : \, \lim_{\delta \to \infty}  \sup_{\substack{ \dist(Q,0) >\delta } }	\calO^{\omega}(f;Q) = 0 \Big\}, 
		\end{align*}
		where the suprema are taken over all cubes. Then, define 
		\begin{align*}
			\VMO^{\omega}(\R^n,Y) &= \VMO^{\omega}_{\op{small}}(\R^n,Y) \cap \VMO^{\omega}_{\op{far}}(\R^n,Y) \cap \VMO^{\omega}_{\op{large}}(\R^n,Y). 
		\end{align*}
	\end{dfn}

	The approximation $\VMO(\R^n) =  \overline{C^{\infty}_c(\R^n)}^{\BMO(\R^n)}$ was already present in the work of Uchiyama \cite{Uch1978}.  Uchiyama's construction relies on covering any bounded neighbourhood of the origin by disjoint cubes of equal shared side length $l>0$. The key property used is that there exists some $M>0$ such that given any cube of side length $\sim l$, it intersects at most $M$ many of these cubes.  Uchiyama's construction does not easily translate to the case $\alpha\in (0,1)$ and in \cite{GHWY21} an approximation for $\VMO^{\alpha}(\R^n) =  \overline{C^{\infty}_c(\R^n)}^{\BMO^{\alpha}(\R^n)}$ was provided that relies on the same covering property that Uchiyama's original argument did.  Clearly such a property fails, when $X$ is an arbitrary normed space.
Therefore, one more fact of Theorem \ref{thm:main:Euc}, in particular of its proof, is that it provides a conceptually easier approximation that uses only basic mollification along with a very carefully chosen truncation of the support. To make the connection with the mean oscillations and the pointwise conditions, we assume the following summability condition
	\begin{align}\label{eq:mod:dini}
		[\omega]_{*} :=\sup_{s>0}\frac{1}{\omega(s)}\int_0^{s}\omega(t)\frac{\ud t}{t}< \infty.
	\end{align}
	\begin{theorem}\label{thm:cubepw} Let $Y$ be a Banach space and the modulus $\omega$ satisfy \eqref{eq:mod:db} and \eqref{eq:mod:dini}.
		Then, 
		\begin{align}\label{eq:meyer}
			\Norm{\cdot }{\dot C^{0,\omega}(\R^n,Y)}  \lesssim_{n} [\omega]_{*} \Norm{\cdot}{\BMO^\omega(\R^n,Y)},
			\qquad  \Norm{\cdot}{\BMO^\omega(\R^n,Y)} \lesssim_{n,C_{\op{db}}}  \Norm{\cdot}{\dot C^{0,\omega}(\R^n,Y)}.
		\end{align}
		Moreover, if in addition  $\omega(\infty) = \infty$, then for each scale $\Gamma\in \{\op{small}, \op{far},\op{large}\},$ we have 
		\begin{align}\label{eq:postM1}
			\VC^{0,\omega}_{\Gamma}(\R^n,Y) = \VMO^{\omega}_{\Gamma}(\R^n,Y),\qquad \VC^{0,\omega}(\R^n,Y) = \VMO^{\omega}(\R^n,Y).
		\end{align} 
	\end{theorem}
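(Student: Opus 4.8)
The plan is to establish the two norm comparisons in \eqref{eq:meyer} first, since the scale-by-scale identifications in \eqref{eq:postM1} will follow by quantifying those same estimates. For the inequality $\Norm{\cdot}{\BMO^\omega} \lesssim [\omega]_*\Norm{\cdot}{\dot C^{0,\omega}}$, I would proceed directly: given $f\in\dot C^{0,\omega}$ and a cube $Q$ with side length $\ell=\ell(Q)$, bound $\fint_Q|f-\ave f_Q|_Y \le \fint_Q\fint_Q|f(x)-f(y)|_Y\,dx\,dy \le \Norm f{\dot C^{0,\omega}}\,\omega(\op{diam}Q)$, and then use doubling \eqref{eq:mod:db} to replace $\omega(\op{diam}Q)=\omega(\sqrt n\,\ell)$ by a constant (depending on $n,C_{\op{db}}$) times $\omega(\ell)$; here one does not even need $[\omega]_*$, so this direction is the easy half. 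For the reverse inequality one uses the standard telescoping/Campanato-type argument: to estimate $|f(x)-f(y)|_Y$ for $\|x-y\|=\delta$, pick a chain of cubes $Q_0\supset\cdots$ and $Q_0'\supset\cdots$ shrinking to $x$ and $y$ respectively, all comparable to $Q_0$ of side $\sim\delta$, compare consecutive averages via $|\ave f{Q_{k+1}}-\ave f{Q_k}|_Y \lesssim \calO^\omega(f;Q_k)\omega(\ell(Q_k))$, and sum the resulting geometric-type series $\sum_k \omega(2^{-k}\delta)$; the Dini-type bound \eqref{eq:mod:dini} is exactly what makes $\sum_{k\ge 0}\omega(2^{-k}\delta)\lesssim [\omega]_*\,\omega(\delta)$ (comparing the sum to $\int_0^\delta\omega(t)\,dt/t$ via doubling), and one must also handle Lebesgue points to pass from averages back to pointwise values of $f$. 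This telescoping estimate is the technical heart of the norm comparison.

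For the scale identifications \eqref{eq:postM1}, the point is that the constants in \eqref{eq:meyer} are absolute (only $n,C_{\op{db}},[\omega]_*$), so the arguments localize to a given scale. For the $\op{small}$ scale: if $f\in\VMO^\omega_{\op{small}}$, run the telescoping argument but start the chain at $Q_0$ of side $\sim\delta$ with $\delta$ small; every cube appearing has side $\le C\delta$, so its oscillation is at most $\sup_{\ell(Q)\le C\delta}\calO^\omega(f;Q)$, which $\to 0$ as $\delta\to0$ — hence $\osc^\omega_\delta(f)\to0$ and $f\in\VC^{0,\omega}_{\op{small}}$. Conversely, if $f\in\VC^{0,\omega}_{\op{small}}$, bound $\calO^\omega(f;Q)$ for $\ell(Q)$ small by the double-average estimate above, noting all pairs $x,y\in Q$ have $\|x-y\|\le\sqrt n\,\ell(Q)$, so $\calO^\omega(f;Q)\lesssim \sup_{\eta\le\sqrt n\,\ell(Q)}\osc^\omega_\eta(f)\to0$. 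For the $\op{large}$ scale the same two arguments apply with $\delta\to\infty$; here the hypothesis $\omega(\infty)=\infty$ (i.e. \eqref{eq:mod:coer0}) is what guarantees $\omega(t)\to\infty$, so that in the telescoping one can split the chain into a "coarse" part of boundedly many steps with cubes of side $\ge$ some large threshold (oscillations small by the $\op{large}$-VMO hypothesis) plus a "fine" tail whose contribution is $\lesssim [\omega]_*\,\omega(1)$, which is negligible compared to $\omega(\delta)\to\infty$ — this splitting, ensuring only large-scale cubes carry the main term, is the one genuinely new wrinkle over the classical $\alpha$-fixed case.

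For the $\op{far}$ scale I would argue similarly: if $f\in\VMO^\omega_{\op{far}}$ and $\min(\|x\|,\|y\|)>\delta$ with $\|x-y\|=\eta$, distinguish $\eta\le\delta/2$ (then the whole telescoping chain between $x$ and $y$ lives in cubes at distance $\gtrsim\delta$ from the origin, so all oscillations are controlled by $\sup_{\op{dist}(Q,0)>c\delta}\calO^\omega(f;Q)\to0$) from $\eta>\delta/2$ (then $\eta$ is itself large, and one invokes the already-established $\op{large}$ statement, plus the uniform bound, to absorb this case — alternatively combine with $\VMO^\omega_{\op{large}}$, which is legitimate since the claim for the full intersection $\VC^{0,\omega}=\VMO^\omega$ follows by intersecting the three). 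The converse for $\op{far}$ is again the double-average bound: for a cube $Q$ with $\op{dist}(Q,0)$ large, every $x\in Q$ has $\|x\|\ge\op{dist}(Q,0)$, so $\calO^\omega(f;Q)\lesssim\sup_{\min(\|x\|,\|y\|)>\op{dist}(Q,0)}\osc^\omega_{(x,y)}(f)\to0$. Finally, the last identity in \eqref{eq:postM1} is immediate by intersecting the three scale-wise identities. The main obstacle throughout is the telescoping estimate and, specifically, organizing it so that at each scale $\Gamma$ only cubes "of type $\Gamma$" contribute the dominant term while the Dini condition \eqref{eq:mod:dini} disposes of the remainder; once that bookkeeping is set up cleanly, everything else is routine.
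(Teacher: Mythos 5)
Your strategy matches the paper's for the norm equivalences in \eqref{eq:meyer} and for the $\op{small}$ scale: the one-line double-average bound one way, the dyadic telescoping (with the Dini sum $\sum_{k}\omega(2^{-k}\delta)\lesssim[\omega]_*\omega(\delta)$, Lebesgue points, etc.) the other way. The $\op{far}$-$\subset$ direction and the $\op{large}$-$\supset$ direction (coarse/fine split of the telescoping, with $\omega(\infty)=\infty$ killing the fine tail) are also correct and are exactly what the paper does. However, two steps of your proposal have gaps.

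The serious one is $\VMO^{\omega}_{\op{far}}\subset\VC^{0,\omega}_{\op{far}}$ when $\eta=\|x-y\|>\delta/2$. There you "invoke the already-established $\op{large}$ statement," but you only know $f\in\VMO^{\omega}_{\op{far}}$, not $f\in\VMO^{\omega}_{\op{large}}$, so the $\op{large}$ statement gives nothing; your fallback (intersect with $\VMO^{\omega}_{\op{large}}$) proves only the identity for the triple intersection $\VC^{0,\omega}=\VMO^{\omega}$, whereas \eqref{eq:postM1} claims the per-scale equality $\VC^{0,\omega}_{\op{far}}=\VMO^{\omega}_{\op{far}}$ on its own. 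What is missing is a geometric idea, which the paper uses: when $|x-y|$ is large compared to $\min(\|x\|,\|y\|)$, the single base cube $Q_0\ni x,y$ may straddle the origin, so instead one connects $x$ to $y$ by a chain of at most $N(n)$ intermediate points $z_0=x,z_1,\dots,z_N=y$ and cubes $Q_k\ni z_k,z_{k+1}$ with $\ell(Q_k)=|z_k-z_{k+1}|\lesssim|x-y|$ and $\dist(Q_k,0)\geq R$, i.e. one routes the chain around $B(0,R)$. Then the telescoping estimate applied inside each $Q_k$ uses only far cubes, and summing $N$ segments costs only a dimensional constant. Without this routing your argument does not close.

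The smaller gap is in $\VC^{0,\omega}_{\op{large}}\subset\VMO^{\omega}_{\op{large}}$, where you say "the same two arguments apply" and identify the coarse/fine telescoping split as "the one genuinely new wrinkle." But the $\subset$ direction also needs a new wrinkle: for a large cube $Q$, the naive double-average bound $\calO^\omega(f;Q)\lesssim\sup_{\eta\le\sqrt n\,\ell(Q)}\osc^\omega_\eta(f)$ only gives the uniform constant $\|f\|_{\dot C^{0,\omega}}$, because $Q$ contains many nearby pairs. One must split $\fint_Q\fint_Q|f(x)-f(y)|$ over $\{|x-y|\ge R\}$ (controlled by $\sup_{\delta\ge R}\osc^\omega_\delta(f)$) and $\{|x-y|<R\}$ (contributing at most $\lesssim\|f\|_{\dot C^{0,\omega}}(R/\ell(Q))^n\omega(R)$, negligible against $\omega(\ell(Q))\to\infty$). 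The paper does exactly this; your sketch elides it.
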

	The proof idea of Theorem  \ref{thm:cubepw} goes back to Meyers \cite{Mey1964}, who obtained a version of the bounds \eqref{eq:meyer} with the modulus $t^{\alpha},$ for $\alpha>0.$ 
	The condition \eqref{eq:mod:dini} appears new, but we recognize that it is a natural summability condition, after all. The identification \eqref{eq:postM1} is new.
	Combining Theorems \ref{thm:main:Euc} and \ref{thm:cubepw}, we immediately obtain the following.
	
	\begin{theorem}\label{thm:main:Euc3} Let $Y$ be a Banach space and the modulus $\omega$ satisfy \eqref{eq:mod:coer0}, \eqref{eq:mod:coer1},  \eqref{eq:mod:db} and \eqref{eq:mod:dini}.
		Then, 
		\begin{align*}
			\VMO^{\omega}(\R^n,Y)  =  \overline{C^{\infty}_c(\R^n,Y)}^{\BMO^{\omega}(\R^n,Y)}.
		\end{align*}
	\end{theorem}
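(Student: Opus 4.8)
The plan is to combine the two results that have already been stated, Theorem~\ref{thm:main:Euc} and Theorem~\ref{thm:cubepw}, the point being that the hypotheses \eqref{eq:mod:coer0}, \eqref{eq:mod:coer1}, \eqref{eq:mod:db} together with the extra Dini-type condition \eqref{eq:mod:dini} are precisely what is needed to run both machines at once and then transfer the conclusion from the $\dot C^{0,\omega}$ seminorm to the $\BMO^{\omega}$ norm. First I would observe that the standing assumptions of Theorem~\ref{thm:main:Euc3} imply all hypotheses of Theorem~\ref{thm:main:Euc} (that is immediate, since \eqref{eq:mod:coer0}--\eqref{eq:mod:db} are assumed verbatim) and all hypotheses of Theorem~\ref{thm:cubepw} as well: \eqref{eq:mod:db} and \eqref{eq:mod:dini} are assumed, and from \eqref{eq:mod:coer0} we get in particular $\omega(\infty)=\infty$, so the ``moreover'' clause of Theorem~\ref{thm:cubepw} applies and gives $\VC^{0,\omega}(\R^n,Y)=\VMO^{\omega}(\R^n,Y)$ as a set-level identity.

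Next I would record the norm equivalence \eqref{eq:meyer}, which says
\[
\Norm{\cdot}{\dot C^{0,\omega}(\R^n,Y)}\sim_{n,C_{\op{db}},[\omega]_*}\Norm{\cdot}{\BMO^{\omega}(\R^n,Y)}
\]
on the common underlying vector space (modulo constants). Since the two norms are equivalent, the closure of any subset is the same whichever of the two norms one uses; in particular
\[
\overline{C^{\infty}_c(\R^n,Y)}^{\dot C^{0,\omega}(\R^n,Y)}=\overline{C^{\infty}_c(\R^n,Y)}^{\BMO^{\omega}(\R^n,Y)}
\]
as subsets of the ambient space. Here one should be a little careful that $C^\infty_c$ functions genuinely lie in both spaces — they do, being Lipschitz with bounded support and hence in $\dot C^{0,\omega}$ by \eqref{eq:mod:coer1}, and locally Bochner integrable with controlled mean oscillation — and that the equivalence of norms is being used on the closed linear span, which is legitimate because \eqref{eq:meyer} is a two-sided inequality valid for every element.

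Finally I would chain the three identities: by Theorem~\ref{thm:main:Euc}, the left-hand side $\VC^{0,\omega}(\R^n,Y)$ equals $\overline{C^{\infty}_c(\R^n,Y)}^{\dot C^{0,\omega}(\R^n,Y)}$; by the norm equivalence this equals $\overline{C^{\infty}_c(\R^n,Y)}^{\BMO^{\omega}(\R^n,Y)}$; and by Theorem~\ref{thm:cubepw} the left-hand side $\VC^{0,\omega}(\R^n,Y)$ equals $\VMO^{\omega}(\R^n,Y)$. Putting these together yields $\VMO^{\omega}(\R^n,Y)=\overline{C^{\infty}_c(\R^n,Y)}^{\BMO^{\omega}(\R^n,Y)}$, which is the claim. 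There is essentially no hard analytic content here — the proof is a two-line corollary — so the only ``obstacle'' is bookkeeping: one must verify that the hypotheses of the present theorem really do subsume those of both cited theorems (in particular that $\omega(\infty)=\infty$ is available so that the set-theoretic identification \eqref{eq:postM1} is licensed), and that the norm-equivalence \eqref{eq:meyer} is being invoked on precisely the right space of equivalence classes modulo additive constants so that taking closures commutes with the change of norm.
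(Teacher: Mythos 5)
Your proposal is correct and matches the paper exactly: the paper states Theorem~\ref{thm:main:Euc3} as an immediate consequence of combining Theorems~\ref{thm:main:Euc} and~\ref{thm:cubepw}, which is precisely the chain of identifications you lay out. The bookkeeping you flag (that \eqref{eq:mod:coer0} supplies $\omega(\infty)=\infty$ so that \eqref{eq:postM1} applies, and that the norm equivalence \eqref{eq:meyer} lets one transfer the closure from the $\dot C^{0,\omega}$ seminorm to the $\BMO^{\omega}$ norm) is exactly the content being used.
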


In the following table we gather the different background assumptions on modulus used in the theorems above. 
		\begin{table}[h]		 
				\caption{Conditions on the modulus in Theorems \ref{thm:approx:bs} through \ref{thm:main:Euc3}}
		\begin{tabular}{ c | c | c | c | c  | c | c | c  }\label{eq:table}
			\text{Theorem}     &   \ref{thm:approx:bs}                & \ref{thm:approx:C^k} & \ref{thm:intro:super-reflexive} &  \ref{thm:intro:c0YApprox} &  \ref{thm:main:Euc} &  \ref{thm:cubepw}    & \ref{thm:main:Euc3}           \\
			\hline 
			$\omega(0^+) = 0$     & x&            x &           x    &           x&  x&           - &          x      \\ 
			$\omega(\infty) = \infty$                        & x&           x &           x    &           x & x&          x &           x      \\ 
			$\omega$\text{ is doubling}              & x&           x &           x    &           x & x &          x&          x      \\ 
			$\lim_{t \to 0} t/\omega(t)=0$       & x&           x &           x   &           x & x&           -&          x     \\ 
				$[\omega]_* < \infty$            & -&           - &          -    &           - & -&           x &           x   
		\end{tabular}
		 
		\label{table}
	\end{table}

\subsubsection{Bochner integrability}\label{sect:Bochner}
In the proof of Theorem \ref{thm:main:Euc}, as well as in Theorem \ref{thm:main:c0YApprox}, we will make use of the Bochner integral to build up smooth approximations. We refer to \cite[pp. 99--101]{BenyaminiLindenstrauss} for an exposition on the integrability of Banach-valued maps, and to \cite[pp. 39--40]{HajJoh14} for Differentiation Under the Integral Sign results for the Bochner integral. If $\Omega \subset \R^n$ is Lebesgue measurable, a simple function $s:  \Omega \to Y$ is a function of the form $s = \sum_{i=1}^N y_i \cdot \mathcal{X}_{E_i},$ for measurable subsets $E_1,\ldots, E_N \subset \Omega$, and vectors $y_1,\ldots, y_N \in Y.$ Then, we say that a function $f: \Omega \to Y$ is measurable if it is the a.e. pointwise limit of a sequence $\lbrace s_n \rbrace_n$ of simple funtions in $\Omega.$ Denoting by $\lambda$ the Lebesgue measure in $\R^n$, a simple function $s$ as above is Bochner-integrable in $\Omega$ whenever $\lambda(E_i) < \infty$ for every $i,$ and the Bochner-integral of $s$ is defined by $\int_\Omega s(x) \ud x = \sum_{i=1}^N y_i \lambda(E_i).$ And a measurable function $f: \Omega \to Y$ is Bochner-integrable if there exists a sequence of simple functions $\lbrace s_n \rbrace_n$ converging a.e. to $f$ in $\Omega$ so that
$$
\lim_{n,m \to \infty} \int_\Omega \|f_n(x)-f_m(x)\|_Y \ud x=0.
$$
In such case, the Bochner integral of $f$ is defined by 
$$
\int_\Omega f = \lim_n \int_\Omega s_n.
$$
Lastly, a function $f$ is said to be locally Bochner interable, provided that $ f$ is Bochner integrable on each compact set $E\subset\mathbb{R}^n.$
	
	\subsection{Applications to the compactness of the bi-commutator}\label{sect:appli:bicom}
Theorems \ref{thm:main:Euc}, \ref{thm:cubepw} and \ref{thm:main:Euc3} were partly motivated by applications to bi-parameter harmonic analysis that followed in the subsequent work of the second named author \cite{MarOik24}. We turn to describing the details. 
We consider $\R^n = \R^{n_1}\times\R^{n_2}$ as a bi-parameter space, take CZOs $T_i$ on $\R^{n_i}$ and $b\in L^1_{\loc}(\R^n,\C)$, where $T_i$ acts on functions on $\mathbb{R}^n$ by acting only on the variable $x_i.$  Then, the bi-commutator $[T_2,[b,T_1]]$ is obtained by commuting the symbol first with $T_1$ and then with $T_2.$ 

The product BMO upper bound for the $L^p$-to-$L^p$ boundedness of the bi-commutator was obtained with the Hilbert transforms in Ferguson, Sadosky \cite{FerSad} and in full generality later by Dalenc, Ou \cite{DaO16}. 
The proof of the corresponding lower bound in terms of the product $\BMO$ 
for specific singular integrals (Hilbert/Riesz transforms)
has been reported to have a gap,
see e.g. \cites{Ferguson2002, HTV2021ce, AHLMO2021}, and a fix has not yet been published.  The existence of this gap adds considerable interest to all natural questions involving the bi-commutator.

Instead of looking at $L^p$-to-$L^p$ bounds, the product stucture of the bi-commutator makes it natural to consider mixed $L^{p_1}(L^{p_2})$-to-$L^{q_1}(L^{q_2})$ bounds, where we allow $q_i\not=p_i.$ This was done in \cite{AHLMO2021}. The exponents directly connecting with our work are those $p_i,q_i\in (1,\infty)$ for which $\beta_i := n_i(1/p_i-1/q_i)\geq 0$ and moreover the inequality $\beta_i > 0$ is strict for some $i=1,2,$ which we start to assume here and below.
The function space to characterize the  $L^{p_1}(L^{p_2})$-to-$L^{q_1}(L^{q_2})$ bounds turned out to be
\begin{multline*}
	\| b\|_{\op{BMO}^{\beta_1,\beta_2}(\R^n)}  \\
	= \sup_{R=I_1\times I_2}\ell(I_1)^{-\beta_1}\fint_{I_1}\ell(I_2)^{-\beta_2}\fint_{I_2} |b(x_1,x_2) - \ave{b(x_1,\cdot)}_{I_2} - \ave{b(\cdot,x_2)}_{I_1} + \ave{b}_{I_1\times I_2}|\ud x_2\ud x_1.
\end{multline*}
The space $\op{BMO}^{\beta_1,\beta_2}(\R^n)$ is in fact a $\BMO$-valued or a $\dot C^{0,\alpha}$-valued H\"older space, see \cite[Section 3]{AHLMO2021}.
Then, recently in \cite{MarOik24}, the corresponding $L^{p_1}(L^{p_2})$-to-$L^{q_1}(L^{q_2})$ compactness  was characterized through the VMO analogue $\op{VMO}^{\beta_1,\beta_2}(\R^n).$ Interestingly, such mixed norm compactness results can be used to prove the sufficiency of the natural product VMO condition for compactness in the non-mixed $L^p\to L^p$ case. That proof rests on an adaptation of an extrapolation scheme for commutators from \cite{Oik25}  and the following Theorem \ref{thm:MarOik24}, which is a corollary of the approximations developed in this article.
\begin{theorem}\label{thm:MarOik24} Let $\beta_i\in[0,1)$ and $\beta_i>0$ for some $i=1,2.$ Then,
	$$
	\op{VMO}^{\beta_1,\beta_2}(\R^n) = \overline{C^{\infty}_c(\R^n)}^{\op{BMO}^{\beta_1,\beta_2}(\R^n)}.
	$$
\end{theorem}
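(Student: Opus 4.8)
The plan is to deduce Theorem~\ref{thm:MarOik24} from the one-parameter, Banach-valued Theorem~\ref{thm:main:Euc} (equivalently Theorem~\ref{thm:main:Euc3}), by regarding $\op{BMO}^{\beta_1,\beta_2}(\R^n)$ as a H\"older space in the variable $x_1$ with values in a Banach space of functions of $x_2$. Relabelling the two factors of $\R^n=\R^{n_1}\times\R^{n_2}$ if necessary, we may assume $\beta_1>0$, so that $\omega_1(t):=t^{\beta_1}$ is a modulus satisfying \eqref{eq:mod:coer0}, \eqref{eq:mod:coer1}, \eqref{eq:mod:db}, \eqref{eq:mod:dini} and $\omega_1(\infty)=\infty$. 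As recorded in \cite[Section~3]{AHLMO2021}, the correspondence $b\mapsto\big(x_1\mapsto b(x_1,\cdot)\big)$ identifies, with equivalent norms,
\[
\op{BMO}^{\beta_1,\beta_2}(\R^n)\;\cong\;\dot C^{0,\omega_1}\!\big(\R^{n_1},Y\big),\qquad Y=\begin{cases}\BMO(\R^{n_2}),&\beta_2=0,\\[2pt]\BMO^{\beta_2}(\R^{n_2})=\dot C^{0,\beta_2}(\R^{n_2}),&\beta_2>0,\end{cases}
\]
the last identity for $\beta_2>0$ being Meyers' \cite{Mey1964} (a special case of Theorem~\ref{thm:cubepw}); in either case $Y$ is a Banach space. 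Under the same correspondence the vanishing requirements on rectangles $R=I_1\times I_2$ defining $\op{VMO}^{\beta_1,\beta_2}(\R^n)$ in \cite{MarOik24} split into vanishing of the $x_1$-oscillations on small, large and far scales together with vanishing, uniformly in $x_1$, of the $x_2$-oscillations on small, large and far scales, i.e.
\[
\op{VMO}^{\beta_1,\beta_2}(\R^n)\;\cong\;\VC^{0,\omega_1}\!\big(\R^{n_1},Z\big)=\VMO^{\omega_1}\!\big(\R^{n_1},Z\big),\qquad Z:=\VMO^{\omega_2}(\R^{n_2}),\quad\omega_2(t)=t^{\beta_2},
\]
with $Z=\VMO(\R^{n_2})$ when $\beta_2=0$. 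Here $Z$ is a closed, hence Banach, subspace of $Y$ (Remark~\ref{remark:VCareclosedsubspaces}), and, crucially, $Z=\overline{C^\infty_c(\R^{n_2})}^{\,Y}$, by Theorem~\ref{thm:main:Euc} together with Theorem~\ref{thm:cubepw} when $\beta_2>0$, and by Uchiyama \cite{Uch1978} when $\beta_2=0$.

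Applying Theorem~\ref{thm:main:Euc} with source space $\R^{n_1}$ and target the Banach space $Z$, and transferring through the norm equivalences above, yields
\[
\op{VMO}^{\beta_1,\beta_2}(\R^n)\;\cong\;\overline{C^\infty_c(\R^{n_1},Z)}^{\,\dot C^{0,\omega_1}(\R^{n_1},Z)}\;\cong\;\overline{C^\infty_c(\R^{n_1},Z)}^{\,\op{BMO}^{\beta_1,\beta_2}(\R^n)}.
\]
It then remains to prove $\overline{C^\infty_c(\R^{n_1},Z)}^{\,\op{BMO}^{\beta_1,\beta_2}}=\overline{C^\infty_c(\R^n)}^{\,\op{BMO}^{\beta_1,\beta_2}}$; the inclusion $\supseteq$ is clear since $C^\infty_c(\R^n)\supseteq C^\infty_c(\R^{n_1})\otimes C^\infty_c(\R^{n_2})\subseteq C^\infty_c(\R^{n_1},Z)$. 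For $\subseteq$, fix $g\in C^\infty_c(\R^{n_1},Z)$ and $\vare>0$. First, mollifying $g$ in $x_1$ and replacing the resulting $Z$-valued Bochner integral by a Riemann sum produces a \emph{finite} sum $h=\sum_{j=1}^M\phi_j\otimes z_j$, with $\phi_j\in C^\infty_c(\R^{n_1})$ and $z_j\in Z$, that approximates $g$ in $C^1(\R^{n_1},Z)$; since $g-h$ is $C^1$ with bounded derivative, splitting the H\"older quotient according to whether $|x_1-x_1'|\le 1$ or $>1$ (using $\beta_1\in(0,1)$) gives $\|g-h\|_{\dot C^{0,\omega_1}(\R^{n_1},Z)}\le 2\|g-h\|_{C^1(\R^{n_1},Z)}$, so $h$ may be taken $\dot C^{0,\omega_1}$-close to $g$. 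Next, using $Z=\overline{C^\infty_c(\R^{n_2})}^{\,Y}$, choose $v_j\in C^\infty_c(\R^{n_2})$ with $\|z_j-v_j\|_Y$ arbitrarily small and set $h'=\sum_{j=1}^M\phi_j\otimes v_j\in C^\infty_c(\R^n)$. Then
\[
\|h-h'\|_{\dot C^{0,\omega_1}(\R^{n_1},Y)}=\Big\|\sum_{j=1}^M\phi_j\otimes(z_j-v_j)\Big\|_{\dot C^{0,\omega_1}(\R^{n_1},Y)}\le\Big(\sum_{j=1}^M\|\phi_j\|_{\dot C^{0,\omega_1}(\R^{n_1})}\Big)\max_{1\le j\le M}\|z_j-v_j\|_Y,
\]
which is small because the finitely many $\phi_j$ are now fixed. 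Combining the two steps and using the norm equivalence, $\|g-h'\|_{\op{BMO}^{\beta_1,\beta_2}}<\vare$ with $h'\in C^\infty_c(\R^n)$, which proves $\subseteq$ and hence the theorem.

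The main obstacle is the first paragraph: identifying $\op{VMO}^{\beta_1,\beta_2}$ with the vector-valued $\VC^{0,\omega_1}(\R^{n_1},Z)$ requires checking that the rectangular vanishing conditions genuinely decouple — in particular that the far condition for rectangles is equivalent to the far conditions in each factor — and that the realization of $\op{BMO}^{\beta_1,\beta_2}$ as a $\BMO$- or $\dot C^{0,\beta_2}$-valued H\"older space is an \emph{equivalence of norms}; both are the content of \cite[Section~3]{AHLMO2021} and of the VMO side of \cite{MarOik24}. Once the statement is cast in the one-parameter Banach-valued framework, Theorem~\ref{thm:main:Euc} does the analytic work in $x_1$, and the passage from $C^\infty_c(\R^{n_1},Z)$-valued approximants to honest jointly smooth, compactly supported scalar functions is soft — provided one first approximates by a \emph{finite} $C^\infty_c(\R^{n_1})\otimes Z$ combination, so that only finitely many fixed $\phi_j$ occur when the $Z$-coefficients are replaced, rather than attempting a simultaneous mollification and truncation in the variable $x_2$.
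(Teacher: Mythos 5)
The paper does \emph{not} prove Theorem~\ref{thm:MarOik24} directly; it explicitly says that the proof ``is split between the current article and \cite{MarOik24}'', and that the current article's contribution is the quantitative Corollary~\ref{thm:quant}, with the actual bi-parameter argument carried out in \cite[Section~3]{MarOik24}. So there is no in-paper proof to compare against, and your proposal should be judged on its own merits.

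The outline you give (view $\op{BMO}^{\beta_1,\beta_2}$ as $\dot C^{0,\omega_1}(\R^{n_1},Y)$, apply the vector-valued Theorem~\ref{thm:main:Euc}, then replace $Z$-valued approximants by jointly smooth, compactly supported ones via a finite-rank Riemann sum and a coefficientwise replacement) is appealing, and your Step~5 (finite-rank reduction before replacing the $Z$-coefficients) is a clean way to avoid simultaneous truncation/mollification in $x_2$. The problem is that the argument stands or falls on the identity
\[
\op{VMO}^{\beta_1,\beta_2}(\R^n)\;\cong\;\VC^{0,\omega_1}\!\big(\R^{n_1},\,Z\big),\qquad Z=\VMO^{\omega_2}(\R^{n_2}),
\]
which you assert rather than prove, and which is genuinely delicate. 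The cited \cite[Section~3]{AHLMO2021} only establishes the norm equivalence for the $\BMO$-level spaces; passing to a bijection between the corresponding vanishing subspaces is a strictly stronger, scale-resolved statement (compare how, in the one-parameter case, the norm equivalence \eqref{eq:meyer} alone does not yet give \eqref{eq:postM1} — the whole second half of Theorem~\ref{thm:cubepw} is devoted to that). Concretely, $\VC^{0,\omega_1}(\R^{n_1},Z)$ records only that $b(x_1,\cdot)\in Z$ for each $x_1$ and that the $Z$-\emph{norm} oscillations in $x_1$ vanish at the small/large/far scales; it does not by itself encode that the $x_2$-vanishing rates of the slices $b(x_1,\cdot)$ are \emph{uniform} in $x_1$, which is what the rectangular $\op{VMO}^{\beta_1,\beta_2}$ conditions demand. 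The bi-parameter far condition is the clearest illustration: a rectangle $I_1\times I_2$ with $I_1$ bounded near the origin of $\R^{n_1}$ and $I_2$ far out in $\R^{n_2}$ is far from the origin in $\R^n$, and controlling its mixed oscillation requires uniform $x_2$-far vanishing of the slices over bounded $x_1$ — not a property that falls out of set-membership in $Z$ without some additional compactness/equicontinuity argument. Citing ``the VMO side of \cite{MarOik24}'' for this step is circular, since \cite{MarOik24} is precisely the reference to which the present paper defers the proof of the theorem.

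There is also a second, softer signal that the reduction does not go through as cleanly as you suggest: the paper records Corollary~\ref{thm:quant}, a version of Theorem~\ref{thm:main:Euc} that is \emph{uniform over equibounded families} $G\subset_u \VC^{0,\omega}(\R^n,Y)$, and states that this — not the vector-valued Theorem~\ref{thm:main:Euc} alone — is what \cite{MarOik24} needs. Your argument never invokes Corollary~\ref{thm:quant}. That is not a disproof, but the need for uniform-over-families estimates is exactly what you would expect if the bi-parameter argument approximates the $x_2$-slices $\{b(x_1,\cdot)\}_{x_1}$ as a family and must keep track of uniformity in $x_1$, which is again the step your proposal sweeps under the rug. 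To close the gap you would need to prove the $\VMO$-level decoupling isomorphism itself — in particular, to show that membership of $b$ in $\op{VMO}^{\beta_1,\beta_2}(\R^n)$ forces each slice into $Z$ with vanishing rates that are locally uniform in $x_1$, and conversely — and this is a nontrivial bi-parameter Meyers-type lemma, not a formal consequence of \cite[Section~3]{AHLMO2021}.
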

Theorem \ref{thm:MarOik24} is a bi-parameter analogue of Theorem \ref{thm:main:Euc3}.
The proof of Theorem \ref{thm:MarOik24} is split between the current article and \cite{MarOik24}. 
We record a quantitative version of Theorem \ref{thm:main:Euc} in the body-text after the proof of Theorem \ref{thm:main:Euc} as Corollary \ref{thm:quant}, and then, assuming Corollary \ref{thm:quant}, Theorem \ref{thm:MarOik24} is proved in \cite[Section 3.]{MarOik24}.

\subsection*{Acknowledgements}
We thank the anonymous referee for thoroughly reading the manuscript and for their comments, which led to improvements in both clarity and presentation.
C. M. was supported by the grant no. 334466 of the Research Council of Norway, ``Fourier Methods and Multiplicative Analysis''. T.O. was supported by the Finnish Academy of Science and Letters, and
by the MICINN (Spain) grant no. PID2020-114167GB-I00.

\section{Bounded support approximation}\label{sect:approx:bs}

In this section we prove Theorem \ref{thm:approx:bs} and then show how to obtain from this Theorem \ref{thm:main:Euc} and then Corollary \ref{thm:quant}. After these, at the end of this section, we prove Theorem \ref{thm:cubepw}. Recall that we simplify the notation by denoting $\| x \|_X = \|x\|$ and $\| f(x) \|_Y = |f(x)|.$  We denote by $B(x,r) : = \lbrace z\in X\, : \, \|x-z\| \leq r \rbrace$ the closed ball with center $x\in X$ and radius $r>0.$
We begin with a simple remark. 
\begin{remark}\label{remark:VCareclosedsubspaces}
	{\em For normed spaces $X$ and $Y,$ and $\Gamma\in\{\op{small}, \op{large}, \op{far}\},$ the set $\VC^{0,\omega}_{\Gamma}(X,Y)$ is closed with respect to the $\dot C^{0,\omega}$-seminorm, meaning that if a sequence of functions $(f_n)_n \subset \VC^{0,\omega}_{\Gamma}(X,Y)$ converges to an $f\in \dot C^{0,\omega}(X,Y),$ then $f\in  \VC^{0,\omega}_{\Gamma}(X,Y)$ as well. Indeed, for any $\varepsilon>0,$ we find $n\in \N$ so that $\|f-f_n\|_{\dot C^{0,\omega}} \leq \varepsilon$. Then, for any two distinct points $x,y\in X,$ we write
		\begin{align*}
			\frac{|f(x)-f(y)|}{\omega(\|x-y\|)} & \leq \frac{|f_n(x)-f_n(y)|}{\omega(\|x-y\|)} + \frac{|(f-f_n)(x)-(f-f_n)(y)|}{\omega(\|x-y\|)}  \\
			&  \leq  \frac{|f_n(x)-f_n(y)|}{\omega(\|x-y\|)}  + \|f-f_n\|_{\dot C^{0,\omega}(X,Y)} \leq \frac{|f_n(x)-f_n(y)|}{\omega(\|x-y\|)}  + \varepsilon.
		\end{align*}
		Since $f_n \in \VC^{0,\omega}_{\Gamma}(X,Y),$ the above clearly implies $f\in  \VC^{0,\omega}_{\Gamma}(X,Y).$

		Consequently, in the case where $Y$ is a Banach space, $(\VC^{0,\omega}_{\Gamma}(X,Y), \|\cdot\|_{\dot C^{0,\omega}(X,Y)})$ is a Banach space of modulo constant equivalence classes.
	}
\end{remark}

The following two lemmas will be very useful.

\begin{lemma}\label{lem:dist=far} Let $X,Y$ be arbitrary normed spaces. Let $\omega$ satisfy \eqref{eq:mod:coer0} and \eqref{eq:mod:db}. Then, we have 
	\begin{align*}
		\VC_{\op{far}}^{0,\omega}(X,Y)  =  \Big\{ f\in  \dot{C}^{0,\omega}(X,Y) \, : \, \lim_{\delta \to \infty} \sup_{\max(\|x\|,\|y\|) >\delta} \osc^{\omega}_{(x,y)}(f)  = 0 \Big\}.
	\end{align*}
\end{lemma}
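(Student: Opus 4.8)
The plan is to prove the two inclusions separately; the containment ``$\supseteq$'' is immediate and ``$\subseteq$'' carries all the content. For the easy direction, note that any $(x,y)$ with $\min(\|x\|,\|y\|)>\delta$ also satisfies $\max(\|x\|,\|y\|)>\delta$, so for every $\delta>0$ one has $\sup_{\min(\|x\|,\|y\|)>\delta}\osc^{\omega}_{(x,y)}(f)\le\sup_{\max(\|x\|,\|y\|)>\delta}\osc^{\omega}_{(x,y)}(f)$; hence any $f\in\dot C^{0,\omega}(X,Y)$ for which the right-hand side tends to $0$ automatically lies in $\V^{0,\omega}_{\op{far}}(X,Y)$, i.e.\ in $\VC^{0,\omega}_{\op{far}}(X,Y)$.

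For the reverse containment I would fix $f\in\VC^{0,\omega}_{\op{far}}(X,Y)$, write $M=\Norm{f}{\dot C^{0,\omega}}$, discard the trivial case $X=\{0\}$, and argue as follows. Given $\varepsilon>0$, the hypothesis produces $R>0$ with $\osc^{\omega}_{(a,b)}(f)\le\varepsilon$ whenever $\min(\|a\|,\|b\|)>R$. I then \emph{fix one single point} $z\in X$ with $\|z\|=2R$. Let $\delta>4R$ and take any $x,y$ with $\max(\|x\|,\|y\|)>\delta$; by symmetry of $\osc^{\omega}_{(x,y)}(f)$ assume $\|x\|\ge\|y\|$, so that $\|x\|>\delta$. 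If $\|y\|>R$ there is nothing to do, as $\min(\|x\|,\|y\|)>R$ gives $\osc^{\omega}_{(x,y)}(f)\le\varepsilon$. If $\|y\|\le R$, the triangle inequality gives $\|x-y\|>\delta-R$, $\|z-y\|\le 3R$, and $\|x-z\|\le\|x\|+2R\le 2(\|x\|-R)\le 2\|x-y\|$, while $\min(\|x\|,\|z\|)=2R>R$, so $|f(x)-f(z)|\le\varepsilon\,\omega(\|x-z\|)$. Splitting through $z$ and using that $\omega$ is non-decreasing together with \eqref{eq:mod:db},
\[
\osc^{\omega}_{(x,y)}(f)\le\frac{|f(x)-f(z)|+|f(z)-f(y)|}{\omega(\|x-y\|)}\le\varepsilon\,\frac{\omega(2\|x-y\|)}{\omega(\|x-y\|)}+\frac{M\,\omega(3R)}{\omega(\delta-R)}\le\varepsilon\,C_{\op{db}}+\frac{M\,\omega(3R)}{\omega(\delta-R)}.
\]
Taking the supremum over all such $x,y$ and letting $\delta\to\infty$, the last term vanishes by \eqref{eq:mod:coer0} (the part $\omega(t)\to\infty$), so $\limsup_{\delta\to\infty}\sup_{\max(\|x\|,\|y\|)>\delta}\osc^{\omega}_{(x,y)}(f)\le\varepsilon\,C_{\op{db}}$; letting $\varepsilon\to0$ finishes the proof, since $f\in\dot C^{0,\omega}(X,Y)$.

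The one place requiring care is the insertion of the auxiliary point $z$. The tempting move --- splitting the ``long thin'' pair $(x,y)$ at an intermediate scale such as $\sqrt\delta$, so that both resulting pairs are ``far--far'' and covered by the ``$\min$''-hypothesis --- fails for slowly growing moduli, because $\omega(\sqrt\delta)/\omega(\delta)$ need not tend to $0$. The resolution is to place $z$ at a \emph{fixed} scale $2R$ depending only on $\varepsilon$: the single pair $(z,y)$ that the ``$\min$''-hypothesis cannot control then contributes only the bounded amount $M\,\omega(3R)$, which is annihilated after division by $\omega(\|x-y\|)\ge\omega(\delta-R)\to\infty$. This is the only use of $\omega(\infty)=\infty$; doubling enters just to compare $\omega(\|x-z\|)$ with $\omega(\|x-y\|)$, and \eqref{eq:mod:coer1} plays no role.
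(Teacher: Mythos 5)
Your proof is correct and uses essentially the same idea as the paper's: split the oscillation through an auxiliary point $z$ whose norm lives at a fixed scale depending only on $\varepsilon$, bound the far--far piece via the hypothesis, and kill the remaining piece using $\|f\|_{\dot C^{0,\omega}}$ together with $\omega(\infty)=\infty$. The only difference is cosmetic: the paper places $z$ on the segment $[x,y]$ at the sphere $\|z\|=M$, so that $\omega(\|z-y\|)\le\omega(\|x-y\|)$ is automatic, whereas you fix a single $z$ with $\|z\|=2R$ and compensate with one extra doubling step $\omega(\|x-z\|)\le\omega(2\|x-y\|)\le C_{\op{db}}\,\omega(\|x-y\|)$.
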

\begin{proof} Only the inclusion $\subset$ is not immediate; let $f\in   \VC_{\op{far}}^{0,\omega}(X,Y)$ and $\varepsilon>0.$ Let $M = M(\varepsilon)$ be such that if $y,z\in X$ and $\| y\|,\|z\|\geq M,$ then 
	$|f(y)-f(z)| \leq \varepsilon\omega(\|y-z\|).$
	Now we consider arbitrary $x,y\in X,$ and assume $\|y\|> R$, for certain $R \gg M$ to be specified later. If $\|x\|>M,$ then we are done by how we fixed $M$ above. So suppose that $x\in B(0,M) .$ Take a point $z \in [x,y]$ with $\|z\|=M.$ By $\|x\| , \|z\| \leq M$ and $\|y\|, \|z\| \geq M,$ 
	\begin{equation}\label{eq:p}
		\begin{split}
			\frac{|f(x)-f(y)|}{\omega(\|x-y\|)} & \leq \frac{|f(x)-f(z)|}{\omega(\|x-y\|)}+\frac{|f(z)-f(y)|}{\omega(\|z-y\|)} \cdot \frac{\omega(\|z-y\|)}{\omega(\|x-y\|)} \\
			&    \leq \frac{\|f\|_{\dot{C}^{0,\omega}(X)} \omega( \|x-z\|)}{\omega(\|x-y\|)}  +  \varepsilon  \leq \frac{\|f\|_{\dot{C}^{0,\omega}(X)}   \omega( 2M)}{\omega(\|x-y\|)}  +  \varepsilon  .
		\end{split}
	\end{equation}
	To control the above term, note that if $\|y\|>R \gg M,$ then we have $\|x-y\|\geq R/2$ because $\|x\| \leq M.$ Also, by condition $\lim_{t\to\infty}\omega(t)=\infty$, we find $R=R(M,\varepsilon)$ so that $\omega(2M) \leq \varepsilon \omega(R)$. Therefore, since $\|y\| \geq R,$ using the doubling condition \eqref{eq:mod:db} we conclude
	\[
	\op{RHS}\eqref{eq:p}\leq  \frac{\|f\|_{\dot{C}^{0,\omega}(X)}   \omega( 2M)}{\omega(R/2)}  +  \varepsilon \leq  \frac{\|f\|_{\dot{C}^{0,\omega}(X)}   \omega( 2M)}{C_{\op{db}}^{-1}\omega(R)}  +  \varepsilon   \leq (1+C_{\op{db}}\|f\|_{\dot{C}^{0,\omega}(X)})\varepsilon.
	\]
\end{proof}

\begin{lemma}\label{lem:LipSmall} Let $X,Y$ be arbitrary normed spaces. Let $\omega$ be non-decreasing and satisfy \eqref{eq:mod:db}. Let $\tau:X\to X$ be Lipschitz. Then, 
	$$
	\VC^{0,\omega}_{\op{small}}(X,Y)\circ \tau \subset \VC^{0,\omega}_{\op{small}}(X,Y).
	$$
\end{lemma}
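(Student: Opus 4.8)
The plan is to unpack both definitions and exploit that pre-composition with a Lipschitz map contracts distances by at most a constant factor $L = \Lip(\tau)$, together with the doubling property of $\omega$ to absorb that factor. First I would fix $f \in \VC^{0,\omega}_{\op{small}}(X,Y)$ and set $g = f \circ \tau$. The two things to check are: (a) $g \in \dot C^{0,\omega}(X,Y)$, i.e.\ $g$ has finite $\dot C^{0,\omega}$-seminorm; and (b) $\osc^\omega_\delta(g) \to 0$ as $\delta \to 0$.

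For (a), for distinct $x,y \in X$ write
\begin{align*}
\frac{|g(x) - g(y)|}{\omega(\|x-y\|)} = \frac{|f(\tau x) - f(\tau y)|}{\omega(\|\tau x - \tau y\|)} \cdot \frac{\omega(\|\tau x - \tau y\|)}{\omega(\|x - y\|)} \le \|f\|_{\dot C^{0,\omega}} \cdot \frac{\omega(L\|x-y\|)}{\omega(\|x-y\|)},
\end{align*}
where I used that $\omega$ is non-decreasing and $\|\tau x - \tau y\| \le L \|x-y\|$. By iterating the doubling condition \eqref{eq:mod:db} roughly $\ceil{\log_2 L}$ times, $\omega(L t) \le C_{\op{db}}^{\ceil{\log_2 \max(L,1)}} \omega(t)$ for all $t > 0$, so the last quotient is bounded by a constant depending only on $L$ and $C_{\op{db}}$; hence $\|g\|_{\dot C^{0,\omega}} \lesssim_{L, C_{\op{db}}} \|f\|_{\dot C^{0,\omega}} < \infty$. (If $\tau$ is constant then $g$ is constant and there is nothing to prove, so one may assume $L > 0$.)

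For (b), the subtlety is that the pair $(\tau x, \tau y)$ realizing the inner supremum for $g$ at scale $\delta$ corresponds to a pair at scale $\|\tau x - \tau y\| \le L\delta$ for $f$, but possibly much smaller — this is fine, since we only need smallness, not an exact scale. Given $\varepsilon > 0$, pick $\delta_0 > 0$ so that $\osc^\omega_\eta(f) \le \varepsilon$ for all $\eta \le L\delta_0$; equivalently, $|f(u) - f(v)| \le \varepsilon\, \omega(\|u-v\|)$ whenever $0 < \|u - v\| \le L\delta_0$. Then for $\delta \le \delta_0$ and any $x \ne y$ with $\|x-y\| = \delta$, either $\tau x = \tau y$ (in which case $g(x) = g(y)$ and the ratio is $0$) or $0 < \|\tau x - \tau y\| \le L\delta \le L\delta_0$, so the same computation as above gives
\begin{align*}
\frac{|g(x) - g(y)|}{\omega(\|x-y\|)} \le \varepsilon \cdot \frac{\omega(\|\tau x - \tau y\|)}{\omega(\|x-y\|)} \le \varepsilon \cdot C_{\op{db}}^{\ceil{\log_2 \max(L,1)}},
\end{align*}
again by the non-decreasing property and iterated doubling. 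Taking the supremum over such pairs, $\osc^\omega_\delta(g) \le C_{\op{db}}^{\ceil{\log_2 \max(L,1)}}\, \varepsilon$ for all $\delta \le \delta_0$, which proves $\osc^\omega_\delta(g) \to 0$. I do not expect any real obstacle here; the only point requiring a little care is handling the degenerate case $\tau x = \tau y$ and making sure the doubling constant is applied to $\omega(Lt)/\omega(t)$ rather than the other way around (since $L$ could be less than $1$, in which case monotonicity alone suffices and no doubling is needed). Note also that \eqref{eq:mod:coer0} and \eqref{eq:mod:coer1} are not needed for this lemma — only monotonicity and \eqref{eq:mod:db}, consistent with the stated hypotheses.
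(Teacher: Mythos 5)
Your proof is correct and takes essentially the same approach as the paper: precompose, factor the quotient as $\frac{|f(\tau x)-f(\tau y)|}{\omega(\|\tau x-\tau y\|)}\cdot\frac{\omega(\|\tau x-\tau y\|)}{\omega(\|x-y\|)}$, and absorb the Lipschitz dilation by iterating the doubling condition. Your write-up is a bit more explicit than the paper's (it separately verifies $g\in\dot C^{0,\omega}$ and handles the degenerate case $\tau x=\tau y$, which the paper leaves implicit), but the idea is identical.
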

\begin{proof} 
	Let $\varepsilon>0$ and we need to show that if $r$ is taken sufficiently small, then
	\begin{align}
		\Norm{f\circ\tau}{\dot C^{0,\omega}_{r}(X,Y)} := \sup_{\substack{x\not=y\in X \\ \|x-y\|<r}}\frac{|(f\circ\tau)(x)-(f\circ\tau)(y)|}{\omega(\|x-y\|)} \leq \varepsilon.
	\end{align}
	Let us denote 
	$\varepsilon(r) := \Norm{f}{\dot C^{0,\omega}_{r}}$ so that  $\varepsilon(r)\to 0$ as $r\to 0.$
	Let $\|x-y\| < r$ and by $\omega$ being non-decreasing and doubling we bound
	\begin{equation*}\label{eq:tsep1}
		\begin{split}
			\frac{|(f\circ\tau)(x)-(f\circ\tau)(y)|}{\omega(\|x-y\|)} &= \frac{|f(\tau(x)) - f(\tau(y)) |}{\omega(\|\tau(x)-\tau(y)\|)} \frac{\omega(\|\tau(x)-\tau(y)\|)}{\omega(\|x-y\|)} \\ &\leq \varepsilon( \op{Lip}(\tau)r)\cdot C(C_{\op{db}},\op{Lip}(\tau)),
		\end{split}
	\end{equation*}
	and the right-hand side tends to zero as $r$ does.
\end{proof}

\begin{proof}[Proof of Theorem \ref{thm:approx:bs}]

	First, note that if $f\in \VC^{0,\omega}_{\op{small}}(X,Y)$ has bounded support, then, by property \eqref{eq:mod:coer1} of $\omega,$ $f$ belongs to $\VC^{0,\omega}(X,Y).$ Since $\VC^{0,\omega}(X,Y)$ is closed under limits with respect to the seminorm $\dot{C}^{0,\omega},$ as per Remark \ref{remark:VCareclosedsubspaces}, the inclusion $``\subset"$ holds true.

	The reverse inclusion is much more complicated. To construct the desired approximation, we define, for each $M>0,$ the following key function
	\begin{align}\label{eq:tauDEF}
		\tau_M:X \to B(0,M),\qquad	\tau_M(x) = \begin{cases}
			x,\qquad &\|x\| <M, \\
			\left(\frac{2M-\|x\|}{M}\right)^{2} x,\qquad M\leq & \|x\|<2M,\\
			0,\qquad &\|x\| \geq 2M.
		\end{cases}
	\end{align}
	 Our goal is to show that, given $f\in\VC^{0,\omega}$ and $\varepsilon >0,$ we can find $M>0$ large enough so that
	\begin{align}\label{eq:approx1}
		f\circ\tau_M\in\VC^{0,\omega}_{\op{small}},\qquad\Norm{f-f\circ\tau_M}{\dot{C}^{0,\omega}}\lesssim \varepsilon.
	\end{align}
	Notice that $f\circ\tau_M-f(0)$ is zero outside $B(0,2M),$ and hence $f \circ \tau_M$ has bounded support. (Recall that $\dot C^{0,\omega}$ is defined modulo constant equivalence classes.) The choice of $M$ will be made in terms of a small parameter $r$ and a large parameter $R,$ both depending on $\varepsilon$ and $f.$ But before doing that, we need to check some properties concerning the function $\tau_M$, for any $M>0.$

	First, we verify that
	\begin{align}\label{eq:tau1}
		\| \tau_M(x)-\tau_M(z) \|  \leq  5 \|x-z\| \quad \text{for all} \quad x,z\in X.
	\end{align}
	Indeed, for points $x,z \in B(0,2M) \setminus B(0,M),$ the definition of $\tau$ and the triangle inequality give
	\begin{equation}\label{eq:tauGrad}
		\begin{split}
			\|	 \tau_M (x) - \tau_M (z) \| &  \leq \Big  | \big(2- \frac{\|x\|}{M} \big)^{2} -  \big(2- \frac{\|z\|}{M} \big)^{2} \Big  | \|z \| +  \big(2- \frac{\|x\|}{M} \big)^{2} \|x-z\|  	 
			\\
			& \leq  \big( 2  - \frac{\|x\|}{M} +  2- \frac{\|z\|}{M} \big)  \frac{\|x-z\|}{M} \|z\|   +  \big(2- \frac{\|x\|}{M} \big)^{2} \|x-z\| \\ 
			&\leq   \big( 4  - \frac{2M}{M} \big)\frac{\|x-z\|}{M}  (2M) + \big(2- \frac{M}{M} \big)^{2} \|x-z\| = 5 \|x-z\|,
		\end{split}
	\end{equation}
	where in the second bound we used the basic identity $a^2-b^2 = (a+b)(a-b).$
	As $\tau_M$ is obviously $1$-Lipschitz in the sets $B(0,M)$ and $B(0,2M)^c,$ and $\tau_M$ is continuous in $X,$ we deduce \eqref{eq:tau1} for all $x,z\in X.$
	
	Then we show the following contraction property\footnote{ If the power $2$ in the definition \eqref{eq:tauDEF} of $\tau_M$ was replaced with the power $1,$ neither \eqref{eq:tau3} nor a variation of \eqref{eq:tau3} with a function $\theta: (0, \infty) \to (0, \infty)$ satisfying $\lim_{M\to\infty}\theta(M) = 0$ instead of $1/\sqrt{M}$ is true.}: 
\begin{multline}\label{eq:tau3}
	\Big\{ M>R \geq 1, \quad  x\in B(0,M)^c,\quad \tau_M(x),\tau_M(z)\in B(0,R)   \Big\} \\ 
	\Longrightarrow   \|\tau_M(x)-\tau_M(z)\|  \leq  \frac{5R}{\sqrt{M}} \|x-z\|.
\end{multline}
	To show \eqref{eq:tau3}, assume $M>R\geq 1,$ and we distinguish two cases. If $\| x-z \|> M/2,$ then we have
	\begin{align*}
		\|\tau_M(x)-\tau_M(z)\| \leq 2R < 2R\frac{\|x-z\|}{M/2}  =  \frac{4R}{M}\|x-z\| \leq \frac{5R}{\sqrt{M}} \|x-z\| .
	\end{align*}  
	So next assume that $z\in B(x,M/2).$ Since $R < M$ and $\tau_M(z)\in B(0,R)$, by the definition of $\tau$ we must have necessarily $\|z\|> M.$ Thus, it remains to show that 
\begin{multline}\label{eq:tau4}
				\Big\{ M>R \geq 1, \quad  x,z\in B(0,M)^c,\quad \tau_M(x),\tau_M(z)\in B(0,R)   \Big\}  \\
	\Longrightarrow  \| \tau_M(x)-\tau_M(z) \|    \leq  \frac{5R}{\sqrt{M}} \|x-z\| ,
\end{multline}
	which is symmetric with respect to both variables $x,z.$ Notice that if $x,z\in B(0,2M)^c,$ then the left-hand side of the claimed estimate is zero. Assume that $x,z\in B(0,2M)\setminus B(0,M)$ and we make the following observation.
	By $\tau_M(x)\in B(0,R)$ and $x\in B(0,M)^c,$ there holds that 
	\[
	\Big \| \left(\frac{2M-\| x \|}{M}\right)^{2} x \Big \| \leq R, \quad  \text{and so} \quad \Babs{\frac{2M-\|  x \|}{M}}  \leq \sqrt{\frac{R}{\|x\|}}\leq \sqrt{ \frac{R}{M}} ,  
	\]
	and the same bound is valid for the variable $z.$ 
	Thus, by \eqref{eq:tauGrad}, and also using $M>R \geq 1$, we obtain
	\begin{align*}
		\| \tau_M (x) - \tau_M (z) \|  &  \leq \Big( 2  - \frac{\|x\|}{M} +  2- \frac{\|z\|}{M} \Big)  \frac{\|x-z\|}{M} \|z\|   +  \Big(2- \frac{\|x\|}{M} \Big)^{2} \|x-z\|  \\
		& \leq  \Big( 4 \sqrt{\frac{R}{M}} +   \frac{R}{M}  \Big) \|x-z\| \leq 5 \sqrt{\frac{R}{M}} \|x-z\| \leq \frac{5R}{\sqrt{M}} \|x-z\|.
	\end{align*}

	As the last case, suppose that $z\in B(0,2M)^c.$ This case follows by the continuity of $\tau_M$ and the previous case. Indeed, taking a point $y\in [x,z]$ with $\|y\|=2M,$ by the previous case applied for $x$ and $y,$ and bearing in mind that $\tau_M(z)=\tau_M(y)=0,$ we deduce
	\[
	\|\tau_M(x)-\tau_M(z)\| = \| \tau_M(x)- \tau_M(y) \| \leq  \frac{5R}{\sqrt{M}}  \| x-y \| \leq  \frac{5R}{\sqrt{M}}  \| x-z \|.
	\]
	We have shown \eqref{eq:tau4}, and thus the claim \eqref{eq:tau3}.
	
	 We now begin the verification of \eqref{eq:approx1}, so let $f \in\VC^{0,\omega}$. Since $\tau_M$ is Lipschitz \eqref{eq:tau1}, Lemma \ref{lem:LipSmall} implies the left claim on the line \eqref{eq:approx1}, for any $M>0.$ Given $\varepsilon >0,$ it remains to determine the parameter $M $ so that the right claim in \eqref{eq:approx1} holds. 
	 
	 Since $f\in  \VC^{0,\omega}_{\op{small}}$, by Lemma \ref{lem:LipSmall} (and its proof), we take $r>0$, depending on the data $f,\varepsilon,C_{\op{db}}$ (but not on $\sigma$) so that
		\begin{align}\label{eq:dens1}
			\sup_{\substack{x\not=y\in X \\ \|x-y\|<r}}\frac{|f \circ \sigma (x)- f \circ \sigma (y)|}{\omega(\|x-y\|)}\leq \varepsilon, \quad \text{for every function } \:  \sigma : X \to X \text{ with } \lip(\sigma) \leq 5.
		\end{align}
	Because $f\in  \VC^{0,\omega}_{\op{far}}$, Lemma \ref{lem:dist=far} allows us to take large $R>1$ (depending only on $\varepsilon$ and $f$) so that 
	\begin{align}\label{eq:dencc}
		\sup_{x\in B(0,R)^c}\sup_{y\in X}\frac{\abs{f(x)-f(y)}}{\omega(\| x-y \|)} \leq \vare.
	\end{align}
Finally, by the assumption \eqref{eq:mod:coer0} on $\omega$, we take $M >0$ large enough so that
		\begin{align}\label{eq:choiceofM}
			M >R, \quad  \omega(R \cdot M^{-1/4}) \leq \varepsilon \omega(r), \quad \text{and} \quad \omega(2R) \leq \varepsilon \omega(M^{1/4}).
		\end{align}
	In order to prove that \eqref{eq:approx1} holds with this \eqref{eq:choiceofM} choice of $M>0,$ we will first show that
	\begin{align}\label{eq:dens2}
		\text{for all }\,x\in B(0,M)^c \,\text{ and }\, z\in X,\,\text{ there holds that }\,\frac{\abs{(f \circ \tau_M)(x)-(f \circ \tau_M)(z)}}{\omega(\| x-z \|)}\lesssim_{\omega,f} \vare.
	\end{align}
	Fix a point $x\in B(0,M)^c$ and we distinguish cases depending on the sizes of $\tau(x)$ and $\tau(z).$ 
	
	\text{Case 1.} If $\tau_M(x)\in B(0,R)^c,$ then by \eqref{eq:dencc}, \eqref{eq:tau1}, and $\omega$ being non-decreasing and doubling \eqref{eq:mod:db},
	\begin{multline*}
	 \abs{f(\tau_M(x))-f(\tau_M(z))} 
	\leq 	\Big(\sup_{u\in B(0,R)^c}\sup_{v\in X}\frac{\abs{f(u)-f(v)}}{\omega(\| u-v\|) }\Big)\omega(\| \tau_M(x)-\tau_M(z) \|) \lesssim_{C_{\op{db}}} \varepsilon \omega(\|x-z\|).
	\end{multline*}
	
	
	\text{Case 2.} The case $\tau_M(z)\in B(0,R)^c$ is symmetrical to \text{Case 1.}

	\text{Case 3.} Now consider $\tau_M(x),\tau_M(z)\in B(0,R)$. First observe that when $\|x-z\| \geq M^{1/4},$ using that $f\in \dot{C}^{0,\omega}$ and \eqref{eq:choiceofM}, we have
	\begin{multline*}
	 \abs{f(\tau_M(x))-f(\tau_M(z))} \leq \|f\|_{\dot{C}^{0,\omega}} \omega(|\tau_M(x )-\tau_M(z )|) \\
		 \leq \|f\|_{\dot{C}^{0,\omega}} \omega(2R) \leq  \|f\|_{\dot{C}^{0,\omega}} \varepsilon  \omega(M^{1/4}) \leq \|f\|_{\dot{C}^{0,\omega}} \varepsilon \omega(\|x-z\|).
	\end{multline*}
	Therefore, as the second subcase of the current Case 3, we now assume that $\|x-z\| \leq M^{1/4}$. Moreover, \eqref{eq:dens1} and the fact that $ \tau_M $ is $5$-Lipschitz (see \eqref{eq:tau1}) imply that, to prove the estimate \eqref{eq:dens2} for $x,z,$ we may assume that $\|x-z\| \geq r$. Thus, our current standing assumptions are 
	\begin{align}\label{eq:standing}
		x \in B(0,M)^c,\quad   \tau_M(x),\tau_M(z)\in B(0,R),\quad	r \leq \|x-z\| \leq M^{1/4}.
	\end{align}
 First we bound 
	\begin{equation}\label{eq:dens3}
		\abs{f(\tau_M(x))-f(\tau_M(z))} \lesssim \Norm{f}{\dot C^{0,\omega}(B(0,R))}\omega(\| \tau_M(x)-\tau_M(z) \|) \lesssim_f \omega(\| \tau_M(x)-\tau_M(z) \|).
	\end{equation}
	Then, by the property \eqref{eq:tau3} (first two of \eqref{eq:standing} are in force), the upper bound $ \|x-z\| \leq M^{1/4}$ of \eqref{eq:standing}, then the second inequality of \eqref{eq:choiceofM}, and finally the lower bound $r \leq \|x-z\|$ of \eqref{eq:standing} give us 
	\begin{equation}\label{eq:dens33}
		\op{RHS}\eqref{eq:dens3}  \lesssim_{C_{\op{db}}} \omega\Big( \frac{R}{\sqrt{M}} \|x-z\| \Big) \leq \omega\Big( \frac{R}{\sqrt{M}} M^{1/4} \Big)  \leq \varepsilon \omega(r) \leq \varepsilon \omega(\|x-z\|),
	\end{equation}
where we used also that $\omega$ is non-decreasing.
	Chaining the estimates \eqref{eq:dens3} and \eqref{eq:dens33}, we have shown \eqref{eq:dens2} for $x,z$ as in the present \text{Case 3.}

	We are finally ready so show the right claim on the line \eqref{eq:approx1}. If $x,z\in B(0,M)$ there is nothing to prove, since $\tau(x) = x$ and $\tau(z) = z.$ The supremum over those $x\in B(0,M)^c,$ $z\in X,$ can be estimated using \eqref{eq:dencc}, the fact that $M>R$ (see \eqref{eq:choiceofM}), and \eqref{eq:dens2} as 
	\begin{align*}
		&\sup_{x\in B(0,M)^c} \sup_{z\in X} \frac{\abs{(f-(f \circ \tau_M))(x)-(f-(f \circ \tau_M))(z)}}{\omega(\|x-z\|)} \\
		&\qquad \leq 	\sup_{x\in B(0,M)^c}  \sup_{z\in X}  \frac{\abs{f(x)-f(z)}}{\omega(\|x-z\|)}+	\sup_{x\in B(0,M)^c}  \sup_{z\in X}  \frac{\abs{(f \circ \tau_M)(x)-(f \circ \tau_M)(z)}}{\omega(\|x-z\|)}\\
		& \qquad \lesssim   \sup_{x\in B(0,R)^c}  \sup_{z\in X}  \frac{\abs{f(x)-f(z)}}{\omega(\|x-z\|)}+	\vare \lesssim \vare.
	\end{align*}
	We have now shown both claims on the line \eqref{eq:approx1}, thus completing the proof of Theorem \ref{thm:approx:bs}. 
\end{proof}

We next show how to easily upgrade the above bounded support approximation into a compact and smooth approximation on $X = \R^n,$ but $Y$ is allowed to be an arbitrary Banach space. 
\begin{proof}[Proof of Theorem \ref{thm:main:Euc}] Let $f\in\VC^{0,\omega}(\R^n,Y)$ and by Theorem \ref{thm:approx:bs} find $g\in\VC_{\op{small}}^{0,\omega}(\R^n,Y)$ with bounded support such that $\| f-g\|_{\dot C^{0,\omega}(\R^n,Y)}<\varepsilon.$ Let $r>0$ and $0\leq \eta_r\in C^{\infty}_c(B(0,r))$ be a standard real-valued smooth bump in $\R^n$ with $\int_{\R^n}\eta_r = 1,$ and we define
$$
h_r(x)=g*\eta_r (x) = \int_{\R^n} g(x-z) \eta_r(z) \ud z = \int_{B(0,r)} g(x-z) \eta_r(z) \ud z, \quad x\in \R^n ,
$$
with the integral in the Bochner sense. 

For the sake of completeness, let us justify why the integral $h_r(x)$ is well-defined for every $x\in \R^n.$ Denote momentarily $B:=B(0,r).$ The function
$$
B \ni z \mapsto \sigma_x(z):=g  ( x-z  ) \eta_r(z) 
$$
is uniformly continuous $B \to Y$, by the compactness of $B$ and the continuity of the functions $g$ and $\eta_r.$ In particular, for each $y^* \in Y^*,$ one has that the mapping $B \to \R$ given by $B \ni z \mapsto y^*( \sigma_x(z))$ is continuous, and thus Lebesgue measurable over $B.$ Also, the continuity of $\sigma_x$ implies that the image $\sigma_x(B)$ is compact, and so the closed linear span
$$
\overline{\mathrm{span}}\lbrace \sigma_x(z) \, : \, z\in B \rbrace 
$$
is a separable subspace of $Y.$ By \cite[Proposition 5.1]{BenyaminiLindenstrauss}, these two properties guarantee the measurability of $\sigma_x$ in $B.$ Then, by \cite[Proposition 5.2]{BenyaminiLindenstrauss} the Bochner-integrability of $\sigma_x$ is equivalent to $\| \sigma_x \|_Y \in L^1(B),$ that is, $\int_B \| \sigma_x(z) \|_Y \ud z < \infty.$ But this is an immediate consequence of the fact that $\eta_r$ is a test function, and that $g,$ being a function of $\dot{C}^{0,\omega}(\R^n,Y)$, is bounded on bounded sets. 

Notice that $h_r$ is compactly supported, as are $g$ and $\eta_r.$ 
Moreover, since $\eta_r\in C^{\infty}(\R^n,Y)$ is with compact support, writing
$$
h_r(x) =  \int_{\R^n} g(z) \eta_r(x-z) \ud z,
$$
and using Differentiation Under the Integral Sign theorems for the Bochner integral (see \cite[Corollary 91]{HajJoh14}), one obtains that $h_r \in C^\infty(\R^n,Y)$ as well.

	Provided that $|u|\leq \delta,$ there holds uniformly in $r$ that 
	\[
	|h_r(x+u)-h_r(x)| = \Babs{\int_{\R^n} \eta_r(z)(g(x+u-z)-g(x-z))\ud  z }\leq \Norm{g}{\dot C^{0,\omega}_{\delta}}\omega(|u|),
	\]
	and hence we find $\delta>0$ so that, uniformly in $r,$ we have 
	$\| g-h_r\|_{\dot C^{0,\omega}_{\delta}} \leq \| g\|_{\dot C^{0,\omega}_{\delta}}  + \| h_r\|_{\dot C^{0,\omega}_{\delta}} \leq  \varepsilon.$ To deal with the scales $\geq \delta$ we argue as follows.
	Observe that $g\in\VC^{0,\omega}_{\op{small}}$ is uniformly continuous, by $\lim_{t\to 0}\omega(t)  = 0,$ and thus the approximation $h_r := g*\eta_r$ converges uniformly to $g,$ as $r\to 0.$ We let $r = r(\delta)$ be so small that $\| g-h_r\|_{\infty} \leq \frac{1}{2}\varepsilon\omega(\delta).$ Then, for $|x-z|>\delta,$ there holds that 
	\[
	\| (g-h_r)(x)-(g-h_r)(z)\| \leq 2\| g-h_r\|_{\infty} \leq \varepsilon\omega(\delta) \leq \varepsilon\omega(|x-z|),
	\]
	by  $\omega$ being non-decreasing.
	This concludes the proof of 
	$\VC^{0,\omega}(\R^n,Y)  \subset  \overline{C^{\infty}_c(\R^n,Y)}^{\dot C^{0,\omega}(\R^n,Y)}.$ 
	
	For the reverse inclusion $``\supset ",$ simply notice that by the Mean Value Inequality, $C^{\infty}_c(\R^n,Y)\subset \op{Lip}_{\op{bs}}(\R^n,Y)$. We remind that this inequality holds for smooth $Y$-valued functions, as a consequence of the Hahn-Banach theorem on $Y.$ Also, by \eqref{eq:mod:coer1} there holds that $ \op{Lip}_{\op{bs}}(\R^n,Y)\subset \VC^{0,\omega}(\R^n,Y).$ As $\VC^{0,\omega}(\R^n,Y)$ is closed with respect to $\dot C^{0,\omega}$ limits, by Remark \ref{remark:VCareclosedsubspaces}, we are done.
\end{proof}

Next, we record a quantitative version of Theorem \ref{thm:main:Euc} to be used in the proof of Theorem \ref{thm:MarOik24} in \cite{MarOik24}. While Corollary \ref{thm:quant} below is one the key elements in applications to the bi-commutator in \cite{MarOik24},  it plays no further role in the present article. 
\begin{dfn}\label{def:VCalt} For a subset $G$ of $\VC^{0,\omega}(\R^n,Y),$ we write  $
	G \subset_u \VC^{0,\omega}(\R^n,Y)$ (with $``u"$ for uniformly) provided that $G$ is equibounded in the $\dot{C}^{0,\omega}$ seminorm, and that 
	for all $\varepsilon>0,$ there exists $t>0$ such that if $|x-y|<t$ or $|x|>t^{-1}$ or $|y|>t^{-1},$ then 
	\[
	\sup_{g\in G} \frac{|g(x)-g(y)|}{\omega(|x-y|)}< \varepsilon.
	\]
\end{dfn}

By an inspection of the above proofs, the reader convinces themselves that the following is true. 
\begin{corollary}\label{thm:quant} Let $Y$ be a Banach space and the modulus $\omega$ satisfy  \eqref{eq:mod:coer0} and \eqref{eq:mod:db}. 
	Let $G\subset_u 	\VC^{0,\omega}(\R^n,Y).$ Then,
	\begin{itemize}
		\item for all $L>0$ we have $G\circ\tau_L\subset_u 	\VC^{0,\omega}(\R^n,Y),$
		\item for all $L,L'>0$ we have $(G\circ\tau_L)*\eta_{L'}\subset_u 	\VC^{0,\omega}(\R^n, Y),$
		\item for all $M=M(\varepsilon)>0$ sufficiently large 
		\begin{align}\label{eq:Lem:MO1}
			\sup_{f\in G} \|  f - f\circ\tau_M  \|_{\dot{ \op{C}}^{0,\omega}(\R^n, Y)}  \leq \varepsilon,
		\end{align}
		\item  for all $K = K(M,\varepsilon)>0$ sufficiently large
		\begin{align}\label{eq:Lem:MO2}
			\sup_{f\in G}\|  f\circ\tau_{M} -( f\circ\tau_M)*\eta_{1/K}  
			\|_{\dot{ \op{C}}^{0,\omega}(\R^n,Y)} \leq \varepsilon.
		\end{align}
	\end{itemize}
\end{corollary}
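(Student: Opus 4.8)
The plan is to exploit the observation, made just before the statement, that the constructions in the proofs of Theorems \ref{thm:approx:bs} and \ref{thm:main:Euc} are already quantitative: each ``choose the parameter large/small'' step there depends on the function $f$ only through the seminorm $\Norm{f}{\dot C^{0,\omega}}$ and through the two oscillation rates
\[
\phi(r):=\sup_{f\in G}\sup_{0<\|x-y\|\le r}\osc^{\omega}_{(x,y)}(f),\qquad \psi(R):=\sup_{f\in G}\sup_{\max(\|x\|,\|y\|)>R}\osc^{\omega}_{(x,y)}(f).
\]
Writing $A:=\sup_{f\in G}\Norm{f}{\dot C^{0,\omega}(\R^n,Y)}$, the hypothesis $G\subset_u\VC^{0,\omega}(\R^n,Y)$ of Definition \ref{def:VCalt} says precisely that $A<\infty$, that $\phi(r)\to0$ as $r\to0$, and that $\psi(R)\to0$ as $R\to\infty$ (the ``$\max$'' form of $\psi$ being the one in Definition \ref{def:VCalt}; note that $\psi$ also dominates the large-scale oscillation $\sup_{\|x-y\|=\delta}\osc^{\omega}_{(x,y)}(f)$ for $\delta$ large, since $\|x-y\|=\delta$ forces $\max(\|x\|,\|y\|)\ge\delta/2$, exactly as in Lemma \ref{lem:dist=far}). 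So I would re-run the two proofs with $A,\phi,\psi$ in place of the corresponding single-function quantities, and verify the first two bullets by a short direct argument.

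For the first bullet I would use that $\lip(\tau_L)\le5$ by \eqref{eq:tau1}: then $\Norm{f\circ\tau_L}{\dot C^{0,\omega}}\le C(C_{\op{db}})A$, so $G\circ\tau_L$ is equibounded, and the computation in the proof of Lemma \ref{lem:LipSmall} gives $\sup_{f\in G}\sup_{\|x-y\|<r}\osc^{\omega}_{(x,y)}(f\circ\tau_L)\le C(C_{\op{db}})\,\phi(5r)\to0$. For the far behaviour I would use that $\tau_L$ takes values in $B(0,L)$ and is constantly $0$ on $B(0,2L)^{c}$: once $\|x\|>2L$ we have $(f\circ\tau_L)(x)=f(0)$, so the oscillation at $(x,y)$ is either $0$ (when $\|y\|>2L$ too) or at most $A\,\omega(2L)/\omega(\|x-y\|)$, and $\|x-y\|\ge\|x\|-2L$ may be forced arbitrarily large, so $\omega(\|x-y\|)\to\infty$ by \eqref{eq:mod:coer0} and this ratio tends to $0$; hence $G\circ\tau_L\subset_u\VC^{0,\omega}$. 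For the second bullet it suffices to observe that convolution with a probability density preserves the relation $\subset_u$, and to apply this to the family $G\circ\tau_L$ just produced: writing $(h*\eta_{L'})(x)-(h*\eta_{L'})(y)=\int\eta_{L'}(u)\,(h(x-u)-h(y-u))\ud u$ and pulling norms under the integral, one sees that $\Norm{h*\eta_{L'}}{\dot C^{0,\omega}}\le\Norm{h}{\dot C^{0,\omega}}$ and that both the small-scale and the far-scale bounds for $h$ pass under the integral verbatim, the latter because $\|x\|>t^{-1}+L'$ forces $\|x-u\|>t^{-1}$ for every $u\in\supp\eta_{L'}$.

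The third bullet is exactly the estimate $\Norm{f-f\circ\tau_M}{\dot C^{0,\omega}}\lesssim\varepsilon$ of \eqref{eq:approx1}: there one fixes $r=r(\varepsilon)$ so that \eqref{eq:denaa} holds and $R=R(\varepsilon)$ so that \eqref{eq:dencc} holds, and for the family $G$ I would instead impose $\phi(r)\le\varepsilon$ and $\psi(R)\le\varepsilon$, which is possible with $r,R$ depending only on $\varepsilon$. The subsequent choice of $M$ then depends only on $\varepsilon,r,R,A$ and on $\omega$ through \eqref{eq:mod:coer0} and \eqref{eq:mod:db}, while the geometric facts \eqref{eq:tau1}, \eqref{eq:tau3}, \eqref{eq:thetaROOT} about $\tau_M$ involve no $f$ at all; rescaling $\varepsilon$ by the implicit constant yields \eqref{eq:Lem:MO1}. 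The fourth bullet is the mollification half of the proof of Theorem \ref{thm:main:Euc} applied with $g=f\circ\tau_M$: I would first pick $\delta$ with $2\sup_{f\in G}\sup_{\|x-y\|<\delta}\osc^{\omega}_{(x,y)}(f\circ\tau_M)\le\varepsilon$, legitimate and uniform since $G\circ\tau_M\subset_u\VC^{0,\omega}$ by the first bullet; then, since the equiboundedness of $G\circ\tau_M$ forces uniform equicontinuity with modulus $\le A'\omega(\cdot)$, where $A':=\sup_{f\in G}\Norm{f\circ\tau_M}{\dot C^{0,\omega}}\le C(C_{\op{db}})A$, one has $\Norm{g-g*\eta_{1/K}}{\infty}\le A'\omega(1/K)\to0$, so for $K=K(\delta,\varepsilon,A')=K(M,\varepsilon)$ large the scales $\|x-y\|>\delta$ contribute at most $2\Norm{g-g*\eta_{1/K}}{\infty}\le\varepsilon\omega(\delta)\le\varepsilon\omega(\|x-y\|)$; this is \eqref{eq:Lem:MO2}.

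I expect no serious obstacle: the work is bookkeeping, checking that every parameter choice in the two earlier proofs sees $f$ only through $\Norm{f}{\dot C^{0,\omega}}$ and the rates $\phi,\psi$, and that the equicontinuity/mollification estimates follow from the uniform seminorm bound alone. The one place that genuinely needs a brief new argument rather than literal transcription is the far-scale part of the first two bullets, where one must use that the truncated (resp. mollified) function is constant off a fixed ball and beat the resulting fixed error $A\,\omega(2L)$ (resp. $A\,\omega(2L+L')$) against $\omega(\|x-y\|)\to\infty$.
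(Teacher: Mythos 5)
Your proposal is correct and is essentially the paper's intended argument: the paper offers no explicit proof of this corollary, only the remark that ``by an inspection of the above proofs, the reader convinces themselves that the following is true,'' and your write-up carries out exactly that inspection, observing that every parameter choice in the proofs of Theorem \ref{thm:approx:bs} and Theorem \ref{thm:main:Euc} depends on $f$ only through the uniform quantities $A$, $\phi$, $\psi$. The only slight value you add beyond ``inspection'' is the short structural far-scale argument for the first two bullets (that $\tau_L$ is constant off $B(0,2L)$, and that $\operatorname{supp}\eta_{L'}\subset B(0,L')$ shifts the far threshold by a fixed amount), which the paper leaves implicit; this is sound and in the spirit of the text.
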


Proving approximations with a more general normed space $X$ and $Y=\R$ (or $Y=\C$) is a delicate task and this is the content of the next Section \ref{sect:approx:inftyDIM}.
But before that, we finish this section by proving the connection between pointwise and mean oscillation type conditions.

\begin{proof}[Proof of Theorem \ref{thm:cubepw}] We first prove the norms equivalent. Fix an arbitrary cube $Q$ and estimate 
	\begin{align*}
		\fint_Q | f(x)-\ave{f}_Q|_Y \ud x \leq \fint_Q\fint_Q | f(x)-f(y)|_Y\ud x\ud y \leq 	\Norm{f}{\dot C^{0,\omega}(\R^n,Y)} \fint_Q\fint_Q \omega(\aabs{x-y}) \ud x\ud y 
	\end{align*}
	and we continue the bound with 
	\begin{align}\label{eq:step:dini}
		\fint_Q\fint_Q \omega(\aabs{x-y}) \ud x\ud y\lesssim_{n,C_{\op{db}}}\omega(\ell(Q)),
	\end{align}
	thus we obtain $\Norm{f}{\dot C^{0,\omega}(\R^n,Y)}\gtrsim_{n,C_{\op{db}}}\Norm{f}{\BMO^\omega(\R^n,Y)}.$ 
	
	For the other direction, fix $x,z\in\R^n$ and let $Q_0$ be a cube containing both $x$ and $y$ such that $\ell(Q_0) = |x-y|.$ Consider the dyadic descendants of $Q_0$ (achieved by iteratively halving the sides) and for every integer $k \geq 0$ let $Q_k(x)$ be the descendant of $Q_0$ of sidelength $2^{-k}\ell(Q_0)$ that contains the point $x.$ Similarly, we define $Q_k(y)$ for each $k.$ By the continuity of $f:\R^n\to Y,$ we may write
	\begin{align}\label{eq:scope1}
		f(x)-f(y) = \Big(\sum_{k=0}^{\infty}\ave{f}_{Q_{k+1}(x)} - \ave{f}_{Q_{k}(x)} \Big)-  \Big(\sum_{k=0}^{\infty}\ave{f}_{Q_{k+1}(y)} - \ave{f}_{Q_{k}(y)}\Big).
	\end{align}
	Both sums are estimated identically, so it is enough to consider the first. We bound
	\begin{align*}
		\Babs{  \sum_{k=0}^{\infty}\ave{f}_{Q_{k+1}(x)} - \ave{f}_{Q_{k}(x)}}_Y &\lesssim_n \sum_{k=0}^{\infty} \fint_{Q_{k}(x)}|f-\ave{f}_{Q_ k(x)}|_Y \leq \Norm{f}{\BMO^{\omega}(\R^n,Y)}\sum_{k=0}^{\infty}\omega(\ell(Q_k(x))) 
	\end{align*}
	and we continue, using that $\omega$ is non-decreasing and the condition \eqref{eq:mod:dini}, with
	\begin{align*}
		\sum_{k=0}^{\infty}\omega(\ell(Q_k(x)))  = \sum_{k=0}^{\infty} \ell(Q_k(x)) \frac{\omega(\ell(Q_k(x)))}{\ell(Q_k(x))} \leq  2\int_0^{\ell(Q_0)}\frac{\omega(t)}{t} \ud t \lesssim [\omega]_{*}  \omega(\ell(Q_0)).
	\end{align*}
	Since $\omega(\ell(Q_0))= \omega(|x-y|),$ it follows that  $\Norm{f}{\dot C^{0,\omega}(\R^n,Y)}\lesssim_{n} [\omega]_{*}  \Norm{f}{\BMO^\omega(\R^n,Y)}.$

		It is clear that the above argument gives $\VC^{0,\omega}_{\Gamma}(\R^n,Y) = \VMO^{\omega}_{\Gamma}(\R^n,Y)$, when $\Gamma=\op{small},$ as well as the inclusion $\VC^{0,\omega}_{\Gamma}(\R^n,Y) \subset \VMO^{\omega}_{\Gamma}(\R^n,Y)$, when $\Gamma=\op{far}.$ 
		The reverse inclusion $``\supset"$ for $\Gamma=\op{far}$ can be seen as follows.  Notice first that the assumption $f\in \VMO^{\omega}_{\op{far}}(\R^n,Y),$ by the above proof, gives the following. For all $\varepsilon>0$ there exists some $M = M(\varepsilon) > 0$ so that 
		\begin{align}\label{eq:jug}
			\sup_{\dist(Q,0) \geq M} \sup_{x,y\in Q} \frac{|f(x)-f(y)|}{\omega(|x-y|)} \leq \varepsilon.
		\end{align}
		If the line $[x,y]$ connecting $x$ to $y$ satisfies $\dist([x,y],0) \geq M,$ then both $x,y$ can be enclosed in some large cube $Q$ such that $\dist(Q,0)\geq M.$ Therefore, by \eqref{eq:jug}, this case is clear.
		
		Let $M = M(\varepsilon)$ be such that \eqref{eq:jug} holds and consider $x,y$ such that $|x|,|y|> R$ but $\dist([x,y],0) < M$ and $R\gg M.$ We consider $M$ fixed and will show that taking $R = R(M,\|f\|_{\dot{C}^{0,\omega}}, \omega)$  sufficiently large yields a correct bound.
		Since $R\gg M$, we find points $z_1,z_2\in [x,y]$ such that $\dist([x,z_1],0) \geq M$, $\dist([z_2,y],0) \geq M$ and $|z_1|,|z_2| \leq M.$ Then, using \eqref{eq:jug} for the pairs of points $(x,z_1)$ and $(z_2,y)$ we bound 
		\begin{multline*}
			|f(x)-f(y)| \leq |f(x)-f(z_1)| + |f(z_1)-f(z_2)| + |f(z_2)-f(y)|  \\
			\leq \varepsilon\omega(|x-z_1|) +  \|f\|_{\dot{C}^{0,\omega}}\omega(|z_1-z_2|) + \varepsilon\omega(|z_2-y|)\leq \Big(2\varepsilon + \frac{\omega(|z_1-z_2|)}{\omega(|x-y|)}\|f\|_{\dot{C}^{0,\omega}}\Big)\omega(|x-y|).
		\end{multline*}
		To get a bound of a multiple of an $\varepsilon$ for the bracketed factor, we use that  $|z_1-z_2| \leq |z_1| + |z_2| \leq 2M$ and that $|x-y| \geq R/2$. Indeed, for the latter bound, note that if $|x-y| < R/2$, then by $|x|,|y|> R$, for each $z \in [x,y]$, we can bound $|z| \geq |x| - |x-y| \geq R/2 > M$ which contradicts $\dist([x,y],0) \leq M.$
		Using that $\omega$ is non-decreasing and $\omega(\infty) = \infty$ we take $R = R(M,\|f\|_{\dot{C}^{0,\omega}}, \omega)$ so that
		\[
		\frac{\omega(|z_1-z_2|)}{\omega(|x-y|)}\|f\|_{\dot{C}^{0,\omega}} \leq  \frac{\omega(4M)}{\omega(R)}\|f\|_{\dot{C}^{0,\omega}} \leq \varepsilon.
		\]
		This concludes the proof of the inclusion
		$\VC^{0,\omega}_{\op{far}}(\R^n,Y) \subset \VMO^{\omega}_{\op{far}}(\R^n,Y).$
	
	For $\Gamma = \op{large},$ a further argument is needed for both inclusions. We begin by showing that $\VC^{0,\omega}_{\op{large}}(\R^n,Y) \supset  \VMO^{\omega}_{\op{large}}(\R^n,Y).$ Let $f\in \VMO^{\omega}_{\op{large}}(\R^n,Y), $ and $\varepsilon>0,$ and let $K \in \N$ be such that if $\ell(Q)\geq 2^K,$ then $\calO^{\omega}(f;Q)\leq \varepsilon.$  Let $N \gg K$ be a large integer to be specified later, and suppose $x,y\in \R^n$ are so that $|x-y| \geq 2^{N}.$ Let $M \geq N$ be so that $ 2^M \leq \abs{x-y}\leq  2^{M+1} ,$ and let $Q_0$ be a cube containing both $x,y$ and with $\ell(Q_0) = |x-y|.$
	Then, again considering the left sum in the expansion \eqref{eq:scope1}, we bound
	\begin{align*}
		&\Babs{\sum_{k=0}^{\infty}\ave{f}_{Q_{k+1}(x)}-\ave{f}_{Q_k(x)}}_Y \leq  \Big(\sum_{k=0}^{M-K} + \sum_{k=M-K+1}^{\infty}\Big) |\ave{f}_{Q_{k+1}(x)}-\ave{f}_{Q_k(x)} |_Y \\
		&\lesssim_n \varepsilon\sum_{k=0}^{M-K}\omega(\ell(Q_k(x))) +  \sup_{\ell(Q)\leq 2^K}\calO^{\omega}(f;Q) \sum_{k=M-K+1}^{\infty}  \omega(\ell(Q_k(x))) \\
		&\lesssim_{C_{\op{db}}}  \varepsilon\int_{2^{K}}^{2^{M+1}}\frac{\omega(t)}{t} \ud t+   \sup_{\ell(Q)\leq 2^{K}}\calO^{\omega}(f;Q)\int_{0}^{2^{K}}\frac{\omega(t)}{t} \ud t\\
		&\lesssim_{C_{\op{db}}}
		\Big(  \varepsilon \Big(\frac{1}{\omega(2^{M+1})} \int_{2^{K}}^{2^{M+1}}\frac{\omega(t)}{t} \ud t\Big) \\
		&\qquad\qquad\qquad +   \frac{\omega(2^K)}{\omega(2^{M+1})}\sup_{\ell(Q)\leq 2^K}\calO^{\omega}(f;Q)\Big(\frac{1}{\omega(2^{K})}\int_{0}^{2^{K}}\frac{\omega(t)}{t} \ud t\Big)\Big)\omega(\ell(Q_0)) \\
		&   \lesssim \Big(  \varepsilon [\omega]_{*} +   \frac{\omega(2^K)}{\omega(2^{N+1})}\Norm{f}{\BMO^{\omega}(\R^n,Y)}[\omega]_{*} \Big)\omega(|x-y|) . 
	\end{align*}
	By the condition $\omega(\infty) = \infty$, we choose $N$ large enough and bound the bracketed factor from above by $\leq 2 \varepsilon[\omega]_{*} .$ Repeating the same proof with $y$ in place of $x$, we control the right sum on the line \eqref{eq:scope1} and thus obtain 
	$
	|f(x)-f(y)| \leq 4 \varepsilon[\omega]_{*}  \omega(|x-y|),
	$
	that is, $f\in \VC^{0,\omega}_{\op{large}}(\R^n,Y) .$

	Then, we show that $\VC^{0,\omega}_{\op{large}}(\R^n,Y) \subset  \VMO^{\omega}_{\op{large}}(\R^n,Y) .$ Let $f\in \VC^{0,\omega}_{\op{large}}(\R^n,Y)$, $\varepsilon>0,$ and $R>0$ be such that $\osc^{\omega}_{\delta}(f)\leq \varepsilon,$ if $\delta\geq R.$ Let $M \gg R $ be a large constant, which we will specify later, and $Q\in\calQ$ a cube such that $\ell(Q) \geq M.$ For every $y\in Q ,$ denote by $Q_R(y)$ the cube centered at $y$ and of $\diam(Q_R(y)) = R.$ Then, we have 
	\begin{align*}
		&\fint_{Q}	\int_{Q}\aabs{f(x)-f(y)}_Y\ud x\ud y = \fint_{Q}	\Big(\int_{Q\setminus Q_R(y)}+ \int_{Q \cap Q_R(y)}\Big)\aabs{f(x)-f(y)}_Y\ud x\ud y \\ 
		&\leq \varepsilon \fint_{Q}\int_{Q\setminus Q_R(y)} \omega(\aabs{x-y})\ud x\ud y +  \sup_{\delta \leq R}\osc^{\omega}_{\delta}(f)\fint_{Q}	 \int_{Q\cap Q_R(y)}\omega(\aabs{x-y})\ud x\ud y \\ 
		&\leq \Big( \varepsilon  \fint_Q\fint_Q \omega(\aabs{x-y}) \ud x\ud y  + \sup_{\delta \leq R}\osc^{\omega}_{\delta}(f) \fint_{Q}	\frac{1}{|Q|} \int_{ Q_R(y)}\omega(\aabs{x-y})\ud x\ud y  \Big)|Q|.
	\end{align*}
	By repeating the bound \eqref{eq:step:dini}, the left term in the brackets is bounded from above by $\lesssim_{n, C_{\op{db}}}\varepsilon\omega(\ell(Q)),$ which is a bound of the correct form.  For the right term we bound
	\[
	\lesssim_n \| f\|_{\dot C^{0,\omega}(\R^n,Y)}	\left( \frac{R}{\ell(Q)} \right)^n \omega(R) \leq \| f\|_{\dot C^{0,\omega}(\R^n,Y)} \omega(R) \leq \varepsilon \omega(\ell(Q));  	\]
	as $\omega(\infty) = \infty,$ and $M$ is chosen sufficiently large.
\end{proof}

\medskip

\section{Approximation in infinite dimensional spaces}\label{sect:approx:inftyDIM}

In this section we study approximations in Banach spaces with respect to the $\dot{C}^{0,\omega}$ seminorm. In Subsections \ref{subsect:LipSmoApprox} and \ref{subsect:SupRefApprox} we establish results for real-valued functions $f: X \to \R$, where the approximating functions are not only $C^k$ or $C^{1,\alpha}$ smooth but also Lipschitz and with bounded support.  In the last Subsection \ref{subsect:C0GamApprox} we obtain $C^\infty$ smooth approximations of Banach-valued mappings $f:c_0(\mathcal{A}) \to Y,$ that are not necessarily Lipschitz, but have bounded support and belong to $\VC^{0,\omega}(X,Y).$ As a corollary, we deduce $C^\infty$ and  $\VC^{0,\omega}$ and approximations with bounded support for all $\VC^{0,\omega}$ functions defined on $\R^n$ and with values in any Banach space.

As we mentioned in the introduction, for general background on smooth analysis on Banach spaces, including smooth renomings and approximations, we refer the reader to the monographs \cite{BenyaminiLindenstrauss} by Benyamini and Lindenstrauss; \cite{DevilleGodefroyZizler} by Deville, Godefroy, and Zizler; and \cite{HajJoh10} by H\'{a}jek and Johanis.

\subsubsection*{Assumptions on the modulus}
Throughout this whole Section \ref{sect:approx:inftyDIM} we assume that the modulus satisfies all the assumptions of Theorem \ref{thm:approx:bs}, i.e. $\omega$ is non-decreasing and satisfies \eqref{eq:mod:coer0}, \eqref{eq:mod:coer1} and \eqref{eq:mod:db}.

\subsubsection*{Notation and basic definitions} 
\begin{itemize}

\item By $\lip(X)$ we denote the class of real-valued Lipschitz functions (not necessarily bounded) on $X.$ We say that $g: X \to \R$ is $L$-Lipschitz provided that $|g(x)-g(y)| \leq L \|x-y\|$ for every $x,y\in X,$ and we denote the minimal Lipschitz constant of $g$ by $\lip(g).$ 

\item We denote by $X^*$ the (continuous) dual of $X,$ and the dual norm in $X^*$ by $\| \cdot \|_*,$ that is,
$$
\| \Lambda \|_*:= \sup \lbrace | \Lambda(v) | \, : \, \|v\| \leq 1 \rbrace,
$$
for every $\Lambda \in X^*.$ 

\item When speaking of differentiability of functions or mappings $f : X \to Y$, we always mean differentiability in the Fr\'{e}chet sense. We say, of course, that $f$ is of class $C^k(X,Y),$ when $f$ has Fr\'{e}chet derivatives up to order $k,$ and those derivatives are continuous on $X.$  

\item We say that a norm $\| \cdot \|$ on $X$ is \textit{of class} $C^k$ if $\| \cdot \| \in C^k(X \setminus \lbrace 0 \rbrace),$ for $k \in \N \cup \lbrace \infty \rbrace.$ 

\item For any class of functions $\mathcal{C}(X,Y),$ the subscripted set $\mathcal{C}_{\op{bs}}(X,Y)$ consists of functions $h: X \to Y$ of class $\mathcal{C}$ that have bounded support, meaning that there exists $R>0$ so that $h(x)=0$ for every $x\in X\setminus B(0,R).$

\item Finally, a \textit{bump} function $h: X \to Y$ is a non-zero function with bounded support.  

\end{itemize}

\subsection{Lipschitz and smooth approximations}\label{subsect:LipSmoApprox} In a real normed space $X$, we consider real-valued functions $``f"$ of the class $\VC^{0,\omega}(X, \R)$, the last abbreviated by $\VC^{0,\omega}(X).$ By Theorem \ref{thm:approx:bs} these functions $``f"$ can be approximated by functions, say $``g",$ of class $ \VC^{0,\omega}_{\op{small}}(X)$ with bounded support, in the $\dot{C}^{0,\omega}(X)$ seminorm. Our goal is to further approximate these functions $``g"$ by Lipschitz functions (whose regularity will vary depending on the smoothness properties of $X$) with bounded support. In approximating $``g"$ a crucial step is to reduce the problem to the problem of approximating Lipschitz functions by smooth Lipschitz functions with good control on the Lipschitz constants. The main results of this subsection are Theorem \ref{thm:main:LipApprox}, valid for arbitrary normed spaces, and Theorem \ref{thm:main:ApproxSeparable}, valid for separable normed spaces $X$ with a fixed degree of smoothness. The proofs of these two Theorems \ref{thm:main:LipApprox} and \ref{thm:main:ApproxSeparable} employ several technical lemma, which will be reused in the later Subsections \ref{subsect:SupRefApprox} and \ref{subsect:C0GamApprox}.

We begin by proving these technical lemmas and for this purpose, it will be useful to consider the \textit{minimal modulus of continuity} of a function $f: X \to \R,$ i.e.,
$$
\omega_f(t):= \sup \lbrace |f(x)-f(y)| \, : \, \|x-y\| \leq t \rbrace, \quad t >0.
$$
Note that $\omega_f: [0, \infty] \to [0, \infty]$ is non-decreasing, and if $f$ is uniformly continuous, then $\lim_{t \to 0} \omega_f(t)=0.$ Moreover, if $f\in \dot{C}^{0,\omega}(X),$ then $\omega_f \leq \| f\|_{\dot{C}^{0,\omega}(X)}\omega,$ and especially $\omega_f(t)$ is finite at every $t,$ but, in general, $\omega$ and $\omega_f$ are incomparable. Furthermore, since $X$ is a normed space, it is easy to see that $\omega_f$ is sub-additive, i.e. $\omega_f(s+t) \leq \omega_f(t) + \omega_f(s)$ for all $t,s>0.$ This implies
\begin{equation}\label{eq:basicpropertyminimalomega_f}
\omega_f( t) \leq 2 t \,\omega_f(1), \quad \text{for all} \quad t \geq 1.
\end{equation}

\begin{lemma}\label{lem:VCsLipApprox} Let $X$ be a normed space and $f\in \VC^{0,\omega}_{\op{small}}(X).$ Then, there exists a sequence $(f_n)_n\subset\op{Lip}(X)$ of functions converging to $f$ in the $\dot{C}^{0,\omega}(X)$-seminorm. Moreover, if $f$ has bounded support, the sequence can be taken  $(f_n)_n\subset\op{Lip}_{\op{bs}}(X)$ to have bounded supports. 
\end{lemma}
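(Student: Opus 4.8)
We want to show that any $f \in \VC^{0,\omega}_{\op{small}}(X)$ is the $\dot C^{0,\omega}$-limit of Lipschitz functions, with bounded-support approximations if $f$ has bounded support. The natural tool is inf-convolution (Moreau–Yosida regularization): for $\lambda > 0$ set
\[
f_\lambda(x) = \inf_{y \in X} \big( f(y) + \lambda \|x - y\| \big).
\]
Each $f_\lambda$ is $\lambda$-Lipschitz (it is an infimum of $\lambda$-Lipschitz functions), provided it is not identically $-\infty$; and since $f$ is uniformly continuous (because $\omega(t) \to 0$), with minimal modulus $\omega_f \le \|f\|_{\dot C^{0,\omega}} \omega$, one checks $f_\lambda$ is finite and $f_\lambda \uparrow f$ pointwise as $\lambda \to \infty$. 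The key quantitative point will be to estimate $\omega_f$ on small scales, using that $f \in \VC^{0,\omega}_{\op{small}}$: writing $\varepsilon(\delta) := \sup_{\|x-y\|\le \delta} |f(x)-f(y)|/\omega(\|x-y\|)$, we have $\varepsilon(\delta) \to 0$ as $\delta \to 0$, so $\omega_f(t) \le \varepsilon(\delta)\,\omega(t)$ for $t \le \delta$.

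**The main estimate.** Fix a small $\delta > 0$. I would split the supremum defining $\|f - f_\lambda\|_{\dot C^{0,\omega}}$ into pairs $x,y$ with $\|x-y\| \le \delta$ and those with $\|x-y\| > \delta$. For the near-diagonal pairs, both $f$ and $f_\lambda$ restricted to scales $\le \delta$ have oscillation controlled: $f$ by $\varepsilon(\delta)\omega$, and $f_\lambda$ also — because $f_\lambda$, being an infimum of translates of $f$ plus cones, inherits the same small-scale oscillation bound up to the doubling constant (this is the same mechanism as in the proof of Lemma~\ref{lem:LipSmall}). So the near-diagonal contribution is $\lesssim_{C_{\op{db}}} \varepsilon(\delta)$, which is small once $\delta$ is small, \emph{uniformly in $\lambda$}. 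For the far-diagonal pairs $\|x-y\| > \delta$, one uses $|f(x) - f_\lambda(x)| + |f(y) - f_\lambda(y)| \le 2\|f - f_\lambda\|_\infty$ and divides by $\omega(\|x-y\|) \ge \omega(\delta)$; so it suffices that $\|f - f_\lambda\|_\infty \le \tfrac12 \varepsilon \,\omega(\delta)$, which holds for $\lambda = \lambda(\delta,\varepsilon)$ large by the uniform convergence $f_\lambda \to f$ (again from uniform continuity: the standard bound $0 \le f(x) - f_\lambda(x) \le \omega_f(\text{something shrinking as }\lambda \to \infty)$). Choosing first $\delta$ then $\lambda$ gives $\|f - f_\lambda\|_{\dot C^{0,\omega}} \le C\varepsilon$.

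**The bounded-support case.** If $\supp f \subset B(0,R)$, the inf-convolutions $f_\lambda$ need not have bounded support, so an extra truncation is needed. One clean route: since $f \in \VC^{0,\omega}_{\op{small}}$ with bounded support lies in $\VC^{0,\omega}(X)$ (by \eqref{eq:mod:coer1}, as noted at the start of the proof of Theorem~\ref{thm:approx:bs}), apply the truncation device $\tau = \tau_M$ from \eqref{eq:tauDEF}: the Lipschitz map $f_\lambda \mapsto f_\lambda \circ \tau_M$ stays Lipschitz with bounded support, lies in $\VC^{0,\omega}_{\op{small}}$ by Lemma~\ref{lem:LipSmall}, and by the estimates \eqref{eq:dens2}--\eqref{eq:approx1} already established in the proof of Theorem~\ref{thm:approx:bs} (applied to the equibounded family $\{f_\lambda\}$, or simply re-run for a single function) one has $\|f_\lambda - f_\lambda \circ \tau_M\|_{\dot C^{0,\omega}} \lesssim \varepsilon$ for $M$ large. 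Alternatively, multiply $f_\lambda$ by a fixed Lipschitz cutoff equal to $1$ on $B(0,R)$ and supported in $B(0,2R)$, using that $f_\lambda = f = 0$-ish outside $B(0,R)$ up to the small error $\|f-f_\lambda\|_\infty$ — but the $\tau_M$ route is cleaner since it reuses machinery already in the paper. Combining, $f_\lambda \circ \tau_M \in \op{Lip}_{\op{bs}}(X)$ approximates $f$.

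**Main obstacle.** The only genuinely delicate point is the \emph{uniform-in-$\lambda$} control of the near-diagonal H\"older oscillation of $f_\lambda$; everything else (Lipschitz-ness of inf-convolutions, $\|f - f_\lambda\|_\infty \to 0$, the far-diagonal split) is routine. I expect this to follow from the subadditivity of $\omega_f$ together with the doubling of $\omega$, exactly as in Lemma~\ref{lem:LipSmall}: one shows $\omega_{f_\lambda} \le \omega_f$ pointwise (infimum of $\lambda$-Lipschitz translates of $f$ cannot oscillate more than $f$ on a given scale — careful here: one compares $f_\lambda(x) - f_\lambda(z)$ by choosing a near-optimal $y$ for one point and testing it at the other), hence the small-scale estimate $\omega_{f_\lambda}(t) \le \varepsilon(\delta)\omega(t)$ for $t \le \delta$ transfers verbatim.
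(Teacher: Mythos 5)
Your core approach coincides with the paper's: Moreau--Yosida inf-convolution $f_\lambda(x)=\inf_y\{f(y)+\lambda\|x-y\|\}$, uniform convergence plus a scale split at $\delta$, and, for the near-diagonal scales, the translation trick (pick a near-optimal $y$ for $f_\lambda(z)$, test $f_\lambda(x)$ at $y+x-z$) to cancel the $\lambda\|\cdot\|$ terms and conclude $\omega_{f_\lambda}\le\omega_f$. This is precisely the mechanism the paper uses, though the paper phrases it as a one-shot estimate on $(f-f_n)(x)-(f-f_n)(z)$ rather than first isolating $\omega_{f_\lambda}\le\omega_f$. One small inaccuracy in your intermediate discussion: the comparison to Lemma~\ref{lem:LipSmall} and the appeal to the doubling constant are off --- $\omega_{f_\lambda}\le\omega_f$ holds exactly, with no loss; your ``main obstacle'' paragraph then gives the correct argument, so this is only a matter of exposition.

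The genuine deviation is the bounded-support step, and there your premise is mistaken. You write that ``the inf-convolutions $f_\lambda$ need not have bounded support,'' but in fact they do, for every $\lambda>0$: if $f=0$ outside $B(0,R)$, then for $\|x\|\ge R+\|f\|_\infty/\lambda$ one has $f_\lambda(x)\le f(x)+0=0$ (testing $y=x$), while over $y\in B(0,R)$ one has $f(y)+\lambda\|x-y\|\ge -\|f\|_\infty+\lambda(\|x\|-R)\ge 0$, and over $y\notin B(0,R)$ one has $f(y)+\lambda\|x-y\|=\lambda\|x-y\|\ge 0$; so $f_\lambda(x)=0$. The paper reaches the same conclusion by localizing the infimum to $B(x,1)$ for $n$ large, using the sub-linear growth \eqref{eq:basicpropertyminimalomega_f} of $\omega_f$. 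Either way, no further truncation is needed, so the entire $\tau_M$ machinery you import from Theorem~\ref{thm:approx:bs} is superfluous here --- and re-running those estimates on $f_\lambda$ would also require first checking $f_\lambda\in\VC^{0,\omega}_{\op{far}}$, which is most easily seen from the very bounded-support property you are trying to produce. Your plan would eventually close, but the direct observation is both necessary to make it rigorous and sufficient to make the $\tau_M$ detour disappear.
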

\begin{proof}
As $f\in \VC^{0,\omega}_{\op{small}}(X),$ by the comments preceding the present lemma, the minimal modulus of continuity $\omega_f$ is sub-additive and satisfies $\omega_f(t)\to 0$, as $t\to 0;$  then, it is a known result that the sequence
	$$
	f_n(x)=\inf_{y\in X} \lbrace f(y)+ n \|x-y\| \rbrace, \quad x\in X,
	$$
	converges uniformly to $f$ in $X$ and each $f_n$ is $n$-Lipschitz; for a proof see e.g. \cite[p. 408]{HajJoh14}.

	Now, let us see that $\|f-f_n\|_{\dot{C}^{0,\omega}(X)} \to 0,$ as $n\to\infty.$ Indeed, given $\varepsilon>0$ there exists $\delta>0$ so that $\|x-z \| \leq \delta$ implies $|f(x)-f(z)| \leq \varepsilon \omega( \|x-z\|).$ Let us take $N \in \N$ large enough so that $\sup_{X}|f-f_n| \leq \varepsilon \omega(\delta)$ for each $n \geq N.$ Using that $\omega$ is non-decreasing, this gives the estimate
	\begin{align*}
		|(f-f_n)(x)-(f-f_n)(z)| \leq 2\|f-f_n\|_\infty \leq \varepsilon \omega(\delta) &\leq \varepsilon \omega(\|x-z\|),  \\  
		&\text{whenever} \quad \|x-z\| \geq \delta, \, n \geq N.
	\end{align*}
	Now, for fixed $x,z\in X$ such that $\|x-z\| \leq \delta,$ we proceed as follows. Given $\eta>0,$ let $y=y(x,n,f,\eta)$ be so that
	$$
	f_n(x) \geq f(y)+n\|x-y\| - \eta.
	$$
	This choice of $y$ and the definition of $f_n$ yield
	\begin{align*}
		(f-f_n)(x)  & -(f-f_n)(z)  \leq f(x)-f(y)-n\|x-y\| + \eta-f(z)+f_n(z) \\
		& \leq f(x)-f(y)-n\|x-y\|+ \eta-f(z) + f(y+z-x)+n\|z-(y+z-x)\| \\
		& \leq f(x)-f(z)-(f(y)-f(y+z-x)) + \eta \leq \varepsilon \omega(\|x-z\|) + \varepsilon\omega(\|x-z\|) + \eta.
	\end{align*}
	Letting $\eta \to 0,$ we get
	$$
	(f-f_n)(x)    -(f-f_n)(z)  \leq 2\varepsilon\omega(\|x-z\|).
	$$
	The same argument swapping $x$ and $z,$gives $|(f-f_n)(x)    -(f-f_n)(z) | \leq 2\varepsilon\omega(\|x-z\|),$ for every $n\in \N.$ We conclude that $\|f-f_n\|_{\dot{C}^{0,\omega}(X)} \leq 2\varepsilon,$ for $n \geq N.$

	For the assertion concerning the boundedness of the supports, we may assume that $f(y)=0,$ whenever $\|y\|\geq R,$ for some $R>0.$ We will localize the infimum defining $f_n.$ For each $y\in X,$ it follows from the definitions of $f_n$ and $\omega_f$ that
	\begin{align}\label{eq:bs1}
	f(y)+n\|x-y\| &\geq f(x)-  \omega_f(\|x-y\|) + n \|x-y\|  \nonumber \\ 
			& \geq f_n(x) -  \omega_f(\|x-y\|) + n \|x-y\|.
	\end{align}
Using property \eqref{eq:basicpropertyminimalomega_f} of the minimal modulus of continuity $\omega_f$, and the observations subsequent to the definition of $\omega_f,$ we have that
$$
\omega_f(\|x-y\|) \leq 2 \|x-y\| \omega_f(1) \leq 2 \omega(1) \|f\|_{\dot{C}^{0,\omega}(X)} \|x-y\|, \quad \text{when} \quad \|x-y\| \geq 1.
$$
Therefore, for $n > 2 \omega(1) \|f\|_{\dot{C}^{0,\omega}(X)} $, one has that $ \op{RHS\eqref{eq:bs1}}>f_n(x),$ provided that $\| x-y\|\geq 1.$ These observations show that the infimum defining $f_n(x)$ is restricted to $B(x,1):$  
		$$
		f_n(x)=\inf_{y\in B(x,1)} \lbrace f(y)+ n \|x-y\| \rbrace, \quad x\in X.
		$$
		Now, if $\|x\| \geq R+ 1, $ then $\|y\| \geq R $, and so $f(y)=0,$ whenever $y\in B(x,1).$ Thus, by the previous formula, we deduce
		$
		f_n(x)= \inf_{y\in B(x,1)} \lbrace   n \|x-y\| \rbrace=0,
		$
		which shows that $f_n$ has bounded support, for $n$ large enough.

\end{proof}

In the following lemma we show that a uniform approximation by Lipschitz functions with controlled Lipschitz constants yields a $\dot{C}^{0,\omega}(X)$ approximation.

\begin{lemma}\label{lem:Lip-Comeg:Approx}
	Let $f: X \to \R$ and $(f_n)_n\subset \op{Lip}(X)$ be a sequence such that $\limsup_n \mathrm{Lip}(f_n)< \infty$ and $f_n$ converges to $f$ uniformly on $X.$ Then, $ \limsup_n \|f_n-f\|_{\dot{C}^{0,\omega}(X)}=0.$   
\end{lemma}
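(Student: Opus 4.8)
The plan is to combine a small-scale estimate, coming from the uniformly bounded Lipschitz constants, with a large-scale estimate, coming from the uniform convergence, exactly in the spirit of the proof of Theorem \ref{thm:main:Euc}. Write $L := \limsup_n \op{Lip}(f_n)$, which is finite by hypothesis, so after discarding finitely many indices we may assume $\op{Lip}(f_n) \leq L+1 =: L'$ for all $n$, and also $\op{Lip}(f-f_n) \leq L'$ is \emph{not} available (we do not know $f$ is Lipschitz), so instead I will only use that each difference $f - f_n$ is bounded in sup norm and that each $f_n$ is $L'$-Lipschitz.

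First I would handle the \emph{small scales}. Fix $\varepsilon > 0$. By assumption \eqref{eq:mod:coer1}, $t/\omega(t) \to 0$ as $t \to 0$, so there is $\delta > 0$ with $t \leq L'^{-1}\varepsilon\,\omega(t)$, i.e. $L' t \leq \varepsilon\,\omega(t)$, for all $0 < t \leq \delta$. Hence for any $x \neq z$ with $\|x-z\| \leq \delta$ and any $n$,
\[
\frac{|f_n(x)-f_n(z)|}{\omega(\|x-z\|)} \leq \frac{L'\|x-z\|}{\omega(\|x-z\|)} \leq \varepsilon.
\]
This bound is uniform in $n$. Next, the \emph{large scales}: by uniform convergence, choose $N$ so that $\sup_X |f-f_n| \leq \tfrac12 \varepsilon\,\omega(\delta)$ for all $n \geq N$; then for $\|x-z\| \geq \delta$, using that $\omega$ is non-decreasing,
\[
\frac{|(f-f_n)(x)-(f-f_n)(z)|}{\omega(\|x-z\|)} \leq \frac{2\sup_X|f-f_n|}{\omega(\delta)} \leq \varepsilon, \qquad n \geq N.
\]

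It remains to upgrade the small-scale bound on $f_n$ to a small-scale bound on $f-f_n$. Since $f_n \to f$ uniformly and each $f_n$ is $L'$-Lipschitz, the limit $f$ is itself $L'$-Lipschitz (a pointwise limit of $L'$-Lipschitz functions is $L'$-Lipschitz), so the same computation as above gives $|f(x)-f(z)| \leq L'\|x-z\| \leq \varepsilon\,\omega(\|x-z\|)$ whenever $\|x-z\| \leq \delta$; combining with the bound on $f_n$ and the triangle inequality yields $|(f-f_n)(x)-(f-f_n)(z)| \leq 2\varepsilon\,\omega(\|x-z\|)$ on small scales, uniformly in $n$. Putting the two regimes together, for every $n \geq N$ and every $x \neq z$ we get $|(f-f_n)(x)-(f-f_n)(z)| \leq 2\varepsilon\,\omega(\|x-z\|)$, i.e. $\|f-f_n\|_{\dot C^{0,\omega}(X)} \leq 2\varepsilon$. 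Since $\varepsilon$ was arbitrary, $\limsup_n \|f-f_n\|_{\dot C^{0,\omega}(X)} = 0$.

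The only mildly delicate point is the observation that $f$ inherits the Lipschitz constant $L'$ from the $f_n$; everything else is a routine split into small and large scales, where \eqref{eq:mod:coer1} does the work near the origin and the uniform convergence does the work away from it. I do not expect any genuine obstacle here — this lemma is the technical engine that lets one pass from uniform-plus-bounded-Lipschitz approximations (as produced by Lemma \ref{lem:VCsLipApprox} and the smoothing results to follow) to $\dot C^{0,\omega}$-approximations.
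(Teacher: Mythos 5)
Your proof is correct and follows the same route as the paper's: reduce to a tail of the sequence with uniformly bounded Lipschitz constants, observe that $f$ inherits the same Lipschitz bound, then split scales at a $\delta$ where \eqref{eq:mod:coer1} kicks in and use uniform convergence at scales $\geq \delta$. Your phrasing of the reduction (``discard finitely many indices'') is if anything slightly cleaner than the paper's ``pass to a subsequence,'' since it transparently preserves the $\limsup$ over the original sequence; otherwise the arguments are essentially identical.
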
 
\begin{proof}
	By passing to a subsequence, we may assume that $L:=\sup_{n} \mathrm{Lip}(f_n)< \infty$. Then, $f$ is Lipschitz on $X,$ with $\lip(f) \leq L.$ Given $\varepsilon>0,$ by $\lim_{t\to 0}t/\omega(t)=0,$ we find $\delta>0$ so that
	$$
	\frac{t}{\omega(t)} \leq \frac{\varepsilon}{1+L}, \quad \text{whenever} \quad t \leq \delta.
	$$
	Let $N\in \N$ be such that $\sup_{X} |f_n-f| \leq \varepsilon \omega(\delta)$ for all $n\geq N.$ Because $f$ and each $f_n$ is $L$-Lipschitz, for any two distinct points $x,z\in X$ with $\|x-z\| \leq \delta,$ we have 
	$$
	\max\left\lbrace \frac{|f(x)-f(z)|}{\omega(\|x-z\|)}, \frac{|f_n(x)-f_n(z)|}{\omega(\|x-z\|)} \right\rbrace \leq  L \frac{\|x-z\|}{\omega(\|x-z\|)} \leq \varepsilon;
	$$
	then, triangle inequality gives
	$$
	\frac{|(f-f_n)(x)-(f-f_n)(z)|}{\omega(\|x-z\|)}  \leq \frac{|f(x)-f(z)|}{\omega(\|x-z\|)}  + \frac{| f_n (x)- f_n (z)|}{\omega(\|x-z\|)}  \leq 2 \varepsilon.
	$$
	And for scales $\|x-z\| \geq \delta,$ observe that for $n \geq N,$ using that $\omega$ is non-decreasing, we have 
	$$
	|(f-f_n)(x)-(f-f_n)(z)| \leq 2 \sup_{X} |f-f_n|  \leq 2 \varepsilon \omega(\delta) \leq 2 \varepsilon \omega(\|x-z\|).
	$$
\end{proof}

We are now ready to prove our first main Theorem \ref{thm:main:LipApprox} on Lipschitz approximations. It does not involve smoothness, but holds in any normed space and will be a key ingredient in the proofs of Theorems \ref{ThmApproxsuper-reflexive} and \ref{thm:main:c0YApprox} of later sections.

\begin{theorem}[Lipschitz approximation of $\VC^{0,\omega}$]\label{thm:main:LipApprox}
Let $X$ be a normed space. Then, the following hold:
	\begin{itemize}
		\item[(i)] $\overline{\mathrm{Lip}(X) \cap \dot{C}^{0,\omega}(X) }^{\dot{C}^{0,\omega}(X)}= \VC^{0,\omega}_{\op{small}}(X),$ 
		\item[(ii)] $\overline{\mathrm{Lip}_{\op{bs}}(X)}^{\dot{C}^{0,\omega}(X)}=\VC^{0,\omega}(X). $
	\end{itemize}
	
\end{theorem}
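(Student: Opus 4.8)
The plan is to prove the two equalities by establishing both inclusions, relying on Theorem~\ref{thm:approx:bs}, Lemma~\ref{lem:VCsLipApprox}, and Lemma~\ref{lem:Lip-Comeg:Approx}. For part (i), the inclusion $\overline{\mathrm{Lip}(X)\cap\dot C^{0,\omega}(X)}^{\dot C^{0,\omega}(X)}\subset \VC^{0,\omega}_{\op{small}}(X)$ follows from the fact that every Lipschitz $f\in\dot C^{0,\omega}(X)$ lies in $\VC^{0,\omega}_{\op{small}}(X)$: indeed, for $\|x-z\|\leq\delta$ one has $|f(x)-f(z)|/\omega(\|x-z\|)\leq \mathrm{Lip}(f)\,\|x-z\|/\omega(\|x-z\|)\to 0$ as $\delta\to0$ by \eqref{eq:mod:coer1}, so $\osc^\omega_\delta(f)\to 0$; then the closedness of $\VC^{0,\omega}_{\op{small}}(X)$ under $\dot C^{0,\omega}$-limits (Remark~\ref{remark:VCareclosedsubspaces}) closes the argument. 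The reverse inclusion $\VC^{0,\omega}_{\op{small}}(X)\subset\overline{\mathrm{Lip}(X)\cap\dot C^{0,\omega}(X)}^{\dot C^{0,\omega}(X)}$ is precisely the content of Lemma~\ref{lem:VCsLipApprox}: any $f\in\VC^{0,\omega}_{\op{small}}(X)$ is the $\dot C^{0,\omega}$-limit of the $n$-Lipschitz infimal-convolution approximants $f_n$, each of which automatically lies in $\dot C^{0,\omega}(X)$ (being Lipschitz and $\dot C^{0,\omega}$-close to $f$).

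For part (ii), the inclusion $\overline{\mathrm{Lip}_{\op{bs}}(X)}^{\dot C^{0,\omega}(X)}\subset\VC^{0,\omega}(X)$ again uses Remark~\ref{remark:VCareclosedsubspaces}: it suffices to check $\mathrm{Lip}_{\op{bs}}(X)\subset\VC^{0,\omega}(X)$. A boundedly supported Lipschitz function is in $\VC^{0,\omega}_{\op{small}}$ by the computation above; it is in $\VC^{0,\omega}_{\op{large}}$ because for $\delta$ large and $\|x-z\|=\delta$, either both $x,z$ lie outside the support (so the difference vanishes) or at least one lies outside a fixed ball, and $\|f\|_\infty$ being finite forces $|f(x)-f(z)|/\omega(\delta)\leq 2\|f\|_\infty/\omega(\delta)\to 0$ by \eqref{eq:mod:coer0}; and it is in $\VC^{0,\omega}_{\op{far}}$ for the same reason, since once $\min(\|x\|,\|y\|)$ exceeds the support radius plus anything, either the difference is zero or we again control it by $2\|f\|_\infty/\omega(\|x-y\|)$ using that $\|x-y\|$ is then bounded below. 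So $\mathrm{Lip}_{\op{bs}}(X)\subset\VC^{0,\omega}(X)$, and closedness gives the inclusion.

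The reverse inclusion $\VC^{0,\omega}(X)\subset\overline{\mathrm{Lip}_{\op{bs}}(X)}^{\dot C^{0,\omega}(X)}$ is the substantive direction and proceeds in two stages. Given $f\in\VC^{0,\omega}(X)$ and $\varepsilon>0$, Theorem~\ref{thm:approx:bs} produces $g\in\VC^{0,\omega}_{\op{small}}(X)\cap C_{\op{bs}}(X)$ with $\|f-g\|_{\dot C^{0,\omega}(X)}<\varepsilon$. Then one applies the boundedly-supported version of Lemma~\ref{lem:VCsLipApprox} to $g$: since $g$ has bounded support, the localized infimal convolutions $g_n$ are $n$-Lipschitz, have bounded support for $n$ large, and converge to $g$ in the $\dot C^{0,\omega}$-seminorm. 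Choosing $n$ so that $\|g-g_n\|_{\dot C^{0,\omega}(X)}<\varepsilon$ yields $g_n\in\mathrm{Lip}_{\op{bs}}(X)$ with $\|f-g_n\|_{\dot C^{0,\omega}(X)}<2\varepsilon$, completing the proof. The only delicate point — handled already inside Lemma~\ref{lem:VCsLipApprox} — is that the infimum defining $g_n$ genuinely localizes so that bounded support is preserved, which is where property~\eqref{eq:basicpropertyminimalomega_f} of the minimal modulus is used; beyond invoking that lemma, no new obstacle arises here.
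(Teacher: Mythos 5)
Your proof is correct and follows essentially the same route as the paper's: part (i) combines Lemma~\ref{lem:VCsLipApprox} in one direction with condition~\eqref{eq:mod:coer1} plus the closedness in Remark~\ref{remark:VCareclosedsubspaces} in the other, while part (ii) chains Theorem~\ref{thm:approx:bs} with the boundedly supported case of Lemma~\ref{lem:VCsLipApprox} for the substantive inclusion and uses $\mathrm{Lip}_{\op{bs}}(X)\subset\VC^{0,\omega}(X)$ plus closedness for the easy one. Your extra verification that boundedly supported Lipschitz functions lie in each of the three vanishing classes is slightly more detailed than needed (once $\min(\|x\|,\|y\|)$ exceeds the support radius the difference is identically zero, so the far condition is trivial), but this is harmless elaboration rather than a deviation.
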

\begin{proof} \item[(i)] The inclusion $``\subset"$ follows immediately from Lemma \ref{lem:VCsLipApprox}. For the converse inclusion $``\supset",$ we observe that condition \eqref{eq:mod:coer1} gives $ \lip(X) \cap \dot{C}^{0,\omega}(X) \subset \VC^{0,\omega}_{\op{small}}(X)$, and then recall that $\VC^{0,\omega}_{\op{small}}(X)$ is closed.

	\item[(ii)] By Theorem \ref{thm:approx:bs} we know that if $f\in \VC^{0,\omega}(X),$ and $\varepsilon>0,$ we can find $g\in \VC^{0,\omega}(X)$ with bounded support and $\|f-g\|_{\dot{C}^{0,\omega}(X)} \leq \varepsilon.$ Then, Lemma \ref{lem:VCsLipApprox} provides us with $h\in \lip_{\op{bs}}(X)$ so that $\|h-g\|_{\dot{C}^{0,\omega}(X)} \leq \varepsilon,$ and therefore $\|f-h\| _{\dot{C}^{0,\omega}(X)} \leq 2 \varepsilon;$ this shows the inclusion $``\supset".$

For the converse implication $``\subset"$, it is enough to observe that  $\mathrm{Lip}_{\op{bs}}(X)\subset \VC^{0,\omega}(X)$
		and that $\VC^{0,\omega}(X) $ is a closed subspace of $\dot{C}^{0,\omega}(X);$ see Remark \ref{remark:VCareclosedsubspaces}.
\end{proof}

Performing smooth approximations in the appropriate Banach spaces will require more work. We first show that if a normed space $X$ has a  $C^k$ and Lipschitz bump function, from now on a $C^k\cap\op{Lip}$ bump, then a $C^k\cap\op{Lip}$ approximation with uniformly bounded Lipschitz constants, of a Lipschitz function $g$ with bounded support, can be upgraded to a $C^k\cap\op{Lip}$ approximation with uniformly bounded Lipschitz constants \emph{and bounded supports}.

\begin{lemma}\label{lem:SmoothLipBsApprox}
	Let $k \in \N \cup \lbrace \infty \rbrace$ be fixed and let $X$ be a normed space that admits a $C^k\cap \op{Lip}$ bump. Let $g\in \op{Lip}(X)$ and $(g_n)_n\subset C^k(X)\cap\op{Lip}(X)$ be a sequence of functions converging uniformly to $g$ on $X,$ and with $\limsup_n \lip(g_n) < \infty.$ 
	
	Then, if $g$ has bounded support, there exists a sequence $(h_n)_n \subset C^k(X)\cap\op{Lip}(X)$ with  bounded supports that converge to $g$ uniformly (on $X$) and moreover $\limsup_n \lip(h_n) < \infty.$
\end{lemma}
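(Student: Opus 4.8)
The idea is to multiply $g_n$ by a smooth Lipschitz cutoff that equals $1$ on a large ball containing $\supp g$ and vanishes outside a slightly larger ball; the point of having a $C^k\cap\op{Lip}$ bump on $X$ is precisely that it gives us such a cutoff. The main obstacle is that $g_n$ is not identically zero outside $\supp g$ (only $g$ is), so truncating $g_n$ changes it, and we must control this change uniformly in $n$, both in sup-norm and in Lipschitz constant. The key quantitative input is that $g_n \to g$ uniformly, so $\sup_X|g_n - g| \to 0$, and hence on the region $\{x : \dist(x,\supp g)\geq 1\}$ (say) we have $|g_n(x)| = |g_n(x) - g(x)| \to 0$ uniformly; this is what lets the truncation error go to zero.

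\medskip

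\textbf{Step 1: Construct a smooth Lipschitz cutoff.} Suppose $\supp g \subset B(0,R)$. Using the $C^k\cap\op{Lip}$ bump on $X$, a standard argument (see e.g.\ \cite[Chapter 2]{HajJoh10}) produces, for every pair of radii $0<a<b$, a function $\varphi \in C^k(X)\cap\op{Lip}(X)$ with $0\leq \varphi\leq 1$, $\varphi\equiv 1$ on $B(0,a)$, $\supp\varphi\subset B(0,b)$, and $\lip(\varphi)\leq C/(b-a)$ for an absolute constant $C$ (depending only on the fixed bump). Fix such a $\varphi$ for $a = R+1$ and $b = R+2$; then $\lip(\varphi)\leq C$.

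\medskip

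\textbf{Step 2: Define the truncated sequence and control sup-norms.} Set $h_n := \varphi\cdot g_n$. Each $h_n$ is $C^k$ as a product of two $C^k$ functions, has bounded support contained in $B(0,R+2)$, and on $B(0,R+1)\supset \supp g$ we have $h_n = g_n$. To see uniform convergence $h_n\to g$: on $B(0,R+1)$, $h_n - g = g_n - g \to 0$ uniformly; on $X\setminus B(0,R+1)$, $g = 0$ while $|h_n| = \varphi|g_n| \leq |g_n| = |g_n - g|$ since $g$ vanishes there, so again $|h_n - g|\leq \sup_X|g_n - g|\to 0$. Hence $\sup_X|h_n - g| \leq \sup_X|g_n-g| \to 0$.

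\medskip

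\textbf{Step 3: Control the Lipschitz constants.} Write, for $x,y\in X$,
\[
|h_n(x)-h_n(y)| \leq |\varphi(x)||g_n(x)-g_n(y)| + |g_n(y)||\varphi(x)-\varphi(y)| \leq \lip(g_n)\|x-y\| + |g_n(y)|\,\lip(\varphi)\|x-y\|.
\]
Now $|g_n(y)|$ must be bounded uniformly in $n$ and $y$: if $\|y\|\leq R+2$ we have $|g_n(y)| \leq |g_n(y) - g(y)| + |g(y)| \leq \sup_X|g_n-g| + \sup_X|g|$, which is bounded since $g$ is bounded (being Lipschitz with bounded support) and $\sup_X|g_n-g|\to 0$; if $\|y\| > R+2$ then $\varphi(y)=0$ and, using $\lip(\varphi)$ together with $\varphi(z)\leq 1$ for $z$ in the annulus, one still gets the bound $|h_n(x)-h_n(y)| = |h_n(x)| = |h_n(x) - h_n(z)|$ for a suitable $z$ on the segment with $\varphi(z)=0$... more simply, since $\supp h_n\subset B(0,R+2)$ and $h_n$ is Lipschitz on that ball with the constant just estimated, $h_n$ is globally Lipschitz with the same constant (a compactly-supported function that is $L$-Lipschitz on a ball containing its support is globally $L$-Lipschitz). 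Therefore
\[
\limsup_n \lip(h_n) \leq \limsup_n \lip(g_n) + \lip(\varphi)\Big(\sup_X|g| + \limsup_n\sup_X|g_n-g|\Big) = \limsup_n\lip(g_n) + \lip(\varphi)\sup_X|g| < \infty,
\]
which is the desired uniform bound. This completes the construction of $(h_n)_n\subset C^k(X)\cap\op{Lip}(X)$ with bounded supports, converging uniformly to $g$, with $\limsup_n\lip(h_n)<\infty$.
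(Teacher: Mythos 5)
Your construction is essentially the same as the paper's: multiply $g_n$ by a $C^k\cap\op{Lip}$ cutoff that equals $1$ on a ball enclosing $\supp g$ and vanishes outside a larger ball, then control the sup-norm and Lipschitz constant of $\varphi g_n$ directly (the paper phrases Step~3 via the derivative bound $\|D(\varphi g_n)\|_*\leq \varphi\|Dg_n\|_*+|g_n|\|D\varphi\|_*$, which is equivalent to your difference-quotient computation).

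One inaccuracy in Step~1: the existence of a $C^k\cap\op{Lip}$ bump on a general normed space does \emph{not} let you produce cutoffs between \emph{arbitrarily close} balls $B(0,a)\subset B(0,b)$ with $\lip(\varphi)\leq C/(b-a)$. What you actually get (via \cite[Proposition~II.5.1]{DevilleGodefroyZizler}) is a Lipschitz $\psi\in C^k(X\setminus\{0\})$ with $a_0\|x\|\leq\psi(x)\leq\|x\|$ for some $a_0\in(0,1)$; composing with a one--dimensional cutoff forces the outer radius to be at least a factor $1/a_0$ larger than the inner one, and the resulting Lipschitz constant depends on $a_0$ rather than just $b-a$. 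Your specific radii $R+1$, $R+2$ therefore need not be achievable. This doesn't damage the argument — you only need \emph{some} $C^k\cap\op{Lip}$ cutoff with $\varphi\equiv1$ on a ball containing $\supp g$, bounded support, and $\lip(\varphi)<\infty$ — but the quantitative claim as written is false. Separately, your case split in Step~3 on whether $\|y\|\leq R+2$ is unnecessary: the bound $|g_n(y)|\leq\sup_X|g_n-g|+\sup_X|g|$ holds for \emph{all} $y\in X$ (because $g_n-g$ is uniformly small and $g$ is bounded), so you already have a global Lipschitz estimate and don't need the auxiliary ``compactly supported implies globally Lipschitz'' observation.
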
 

\begin{proof}
	Let $g$ be as in the assumption, and let $R>0$ be so that $g=0$ outside the ball $B(0,R).$ We first construct a suitable bump that decays from one to zero over the annular region $B(0, \lambda R)\setminus B(0,2R),$ for a certain $\lambda>2$ to be fixed later. Since $X$ admits a $C^k\cap\op{Lip}$ bump, by \cite[Proposition II, 5.1]{DevilleGodefroyZizler} there exist a constant $0 < a< 1$ and a function $\psi : X \to [0, \infty)$ so that $\psi \in \lip(X) \cap C^k(X \setminus \lbrace 0 \rbrace)$ and 
	\begin{equation}\label{psicomparabletonorm}
		a \|x\| \leq \psi(x) \leq \|x\|, \quad x\in X.
	\end{equation}
	The statement and proof of \cite[Proposition II, 5.1]{DevilleGodefroyZizler} are written for $k=1,$ but they clearly hold true for $k\in \N \cup \lbrace \infty \rbrace$ as well. Now, pick a function $\theta : \R \to [0,1]$ so that $\theta \in C^\infty(\R) \cap \lip(\R),$ and $\theta(t)=1,$ whenever $t \leq 2R$, and $\theta(t)=0,$ whenever $t \geq 4R.$ Then, define a bump function by
	\begin{equation}\label{suitablebumpfunction}
		\varphi : X \to \R,\qquad \varphi(x)= \theta(\psi(x)), \quad x\in X.
	\end{equation}
	By the properties of $\theta$ and $\psi,$ it is immediate that $\varphi \in C^k(X) \cap \lip(X)$, and that $\varphi$ takes values in $[0,1].$ Also, if $\|x\| \leq 2R,$ one has $\psi(x) \leq 2R$ by \eqref{psicomparabletonorm}, and thus $\varphi(x)=\theta(\psi(x))=1.$ Similary, we deduce that $\varphi(x)=\theta(\psi(x))=0$ for those $x\in X$ such that $\|x\| \geq 4R/a =: \lambda.$

	Now, by assumption we can find a sequence $(g_n)_n \subset C^k(X) \cap \lip (X)$ converging uniformly to $g$ on $X,$ and, after passing to a subsequence, with the property $\sup_n \lip(g_n) < \infty.$ Define $h_n:= \varphi g_n$ for each $n\in \N,$ where $\varphi$ is as in \eqref{suitablebumpfunction}. Since $\varphi$ vanishes outside the ball $B(0,4R/a),$ the function $g_n$ has bounded support. Also,
	\begin{align}\label{eq:laterREF}
		|h_n(x)-g(x)|=|\varphi(x) g_n(x)-g(x)| \leq |\varphi(x)-1| |g_n(x)| + |g_n(x)-g(x)|.
	\end{align}
	The second term in the sum converges to $0$ uniformly on $x.$ For the first term, note that if $\|x\| \leq 2R,$ then $\varphi(x)=1,$ and so $|\varphi(x)-1| |g_n(x)|=0.$ When $\|x\| \geq 2R,$ we have $\lim_n |g_n(x)| = |g(x)|= 0,$ uniformly on those $x,$ by the bounded support of $g.$ Since also $\varphi$ takes values in $[0,1],$ we conclude that the first term tends uniformly to $0$ as $n \to \infty.$ Now we have shown that 
	$$
	\lim_{n\to\infty}\sup_{x\in X} |h_n(x)-g(x)| \to 0.
	$$

	Concerning the regularity of $h_n,$ note that obviously $h_n\in C^k(X)$ because both $\varphi$ and $g_n$ are of class $C^k.$ To verify that $\limsup_n \lip(h_n) < \infty,$ we estimate the derivative of $h_n$ by
	\begin{align*}
		\|Dh_n(x)\|_* \leq \varphi(x) \|Dg_n(x)\|_*  + |g_n(x)| \| D \varphi (x)\|_* \leq \lip(g_n) + \lip(\varphi)( \sup_X |g_n-g|+ \sup_{X} |g| ) .
	\end{align*}
Then $\lip(g_n)\leq \sup_m \lip(g_m),$ $\sup_{X} |g_n-g| \to 0$, and $\sup_{X} |g|<\infty$ because $g$ is Lipschitz with bounded support. This shows $\limsup_n \lip(h_n) < \infty.$ 
\end{proof}

Now, we combine Lemmas \ref{lem:VCsLipApprox}, \ref{lem:Lip-Comeg:Approx}, Theorem \ref{thm:main:LipApprox}, and Lemma \ref{lem:SmoothLipBsApprox}, with an approximation result for separable spaces that admit a $C^{k}\cap\op{Lip}$ bump,
and obtain our most general theorem concerning approximation of $\VC^{0,\omega}$ functions by $C^k\cap\op{Lip}$ functions in separable spaces.

\begin{theorem}[Smooth approximation in separable spaces]\label{thm:main:ApproxSeparable}
	Let $k\in \N \cup\lbrace \infty \rbrace$ and let $X$ be a \textbf{separable} normed space admitting a $C^{k}\cap\op{Lip}$ bump. Then, the following hold:
	\begin{itemize}
		\item[(i)] $\overline{C^k(X)\cap \lip(X) \cap \dot{C}^{0,\omega}(X)}^{\dot{C}^{0,\omega}(X)}=\VC_{\op{small}}^{0,\omega}(X),$  
		\item[(ii)] $ \overline{C^k_{\op{bs}}(X)\cap \mathrm{Lip}(X)}^{\dot{C}^{0,\omega}(X)}=\VC^{0,\omega}(X).$
	\end{itemize}
\end{theorem}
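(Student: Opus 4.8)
The plan is to deduce Theorem \ref{thm:main:ApproxSeparable} from the bounded-support reduction in Theorem \ref{thm:approx:bs}, the Lipschitz approximation in Theorem \ref{thm:main:LipApprox}, and a known smooth-approximation result for separable spaces with a $C^k\cap\op{Lip}$ bump, using Lemmas \ref{lem:Lip-Comeg:Approx} and \ref{lem:SmoothLipBsApprox} to upgrade uniform Lipschitz approximations into $\dot C^{0,\omega}$ approximations with bounded supports. As usual, the easy halves are the inclusions $``\supset"$: in both (i) and (ii) one checks that the approximating class sits inside the target vanishing space. For (i), any $h\in C^k(X)\cap\op{Lip}(X)\cap\dot C^{0,\omega}(X)$ lies in $\VC^{0,\omega}_{\op{small}}(X)$ by \eqref{eq:mod:coer1} (Lipschitz functions have vanishing oscillation on small scales), and $\VC^{0,\omega}_{\op{small}}(X)$ is closed by Remark \ref{remark:VCareclosedsubspaces}. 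For (ii), $C^k_{\op{bs}}(X)\cap\op{Lip}(X)\subset\op{Lip}_{\op{bs}}(X)\subset\VC^{0,\omega}(X)$, again using \eqref{eq:mod:coer1} and the closedness of $\VC^{0,\omega}(X)$.

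For the main inclusions $``\subset"$, I would start from the Lipschitz approximations already produced. For (i): given $f\in\VC^{0,\omega}_{\op{small}}(X)$, Lemma \ref{lem:VCsLipApprox} (equivalently Theorem \ref{thm:main:LipApprox}(i)) yields Lipschitz $f_n\to f$ in $\dot C^{0,\omega}(X)$; so it suffices to approximate a single Lipschitz $g\in\VC^{0,\omega}_{\op{small}}(X)$ by $C^k\cap\op{Lip}$ functions in the seminorm. Here I invoke the separable smooth-approximation theorem (the one referenced just before the statement — of the type in H\'ajek--Johanis, giving, for $X$ separable with a $C^k\cap\op{Lip}$ bump, that any Lipschitz function is the uniform limit of $C^k\cap\op{Lip}$ functions $g_n$ with $\limsup_n\lip(g_n)\lesssim\lip(g)$). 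Then Lemma \ref{lem:Lip-Comeg:Approx} converts this controlled uniform Lipschitz approximation into a $\dot C^{0,\omega}(X)$ approximation, and one concludes by a diagonal/triangle-inequality argument. For (ii): given $f\in\VC^{0,\omega}(X)$, Theorem \ref{thm:approx:bs} provides $g\in\VC^{0,\omega}_{\op{small}}(X)$ with bounded support and $\|f-g\|_{\dot C^{0,\omega}}\le\varepsilon$; Lemma \ref{lem:VCsLipApprox} (second assertion) replaces $g$ by $\tilde g\in\op{Lip}_{\op{bs}}(X)$ with $\|g-\tilde g\|_{\dot C^{0,\omega}}\le\varepsilon$; then the separable smooth-approximation theorem gives $C^k\cap\op{Lip}$ functions $g_n\to\tilde g$ uniformly with $\limsup_n\lip(g_n)<\infty$, and Lemma \ref{lem:SmoothLipBsApprox} upgrades these to $C^k\cap\op{Lip}$ functions $h_n$ with \emph{bounded supports}, still converging uniformly with uniformly bounded Lipschitz constants; finally Lemma \ref{lem:Lip-Comeg:Approx} turns uniform-plus-Lipschitz-control into $\dot C^{0,\omega}(X)$ convergence. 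Chaining the three $\varepsilon$-errors proves $f\in\overline{C^k_{\op{bs}}(X)\cap\op{Lip}(X)}^{\dot C^{0,\omega}(X)}$.

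The step I expect to be the main obstacle is the bookkeeping of support sizes and Lipschitz constants through the chain in (ii): the bounded-support reduction, the Yosida-type infimal-convolution Lipschitzification, the external smooth approximation, and the bump-multiplication localization of Lemma \ref{lem:SmoothLipBsApprox} must all be arranged so that (a) the supports stay uniformly bounded and (b) $\limsup_n\lip(h_n)<\infty$, since only then does Lemma \ref{lem:Lip-Comeg:Approx} apply. In particular one must make sure the smooth approximation theorem is applied to the \emph{Lipschitz, boundedly supported} function $\tilde g$ and not directly to $f$ or $g$, and that the localizing bump $\varphi$ in Lemma \ref{lem:SmoothLipBsApprox} is built on a ball large enough to contain $\op{supp}\tilde g$ but of controlled radius. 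The remaining subtlety is purely qualitative — verifying $C^k$ regularity survives multiplication by the $C^k\cap\op{Lip}$ bump $\varphi$ (built from a $C^k$ smooth norm-like function composed with a smooth cutoff as in \eqref{suitablebumpfunction}) — which is immediate. I would then remark, as the paper does elsewhere, that the $C^\infty$ case for separable Hilbert space follows by Moulis' theorem, recovering Corollary \ref{CorollaryHilbertSeparableApprox}.
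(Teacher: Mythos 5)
Your proposal is correct and follows essentially the same route as the paper: Lemma \ref{lem:VCsLipApprox}/Theorem \ref{thm:main:LipApprox} to Lipschitzify, the H\'ajek--Johanis result \cite[Corollary 15]{HajJoh10} to produce uniformly convergent $C^k\cap\op{Lip}$ approximations with controlled Lipschitz constants, Lemma \ref{lem:SmoothLipBsApprox} to localize supports in (ii), and Lemma \ref{lem:Lip-Comeg:Approx} to upgrade uniform-plus-Lipschitz-control to $\dot C^{0,\omega}$ convergence. Your slightly more explicit chaining through $g$ then $\tilde g$ in (ii) is just a careful spelling-out of what the paper compresses into a single citation of Theorem \ref{thm:approx:bs}; one tiny mismatch is your closing aside that the $C^\infty$ Hilbert case needs Moulis' theorem — the paper's Corollary \ref{CorollaryHilbertSeparableApprox} follows directly from the $k=\infty$ case of this theorem once one exhibits a $C^\infty\cap\op{Lip}$ bump (via $\|x\|^2$), without invoking Moulis.
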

\begin{proof}
	\item[(i)] Let $f\in \VC^{0,\omega}_{\op{small}}(X)$ and $\varepsilon>0.$ By Lemma \ref{lem:VCsLipApprox} we find an approximation $g\in \lip(X)$ so that $\|f-g \|_{\dot{C}^{0,\omega}(X)} \leq \varepsilon.$ Now, since $g: X \to \R$ is a Lipschitz function, and $X$ has a $C^k\cap\op{Lip}$ bump, as a consequence of a result of H\'{a}jek and Johanis \cite[Corollary 15]{HajJoh10} (here we use that $X$ is separable), we find a sequence $(g_n)_n$ of $C^k\cap\op{Lip}$ functions converging uniformly to $g$ such that $\lip(g_n) \leq \lambda \lip(g),$ for all $n,$ and for some absolute constant $\lambda>0$ that may depend on the space $X.$
	Since obviously the function $g$ and the sequence $(g_n)_n$ satisfy all the assumptions of Lemma \ref{lem:Lip-Comeg:Approx}, and hence $\lim_n \|g_n-g\|_{\dot{C}^{0,\omega}(X)}=0,$ completing the proof of the inclusion $``\supset".$
For the reverse inclusion recall that the assumption $t/\omega(t)\to 0,$ as $t\to 0,$ guarantees that $\op{Lip}(X) \cap \dot{C}^{0,\omega}(X)\subset \VC_{\op{small}}^{0,\omega}(X);$ and $\VC_{\op{small}}^{0,\omega}(X)$  is a closed subspace of ${\dot{C}^{0,\omega}(X)}.$

	\item[(ii)] Assume that $f \in \VC^{0,\omega}(X)$ and let $\varepsilon>0.$ According to Theorem \ref{thm:approx:bs} we find $g \in \lip(X)$ with bounded support so that $\|f-g \|_{\dot{C}^{0,\omega}(X)} \leq \varepsilon.$ Now, let  $(g_n)_n$ be the sequence we used in part (i) (but associated with this new $g$), and then Lemma \ref{lem:SmoothLipBsApprox} guarantees a  $C^k\cap\op{Lip}$ approximation with bounded support $(h_n)_n$  so that $\lim_n \sup_X |h_n-g| =0$ and $\limsup_n \lip(h_n) <  \infty.$ Now, we apply Lemma \ref{lem:Lip-Comeg:Approx} to deduce that $\lim_n \|h_n-g\|_{\dot{C}^{0,\omega}(X)} =0$ as well. Thus we pick some $h\in  (h_n)_n\subset C^k(X)\cap \mathrm{Lip}_{\op{bs}}(X)$ such that $\|g-h\|_{\dot{C}^{0,\omega}(X)} \leq \varepsilon,$ which in turn implies
	$$
	\|f-h\|_{\dot{C}^{0,\omega}(X)} \leq \|f-g\|_{\dot{C}^{0,\omega}(X)} + \|g-h\|_{\dot{C}^{0,\omega}(X)} \leq 2 \varepsilon.
	$$
	Thus we have shown the inclusion $``\supset".$
For the converse inclusion $``\subset"$, it is enough to observe again (as we did at the end of the proof of Theorem \ref{thm:main:LipApprox}) that $\mathrm{Lip}_{\op{bs}}(X)\subset \VC^{0,\omega}(X)$ and that $\VC^{0,\omega}(X) $ is a closed subspace of ${\dot{C}^{0,\omega}(X)},$  again, by Remark \ref{remark:VCareclosedsubspaces}.

\end{proof}

\begin{remark}\label{remark:smoothnessofLpspaces} {\em
		Clarifications and remarks concerning Theorem \ref{thm:main:ApproxSeparable} and its proof.
		\begin{enumerate}
			\item[(1)]  If $X$ is a separable Banach space with an equivalent norm of class $C^k$, then Theorem \ref{thm:main:ApproxSeparable} applies for $X$ and $k.$ Indeed, denote such an equivalent norm by $\psi : X \to \R$; and we can assume that $\psi$ satisfies the inequalities on the line \eqref{psicomparabletonorm}. As $\psi \in C^k(X \setminus \lbrace 0 \rbrace),$ and since $\psi$ is subadditive in $X,$ it is clear that $\psi$ is $1$-Lipschitz (with respect to the original norm $\| \cdot \|$) on $X.$ If we now pick $\theta: \R \to [0,1]$ of class $C^\infty \cap \lip$, with $\theta=1$ on $(-\infty, 1/2]$ and $\theta=0$ on $[1,+\infty),$ it is easy to see that the composition $\theta \circ \psi$ defines a $C^k \cap \lip$ bump function on $X.$ Thus, we are in assumptions of Theorem \ref{thm:main:ApproxSeparable} for $X$ and $k.$
			
\item[(2)] Let us now recall the smoothness of the canonical, or equivalent, norms of some classical separable spaces. The canonical norms of the spaces $X= \ell_p$ or $L^p,$ for $1<p<\infty$, have the following order of smoothness $k=k(p):$ 
			\begin{align*}
				k(p) = \begin{cases}
					\infty\qquad & \text{ if } p \text{ is even}, \\
					p-1 ,\qquad & \text{ if } p \text{ is odd},\\
					\left\lfloor p \right\rfloor,\qquad & \text{ if } p \text{ is not an integer}.
				\end{cases}
			\end{align*}
Also, for a compact metric space $K$ which is \textit{scattered},  let $C(K)$ be the Banach space consisting of real-valued continuous functions on $K$, equipped with the supremum norm. Then $C(K)$ admits an equivalent $C^\infty$-norm. Recall that a set $S$ is scattered if every nonempty subset of $S$ contains a (relatively) isolated point. We refer the reader to \cite[Theorems V.1.1 and V.1.8]{DevilleGodefroyZizler} for detailed proofs and statements of these theorems. Then, according to point (1) above, Theorem \ref{thm:main:ApproxSeparable} applies for $X=L^p, \, \ell_p$ with $k_p$ as above, and for $X=C(K)$ with $K$ countable and $k=\infty.$

			\item[(3)] For $X=c_0,$ see Section \ref{subsect:C0GamApprox} below, Theorem \ref{thm:main:ApproxSeparable} also holds for $k=\infty,$ but for this particular $X,$ we will obtain much more in Theorem \ref{thm:main:c0YApprox} below.

			\item[(4)] In the proof of Theorem \ref{thm:main:ApproxSeparable} we employed \cite[Corollary 15]{HajJoh10} for our real-valued functions, but this result holds for target spaces $Y=B_0(V)$ (real-valued functions $f$ in a topological space $V$ with $f(v) \to 0$ as $v \to v_0$, for some fixed $v_0 \in V$); or $Y= C_u(P)$ (uniformly continuous bounded functions on a metric space $P$). Naturally, the particular case $Y=\R$ is covered by these spaces, e.g., when $Y= C_u(P)$ for $P=\lbrace 0 \rbrace \subset \R$. 
		\end{enumerate}
	}
\end{remark}

Now, as a corollary of Theorem \ref{thm:main:ApproxSeparable}, we provide $C^{\infty}\cap\op{Lip}$ approximation of $\VC^{0,\omega}(X)$ functions for separable Hilbert spaces $X$. In particular, notice that this extends Theorem \ref{thm:main:Euc} to infinite dimensions.

\begin{corollary}\label{CorollaryHilbertSeparableApprox}
	Let $X$ be a separable \textbf{Hilbert}  space. Then, the following hold:
	\begin{itemize}
		\item[(i)] $\overline{C^\infty(X)\cap \lip(X) \cap \dot{C}^{0,\omega}(X)}^{\dot{C}^{0,\omega}(X) }=\VC_{\op{small}}^{0,\omega}(X),$  
		\item[(ii)] $ \overline{C^\infty_{\op{bs}}(X)\cap \mathrm{Lip}(X)}^{\dot{C}^{0,\omega}(X)}=\VC^{0,\omega}(X).$
	\end{itemize}
\end{corollary}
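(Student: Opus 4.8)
The plan is to reduce Corollary \ref{CorollaryHilbertSeparableApprox} immediately to Theorem \ref{thm:main:ApproxSeparable} applied with $k=\infty$. Separability is part of the hypothesis, so the only point to verify is that a separable Hilbert space $X$ admits a $C^{\infty}\cap\op{Lip}$ bump function; once such a bump is produced, items (i) and (ii) of the corollary are literally items (i) and (ii) of Theorem \ref{thm:main:ApproxSeparable}.

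First I would recall that the canonical Hilbert norm $\|x\|=\langle x,x\rangle^{1/2}$ is of class $C^{\infty}$ on $X\setminus\{0\}$: it is the composition of the bounded bilinear (hence $C^{\infty}$) map $x\mapsto\langle x,x\rangle$ with the real-analytic function $t\mapsto\sqrt{t}$ on $(0,\infty)$. Moreover the norm is $1$-Lipschitz on all of $X$ by the triangle inequality. Following the argument in Remark \ref{remark:smoothnessofLpspaces}(1), I would then fix a function $\theta:\R\to[0,1]$ of class $C^{\infty}\cap\op{Lip}$ with $\theta\equiv 1$ on $(-\infty,1/2]$ and $\theta\equiv 0$ on $[1,\infty)$, and set $\varphi(x):=\theta(\|x\|)$. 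Then $\varphi$ is $C^{\infty}$ on all of $X$ (the only potentially problematic point, the origin, is harmless because $\theta$ is constant near $0$, so $\varphi\equiv 1$ on a neighborhood of $0$), it is Lipschitz as a composition of Lipschitz maps, and it has bounded support; that is, $\varphi$ is a $C^{\infty}\cap\op{Lip}$ bump on $X$.

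With this bump in hand, $X$ meets all the hypotheses of Theorem \ref{thm:main:ApproxSeparable} for $k=\infty$, and the conclusion of the corollary follows at once; in particular, since $X=\R^n$ is a (finite-dimensional, hence separable) Hilbert space, this recovers and extends Theorem \ref{thm:main:Euc} for $Y=\R$. I do not expect any genuine obstacle here: all of the analytic content sits in Theorem \ref{thm:main:ApproxSeparable} (and, upstream of it, in Theorem \ref{thm:approx:bs}, Lemma \ref{lem:VCsLipApprox}, Lemma \ref{lem:Lip-Comeg:Approx} and Lemma \ref{lem:SmoothLipBsApprox}), and the only care required in the present step is the elementary check that the Hilbert norm, suitably cut off, yields a bump of the desired regularity, with the behavior at the origin dealt with by choosing $\theta$ constant near $0$.
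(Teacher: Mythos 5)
Your proposal is correct and follows the same route as the paper: both reduce to Theorem \ref{thm:main:ApproxSeparable} with $k=\infty$ after producing a $C^\infty\cap\lip$ bump on the Hilbert space. The only cosmetic difference is that the paper composes $\theta$ with the globally smooth map $x\mapsto\|x\|^2$ rather than with $\|\cdot\|$ itself, thereby sidestepping the (correctly handled, in your version) issue at the origin; either construction works.
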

\begin{proof}
	Both statements (i) and (ii) follow immediately from Theorem \ref{thm:main:ApproxSeparable}, since a Hilbert space always admits a $C^\infty\cap\op{Lip}$ bump. For the sake of completeness, we exhibit an elementary construction of such a bump.
	
	The function $\psi: X \to \R$ given by $x \mapsto \psi(x)=\|x\|^2$ is of class $C^\infty(X)$ with the Fr\'{e}chet derivative $D\psi (x): X \to \R,$ at every $x\in X$, given by $D\psi(x)(v) = \langle 2x, v \rangle $ for all $v\in X.$ Take a function $\theta: \R \to [0,1]$ of class $C^\infty(\R) \cap \lip(\R)$ with $\theta(t)=1$ for all $t \leq 1/2$ and $\theta(t)=0$ for all $t \geq 1.$ Then, the map $\varphi = \theta \circ \psi : X \to [0,1]$ is of class $C^\infty(X)$ with $\varphi(x)=1$ for all $\|x \| \leq 1/2$ and $\varphi(x) = 0$ for all $\|x \| \geq 1.$ Also, the Fr\'{e}chet derivative of $\varphi$ satisfies
	$$
	\|D\varphi(x)\|_*= \|\theta'(\psi(x)) \Chi_{B(0,1)}(x) 2x  \|_* \leq 2 \lip(\theta),
	$$
	showing that $\varphi$ is Lipschitz on $X.$ Therefore, $\varphi$ is a $ C^\infty\cap\op{Lip} $ bump on $X.$ 
\end{proof}

\subsection{Approximation in super-reflexive spaces}\label{subsect:SupRefApprox}

Let us now approximate real-valued functions of the classes $\VC^{0,\omega}_{\op{small}}(X)$ and $\VC^{0,\omega}(X)$ over a super-reflexive Banach space $X.$ We remind that $X$ is super-reflexive if every Banach space $Y$ that is \textit{finitely representable} into $X$ is reflexive. According to Pisier's renorming theorem \cite{Pisier}, a Banach space $X$ is super-reflexive if and only if $X$ admits a renorming (which we keep denoting by $\| \cdot\|$), an exponent $\alpha \in (0,1]$, and a constant $C>0$ for which
\begin{equation}\label{C1alpharenorming}
	\|x+h\|^{1+\alpha} + \|x-h\|^{1+\alpha} - 2 \|x\|^{1+\alpha} \leq C \|h\|^{1+\alpha}, \quad \text{for all} \quad x,h\in X.
\end{equation}
See also \cite[pp. 412--413]{BenyaminiLindenstrauss} for a proof of this equivalence. This property is often rephrased by saying that $X$ admits a renorming with \textit{modulus of smoothness of power type} $1+\alpha.$ Since the function $ x\mapsto\psi(x) = \|x\|^{1+\alpha}$ is convex and continuous on $X,$ then \eqref{C1alpharenorming} implies that $\psi$ is of class $C^{1,\alpha}(X);$ and with $\|D \psi\|_{\dot{C}^{0,\alpha}(X,X^*)} \lesssim_{\alpha,C} 1$; see \cite[Lemma V.3.5]{DevilleGodefroyZizler}. 

We remind that a function $f: X \to \R$ belongs to the class $C^{1,\alpha}(X)$ if $f$ is Fr\'{e}chet differentiable at every point $x\in X$ and the Fr\'{e}chet derivative $Df: X \to X^*$ is $\alpha$-H\"older continuous on $X,$ namely, that
$$
\|Df\|_{\dot{C}^{0,\alpha}(X,X^*)}:= \sup \left\lbrace \frac{\|Df(x)-Df(y)\|_*}{\|x-y\|^\alpha} \, : \, x,y\in X, \, x \neq y \right\rbrace < \infty.
$$

In Hilbert spaces the Lasry--Lions regularization theorem (see \cite{LasryLions} or \cite[p. 408]{HajJoh10}) provides uniform approximation of Lipschitz functions by $C^{1,1}\cap\op{Lip}$ functions. Cepedello-Boiso \cite[Theorem 1]{Cepedello} used an ingenious variant of the Lasry-Lions technique to obtain uniform approximation of Lipschitz functions in super-reflexive spaces by $C^{1}$ functions whose derivatives are $\alpha$-H\"older on bounded subsets, but not globally on $X.$ These approximations are, of course, Lipschitz on bounded sets, but not globally Lipschitz. However, for our purposes, we need to approximate Lipschitz functions uniformly by \textit{globally} $C^{1,\alpha}\cap\op{Lip}$ functions, and we need a good control on the Lipschitz constants of the approximations. In Theorem \ref{thm:main:approxLipssuper-reflexive}(2) below, we have obtained uniform approximations that are \textit{globally} $C^{1,\alpha}$ and \textit{globally} Lipschitz on $X$, preserving the Lipschitz constants up to a multiplicative factor depending only on $\alpha$ and $X.$ In the proof, we use a recent result on $C^{1,\omega}$ extensions for jets \cite{AzaMud21} to construct $C^{1,\alpha}$ \textit{bump functions} with all the properties stated in part (1) of Theorem \ref{thm:main:approxLipssuper-reflexive}. This is combined with a method of H\'{a}jek and Johanis \cite[Proposition 1]{HajJoh10} to \textit{glue up} a suitable sequence of these bumps. Both parts (1) and (2) in Theorem \ref{thm:main:approxLipssuper-reflexive} are interesting to the theory of smooth approximation, and also will be essential for the approximation of $\VC^{0,\omega}$ functions.

Let us mention that, after making the first version of this article public, it has come to our attention that part (2) of Theorem \ref{thm:main:approxLipssuper-reflexive} was very recently discovered by Johanis \cite{Jo24} too, by means of a different proof. A benefit of Johanis' proof is that it provides the sharp constant $\kappa = 1.$ Since we only need an absolute control on $\kappa$ for our purposes, and since we consider part (1) of independent interest as well, we have chosen to include our original proof of Theorem \ref{thm:main:approxLipssuper-reflexive}(2).

\begin{theorem}\label{thm:main:approxLipssuper-reflexive}  
Let $X$ be a super-reflexive space, let $\alpha \in (0,1]$ and $C>0$ be so that a renorming of $\| \cdot \|$ of $X$ satisfies \eqref{C1alpharenorming}. Then, there exists a constant $\kappa  \geq 1$ depending only on $\alpha$ and $C$ for which the following hold. 
	\begin{enumerate}
		\item  For every set $S \subset X$ there exists $h_S: X \to [0,1]$ of class $C^{1,\alpha}(X)\cap\op{Lip}(X)$ so that: \\
		$h_S(x)=0,$ for every $x\in S$; \\
		$h_S(x)=1,$ whenever $d(x,S) \geq 1;$ \\
		$D h_S(x)=0,$ for every $x\in  S \cup \lbrace y\in X \, : \, d(y,S) \geq 1 \rbrace;$ 
		and
		$$
		\lip(h_S) + \|Dh_S\|_{\dot{C}^{0,\alpha}(X,X^*)} \leq \kappa .
		$$
		\item  Given an $L$-Lipschitz function $g: X \to \R$ and $\varepsilon>0,$ there exists $h\in C^{1,\alpha}(X)$ so that $h$ is $  \kappa  L$-Lipschitz, and $\sup_X |g-h| \leq \varepsilon.$
	\end{enumerate}
	
\end{theorem}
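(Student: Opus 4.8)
The plan is to establish part (1) first and then deduce part (2) from it by gluing together rescaled copies of the bumps produced in part (1) along the level sets of $g$.

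For part (1), fix $S\subset X$ and consider the two disjoint closed sets $\overline S$ and $E:=\{x\in X:d(x,S)\ge 1\}$, which satisfy $\dist(\overline S,E)\ge 1$. On the closed set $F:=\overline S\cup E$ I would prescribe the $1$-jet assigning the value--derivative pair $(0,0)$ at every point of $\overline S$ and $(1,0)$ at every point of $E$. Since the derivative part of this jet is identically $0$ on $F$, the only non-trivial instance of the first order Whitney--Glaeser compatibility condition $|f(x)-f(y)-Df(y)(x-y)|\le M\|x-y\|^{1+\alpha}$ arises when $x$ and $y$ lie in different pieces; there the left-hand side equals $1$ while $\|x-y\|\ge\dist(\overline S,E)\ge 1$, so the jet has $C^{1,\alpha}$-compatibility constant at most $1$, \emph{uniformly in $S$}. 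Invoking the $C^{1,\alpha}$ (that is, $C^{1,\omega}$ with $\omega(t)=t^{\alpha}$) extension theorem for $1$-jets on a Banach space admitting a renorming with modulus of smoothness of power type $1+\alpha$ --- which is precisely condition \eqref{C1alpharenorming} --- from \cite{AzaMud21}, I obtain $H\in C^{1,\alpha}(X)$ extending this jet with $\|DH\|_{\dot{C}^{0,\alpha}(X,X^*)}\le C'$ for a constant $C'=C'(\alpha,C)$. Because $DH$ vanishes on $F$ and every $x\notin F$ has $d(x,\overline S)<1$, the H\"older bound forces $\|DH(x)\|_*\le C'$ for all $x$, so $H$ is $C'$-Lipschitz; moreover $H=0$, $DH=0$ on $\overline S$ and $H=1$, $DH=0$ on $E$. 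Finally, fixing once and for all $\theta\in C^{\infty}(\R)\cap\lip(\R)$ with $\theta(\R)\subset[0,1]$, $\theta\equiv 0$ on $(-\infty,0]$ and $\theta\equiv 1$ on $[1,\infty)$, I set $h_S:=\theta\circ H$. Then $h_S$ takes values in $[0,1]$, equals $0$ on $S$ and $1$ on $\{d(\cdot,S)\ge 1\}$, its derivative $Dh_S=\theta'(H)\,DH$ vanishes wherever $DH$ does (in particular on $S\cup\{d(\cdot,S)\ge 1\}$), and a routine product-rule estimate bounds $\lip(h_S)+\|Dh_S\|_{\dot{C}^{0,\alpha}(X,X^*)}$ by a constant $\kappa\ge 1$ depending only on $\alpha$, $C$ and the fixed $\theta$, hence only on $\alpha$ and $C$.

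For part (2), I would first rescale the bumps: for $\delta>0$ set $h_{S,\delta}(x):=h_{\delta^{-1}S}(\delta^{-1}x)$, so that $h_{S,\delta}=0$ on $S$, $h_{S,\delta}=1$ on $\{d(\cdot,S)\ge\delta\}$, $Dh_{S,\delta}\equiv 0$ outside $\{0<d(\cdot,S)<\delta\}$, and $\lip(h_{S,\delta})\le\kappa/\delta$. Given an $L$-Lipschitz $g\colon X\to\R$ and $\varepsilon>0$, put $\delta:=\varepsilon/L$, let $S_k:=\{x:g(x)\le k\varepsilon\}$ for $k\in\Z$ (closed, increasing, with union $X$ and empty intersection), and set $\varphi_k:=h_{S_{k-1},\delta}$. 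Since $g$ is $L$-Lipschitz, $g(x)\ge k\varepsilon$ forces $d(x,S_{k-1})\ge\varepsilon/L=\delta$; hence $\varphi_k\equiv 1$ on $\{g\ge k\varepsilon\}$ and $\varphi_k\equiv 0$ on $\{g\le(k-1)\varepsilon\}$, so $D\varphi_k(x)\ne 0$ implies $(k-1)\varepsilon<g(x)<k\varepsilon$, and in particular at each point at most one index $k$ contributes a non-zero derivative. I then define
\[
h:=\varepsilon\Big(\sum_{k\ge 1}\varphi_k+\sum_{k\le 0}(\varphi_k-1)\Big).
\]
On each ball of radius $1$ only finitely many summands are non-constant, so $h$ is a well-defined, locally finite sum of $C^{1,\alpha}$ functions and hence $h\in C^{1,\alpha}(X)$. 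Evaluating this telescoping-type sum at a point $x$ with $g(x)\in[(m-1)\varepsilon,m\varepsilon]$ gives $h(x)=\varepsilon(m-1+\varphi_m(x))$ and therefore $|h(x)-g(x)|\le\varepsilon$, so $\sup_X|g-h|\le\varepsilon$; and since at most one $D\varphi_k(x)$ is non-zero, $\|Dh(x)\|_*\le\varepsilon\,\lip(\varphi_k)\le\varepsilon\kappa/\delta=\kappa L$, whence $h$ is $\kappa L$-Lipschitz by the mean value inequality for $C^1$ real-valued functions on a normed space. This is exactly part (2) with the same constant $\kappa$.

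I expect the main obstacle to be the two design points that make the pieces fit together. First, it is essential that $Dh_S$ vanish on \emph{both} $S$ and $\{d(\cdot,S)\ge 1\}$, not merely on $S$: this is precisely what guarantees, in part (2), that the derivatives of the many bumps $\varphi_k$ are never simultaneously active, so that $h=\varepsilon\sum_k\varphi_k$ retains Lipschitz constant $\kappa L$ rather than acquiring one growing like $\kappa L/\varepsilon$ (each individual $\varphi_k$ is only $\kappa/\delta=\kappa L/\varepsilon$-Lipschitz). Second, one must verify that the $C^{1,\alpha}$ compatibility constant of the prescribed jet is bounded \emph{independently of $S$}; this is where the separation $\dist(\overline S,E)\ge 1$ together with the derivative part of the jet being identically zero collapses the Whitney condition to the trivial inequality $1\le\|x-y\|^{1+\alpha}$, and it is what makes $\kappa$ depend only on $\alpha$ and $C$.
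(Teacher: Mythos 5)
Your overall architecture is the same as the paper's: for part (1) you prescribe the jet $(0,0)$ on $\overline S$ and $(1,0)$ on $\{d(\cdot,S)\ge 1\}$, observe that the compatibility constant is bounded uniformly in $S$ because the two pieces are $1$-separated and the derivative part of the jet is identically zero, invoke the $C^{1,\alpha}$ jet-extension theorem of \cite{AzaMud21}, and truncate with a fixed scalar bump $\theta$; and for part (2) you glue a bi-infinite family of level-set bumps. (You rescale the bumps while the paper rescales $g$ to be $1$-Lipschitz and uses the bumps at unit scale; that is an inessential difference. Also you silently skip the trivial cases $S=\emptyset$ or $\{d(\cdot,S)\ge1\}=\emptyset$, which the paper disposes of explicitly.)

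There is, however, a genuine gap in part (2). You establish $\sup_X|g-h|\le\varepsilon$ and the $\kappa L$-Lipschitz bound on $h$ by exploiting that at each point at most one $D\varphi_k$ is non-zero, but the claim that $h\in C^{1,\alpha}(X)$ is dismissed with ``a locally finite sum of $C^{1,\alpha}$ functions and hence $h\in C^{1,\alpha}(X)$.'' That is not a valid general principle: a locally finite sum of $C^{1,\alpha}$ functions is locally $C^{1,\alpha}$, but globally the H\"older seminorm of the derivative can blow up (e.g.\ translated bumps with growing H\"older constants). In the statement of the theorem, $C^{1,\alpha}(X)$ means the Fr\'echet derivative is \emph{globally} $\alpha$-H\"older, so $\|Dh\|_{\dot{C}^{0,\alpha}(X,X^*)}<\infty$ must actually be proved. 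The paper devotes the bulk of the proof of part (2) exactly to this: it first shows $Dh(x)=Dh_m(x)$ on each annulus $S_m\setminus S_{m+1}$, and then, for $x,y$ in distant annuli, inserts a boundary point $z\in[x,y]$ where both relevant derivatives vanish and chains two H\"older estimates, yielding $\|Dh\|_{\dot{C}^{0,\alpha}}\le2\kappa$. The same kind of argument works in your normalization (you would get a bound like $2\kappa L^{1+\alpha}\varepsilon^{-\alpha}$, which is allowed since the statement puts no restriction on the H\"older constant of $Dh$), and the derivative-vanishing structure that you correctly identified as the key design point is precisely what makes the chaining work; but the verification has to be carried out, not asserted.
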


\begin{proof}[Proof of Theorem \ref{thm:main:approxLipssuper-reflexive}]
	\item[(1)] In the cases $S= \emptyset$ or $\lbrace \dist(\cdot, S) \geq 1 \rbrace = \emptyset,$ we simply take $h_S \equiv 1$ in the first case, and $h_S \equiv 0$ in the latter. Assume from now on that both sets are nonempty. On the set 
	$$
	E:=S \cup \lbrace x\in X \, : \, d(x,S) \geq 1 \rbrace,
	$$ 
	we define a $1$-jet $(f,G): E \to \R \times X^*$ by setting
	$$
	f(x)=0, \quad \text{if} \quad x\in S; \qquad f(x)=1, \quad \text{if} \quad d(x,S) \geq 1; \qquad G(x)=0, \quad \text{for all} \quad x\in E.
	$$
	By separating into cases, it is immediate to verify the existence of $M=M(\alpha)>0$ for which
	$$
	f(y)+G(y)(x-y) -f(z)-G(z)(x-z)  \leq   \frac{M}{1+\alpha} \left(  \|x-y\|^{1+\alpha}+ \|x-z\|^{1+\alpha} \right),
	$$
	for all $y,z \in E,$ $x\in X;$ indeed, $G(y) = G(z) = 0$ is the zero functional, and for $f(y)-f(z),$ by symmetry, it is enough to verify the case $y\in S$ and $\dist(z,S)\geq 1$ and $x\in X$ arbitrary, but this case is clear by inspection, since 
		\[
		f(y)-f(z) = 1 \leq \| y-z\|\leq  \| y-z\|^{1+\alpha}\leq \frac{M}{1+\alpha} \Big(  \|x-y\|^{1+\alpha}+ \|x-z\|^{1+\alpha} \Big).
		\] Thus, applying \cite[Theorem 4.1]{AzaMud21}, there exists a constant $\kappa_0=\kappa_0(\alpha,C)$, and a function $F: X \to \R$ of class $C^{1,\alpha},$ bounded and Lipschitz, with 
	\begin{equation}\label{eq:ExtensionProperties}
		\|F\|_\infty + \lip(F) + \|DF\|_{\dot{C}^{0,\alpha}(X,X^*)} \leq \kappa_0 M=: \kappa_1,
	\end{equation}	
and so that $F=f$ and $DF=G=0$ on $E.$ Although \cite[Theorem 4.1]{AzaMud21} is formulated for Hilbert spaces, it generalizes immediately to super-reflexive Banach spaces satisfying property \eqref{C1alpharenorming}, as pointed out in \cite[Remark 4.7]{AzaMud21}; see also \cite[Theorem 1.9]{AzaMud21}. Let us examine the extension formula $F$, since we need some of its properties.  At the same time, for the reader's convenience, we sketch some of the key steps in the proof of \cite[Theorem 4.1]{AzaMud21}. With $M$ as above, 
	$$
	M^* :=\max\Big\{ 3 \big( \|f\|_{\infty}+  \|G\|_{\infty} \big), \, \frac{M}{1+\alpha}\Big\},
	$$
	and the functions $m,g:X \to \R $ by
	\begin{align*}
		m(x) &:=\max\Big\{ -2\Big(\|f\|_\infty+ \|G\|_{\infty}\Big), \, \sup_{z\in E}\Big\{f(z)+ G(z)(x-z) -   M^*\|x-z\|^{1+\alpha} \Big\} \Big\}, \\
		g(x) &:=\min\Big\{ 2\Big(\|f\|_{\infty}+\|G\|_{\infty}\Big), \, \inf_{y\in E}\Big\{f(y)+  G(y) (x-y) + M^*\|x-y\|^{1+\alpha} \Big\}\Big\}.
	\end{align*} 
	For a suitable number $A$ (depending only on $\alpha$ and $C$), $F$ is defined to be the $A M^*t^{1+\alpha}$-\textit{strongly paraconvex envelope of} $g$, that is,
	$$
	F(x):=\sup\lbrace h(x) \, : \, h \text{ is } A M^*t^{1+\alpha}\text{-strongly paraconvex,   and   } h \leq g \text{ on } X \rbrace,\quad x\in X.
	$$
As defined in \cite{AzaMud21}, a function $h: \psi \to \R$ is $ L t^{1+\alpha}$-strongly paraconvex, for $L>0,$ if
$$
\psi(\lambda u + (1-\lambda)v)- \lambda \psi(u)-(1-\lambda)\psi(v) \leq   \lambda (1-\lambda) L \|u-v\|^{1+\alpha}, \quad u,v\in X,\, \lambda \in [0,1].
$$
By the comment just after formula \eqref{C1alpharenorming}, the function $u \mapsto \psi(u):= -\|u\|^{1+\alpha}$ is $C^{1,\alpha}$, and their Fr\'{e}chet derivative $D \psi$ satisfies $\|D \psi \|_{\dot{C}^{0,\alpha}(X,X^*)} \leq B(\alpha,C).$ Thus, using the Fundamental Theorem of Calculus and some elementary computations, $\psi$ is $A t^{1+\alpha}$-strongly paraconvex for some $A=A(\alpha,C);$ see the argument near the end of \cite[the proof of Lemma 3.6]{AzaMud21}. It turns out that then both $m$ and $(-g)$ are $A M^*t^\alpha$-strongly paraconvex. Also, $m \leq g$ on $X,$ and these properties permit to prove the regularity $F\in C^{1,\alpha}(X)$, along with \eqref{eq:ExtensionProperties} and that $(F,DF)=(f,G)$ on $E.$ See \cite[Theorems 1.9 and 4.1]{AzaMud21} for further explanations and details of these concepts and their proofs. Now, by the definition of $F,$ the pointwise relation
	$$
	m(x) \leq F(x) \leq g(x), \quad x\in X,
	$$
holds true. Since $G\equiv 0$ and $\|f\|_\infty=1,$ the definition of $m$ and $g$ and this estimate imply that $-2 \leq F \leq 2$ on $X.$  It only remains to slightly modify $F$ so that the final function $h_S$ takes values in $[0,1]. $

To do so, we pick a bump $\theta: \R \to [0,1]$ of class $C^\infty(\R) \cap C^{1,\alpha}(\R) \cap \lip(\R)$ with $\theta(t)=0$ for $t \leq 0$ and $t \geq 2,$ and $\theta(1)=1.$ Define $h_S:=\theta \circ F :X \to [0,1]. $ It is immediate that $h_S(x)=0$ for every $x\in S$, $h_S(x)=1$ if $d(x,S) \geq 1,$ and that $Dh_S=0$ on $E.$ Since $F$ is $\kappa_1$-Lipschitz, $h_S$ is $\lip(\theta) \kappa_1$-Lipschitz. Finally, using the facts that $\lip(F) + \|DF\|_{\dot{C}^{0,\alpha}(X,X^*)} \leq \kappa_1$ and that $\lip(\theta) + \|\theta'\|_{\dot{C}^{0,\alpha}(\R)} \leq c(\alpha)$ for some $c(\alpha)>0,$ it is an easy exercise to verify that
$$
\|Dh_S\|_{\dot{C}^{0,\alpha}(X,X^*)} \leq \|\theta'\|_{\dot{C}^{0,\alpha}(\R)} \lip(F)^\alpha + \lip(\theta) \| DF\|_{\dot{C}^{0,\alpha}(X,X^*)} \leq \kappa(\alpha,\kappa_1).
$$ 
Therefore, we can find $\kappa=\kappa(\alpha,C)$ for which $h_S$ satisfies all the properties stated in (1).

	\medskip
	\item[(2)] Let $g:X \to \R$ be $L$-Lipschitz, and $\varepsilon>0.$ We will now apply part (1) for a suitable sequence of sets to obtain the desired approximation. In order to do so, we use the same construction as in \cite[Proposition 1]{HajJoh10}, replacing the $C^1$-\textit{separating functions} from there with those we just obtained in part (1).

	Define $\tilde{g}(x) = \varepsilon^{-1} g( \varepsilon x /L),$ for all $x\in X,$ and note that $\tilde{g}$ is $1$-Lipschitz. Now, we define the set $S_n= \lbrace x\in X\, : \,\tilde{g}(x) \geq n \rbrace,$ for each integer $n \in \Z.$ Note that $S_{n+1} \subset S_n$, and also, because $\tilde{g}$ is $1$-Lipschitz, one has that $d(X\setminus S_n , S_{n+1} ) \geq 1$ for every $n\in \Z.$ Now, for each subset $S_n,$ let $h_{S_{n}}: X \to [0,1]$ be the function from (1). Define now $h_n = 1- h_{S_{n+1}}$ for every $n$, 
	and also
	$$
	h(x)= \sum_{n=0}^\infty h_n(x) + \sum_{n=-\infty}^{-1} (h_n(x)-1), \quad x\in X.
	$$
Each $h_{n}$ is $C^1$ and $\kappa $-Lipschitz, $h_{n}=1$ on $S_{n+1},$ and $h_{n}(x)=0$ if $d(x, S_{n+1}) \geq 1.$ As proven in \cite[Proposition 1]{HajJoh10}, $h: X \to \R$ is a well-defined $\kappa $-Lipschitz and $C^1$ function, with $\sup_X |h-\tilde{g}| \leq 1.$ Moreover, the sums defining $h$ are locally finite, meaning that for every $x\in X,$ there exists $N_x\in \N,$ and a ball $B(x,r_x)$ so that
\begin{equation}\label{eq:difftermbyterm}
h(z)= \sum_{n=0}^{\max \{|m|,N_x \}} h_n(z) + \sum_{n=-\max\{ |m|, N_x \}}^{-1} (h_n(z)-1), \quad z\in B(x,r_x),\, m\in \Z.
\end{equation}
The relation \eqref{eq:difftermbyterm} will allow us to differentiate the sums defining $h$ term by term locally around every $x\in X.$

	Let us now prove that $h\in C^{1,\alpha}(X)$ with $\|Dh\|_{\dot{C}^{0,\alpha}(X,X^*)} \leq 2\kappa.$ Recalling that by definition $Dh_{S_{n+1}}(x)=0$ whenever $x\in S_{n+1} \cup \lbrace x\in X \, : \, d(x,S_{n+1}) \geq 1 \rbrace$, it is immediate from the definition  $h_n := 1- h_{S_{n+1}}$ that
	$$
	Dh_n=0 \quad \text{on} \quad S_{n+1} \cup \lbrace x \in X \, : \, d(x,S_{n+1}) \geq 1 \rbrace, \quad n\in \Z.
	$$
	Note that this implies $Dh_n=0$ on $X\setminus S_n,$ as $d(X\setminus S_n , S_{n+1}) \geq 1.$ These observations together with the continuity of $Dh_n$ give
	\begin{equation}\label{derivativeNullboundary}
		Dh_n=0 \quad \text{on} \quad S_{n+1} \cup \overline{X\setminus S_n}, \quad n \in \Z.
	\end{equation} 
Also, we claim that
	\begin{equation}\label{derivativeontheannuli}
		Dh(x)=Dh_m(x), \quad \text{whenever} \quad x\in S_m \setminus S_{m+1}, \quad m \in \Z.
	\end{equation}
	Indeed, let $x\in S_m \setminus S_{m+1}$. For those $n \geq m+1,$ we have $x\notin S_n $ and thus $Dh_n(x)=0,$ e.g., by virtue of \eqref{derivativeNullboundary}. And for those $n \leq m-1, $ we have $x\in S_{n+1}$ and so $Dh_n(x)=0.$ In other words, $Dh_n(x)=0$ for $n \neq m.$ It then follows by \eqref{eq:difftermbyterm} and evaluating at $x\in S_m \setminus S_{m+1}$ that
	$$
	Dh(x)=\sum_{n=0}^{\max \{|m|,N_x \}} D h_n(x) + \sum_{n=-\max \{|m|,N_x \}}^{-1} Dh_n(x) = Dh_m(x).
	$$
This proves the claim \eqref{derivativeontheannuli}.

	Now given $x,y \in X,$ let $m \in \Z$ and $l \in \N \cup \lbrace 0 \rbrace$ be so that $x\in S_m \setminus S_{m+1} $ and $y\in S_{m+l} \setminus S_{m+l+1},$ and we show the $\alpha$-H\"older estimate for $\|Dh(x)-Dh(y)\|_*.$

	In the case $l=0,$ we have $x,y\in S_m \setminus S_{m+1},$ and so it suffices to apply \eqref{derivativeontheannuli} and the fact that $\|Dh_m\|_{\dot{C}^{0,\alpha}(X,X^*)} \leq \kappa.$

	Assume now $l \geq 1$. By the connectedness of $[x,y]$ and $x\not\in S_{m+l}$ and $y\in S_{m+l}$, the segment $[x,y]$ must intersect the boundary of $S_{m+l}.$ Let $z\in [x,y] \cap  S_{m+l} \cap \overline{X \setminus S_{m+l}}$. In particular, $z\in  \overline{X \setminus S_{m+l}},$ and by \eqref{derivativeNullboundary}, this implies $Dh_{m+l}(z)=0.$ Also, observe that $y\in S_{m+l} \subset S_{m+1},$ and so $Dh_m(y)=0,$ again by \eqref{derivativeNullboundary}. Using first \eqref{derivativeontheannuli}, then $Dh_m(y)=Dh_{m+l}(z)=0$, and finally that $\|Dh_n\|_{\dot{C}^{0,\alpha}(X,X^*)} \leq \kappa $ for all $n \in \Z,$ we conclude that
	\begin{align*}
		\|Dh(x)-Dh(y)\|_* & = \|Dh_m(x)-Dh_{m+l}(y)\|_* \\
		& \leq \|Dh_m(x)-Dh_m(y)\|_* + \|Dh_{m+l}(z)-Dh_{m+l}(y)\|_* \\
		& \leq \kappa  \left( \|x-y\|^{\alpha} + \|z-y\|^{\alpha} \right) \leq 2 \kappa  \|x-y\|^{\alpha}.
	\end{align*}
By symmetry, we can swap the roles of $x$ and $y,$ and thus we have shown that $h\in C^{1,\alpha}(X).$ 

Finally, rescaling the function $h$ by $\tilde{h}(x)= \varepsilon h( xL/\varepsilon)$ and noting that $g(x) = \varepsilon \tilde{g}( xL/\varepsilon),$ we see that $\tilde{h} \in C^{1,\alpha}(X)$ with $\sup_X |\tilde{h}-g| \leq \varepsilon$ and $\lip(\tilde{h}) \leq L \lip(h) \leq \kappa L.$
\end{proof}

We will use the following elementary fact.
\begin{remark}\label{remark:easyproperty}
	{\em If $U,V$ are normed spaces, and $\psi: U \to V$ is Lipschitz and bounded, then $\psi$ is $\alpha$-H\"older for every $\alpha \in (0,1].$}
\end{remark}

A consequence of Theorem \ref{thm:main:approxLipssuper-reflexive}, in combination with our results from previous sections, is the following $C^{1,\alpha}\cap\op{Lip}$ approximation of $\VC^{0,\omega}$ functions in super-reflexive spaces. Note that, unlike in Theorem \ref{thm:main:ApproxSeparable}, $X$ may be nonseparable. 

\begin{theorem}[Approximation in super-reflexive spaces]\label{ThmApproxsuper-reflexive}
	Let $X$ be a super-reflexive space, and let $\alpha \in (0,1]$ and $C>0$ be so that a renorming of $\| \cdot \|$ of $X$ satisfies \eqref{C1alpharenorming}. Then the following hold: 
	\begin{itemize}
		\item[(i)]  $  \overline{C^{1,\alpha}(X) \cap \mathrm{Lip}(X) \cap \dot{C}^{0,\omega}(X)}^{\dot{C}^{0,\omega}(X)} = \VC^{0,\omega}_{\op{small}}(X),$ 
		\item[(ii)] $  \overline{C^{1,\alpha}_{\op{bs}}(X)}^{\dot{C}^{0,\omega}(X)} =\VC^{0,\omega}(X). $
	\end{itemize}
	
\end{theorem}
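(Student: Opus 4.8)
\emph{Overall strategy.} In both parts the containment of the closure inside the vanishing space is the easy direction: by \eqref{eq:mod:coer1}, every Lipschitz function lying in $\dot C^{0,\omega}(X)$ already belongs to $\VC^{0,\omega}_{\op{small}}(X)$, every Lipschitz function of bounded support belongs to $\VC^{0,\omega}(X)$, and any $h\in C^{1,\alpha}_{\op{bs}}(X)$ is such a function, since $Dh$ is $\alpha$-H\"older and vanishes off a ball, hence is bounded, so $h$ is Lipschitz by the mean value inequality. As $\VC^{0,\omega}_{\op{small}}(X)$ and $\VC^{0,\omega}(X)$ are $\dot C^{0,\omega}$-closed by Remark~\ref{remark:VCareclosedsubspaces}, this settles ``$\subseteq$'' in (i) and (ii). The substance is the reverse inclusion, and the engine is Theorem~\ref{thm:main:approxLipssuper-reflexive}(2), which turns a uniform Lipschitz approximation into a uniform $C^{1,\alpha}$ one with controlled Lipschitz constant, combined with the uniform-to-$\dot C^{0,\omega}$ upgrade of Lemma~\ref{lem:Lip-Comeg:Approx}.

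\emph{Part (i).} Given $f\in\VC^{0,\omega}_{\op{small}}(X)$ and $\varepsilon>0$, I would first use Lemma~\ref{lem:VCsLipApprox} to pick $g\in\op{Lip}(X)$ with $\|f-g\|_{\dot C^{0,\omega}(X)}\le\varepsilon$ (so $g\in\dot C^{0,\omega}(X)$ as well), and set $L=\op{Lip}(g)$. For each $n$, apply Theorem~\ref{thm:main:approxLipssuper-reflexive}(2) to obtain $g_n\in C^{1,\alpha}(X)$ that is $\kappa L$-Lipschitz with $\sup_X|g-g_n|\le 1/n$. Then $g_n\to g$ uniformly and $\limsup_n\op{Lip}(g_n)\le\kappa L<\infty$, so Lemma~\ref{lem:Lip-Comeg:Approx} gives $\|g_n-g\|_{\dot C^{0,\omega}(X)}\to 0$; in particular $g_n\in\dot C^{0,\omega}(X)$ for $n$ large, hence $g_n\in C^{1,\alpha}(X)\cap\op{Lip}(X)\cap\dot C^{0,\omega}(X)$, and $\|f-g_n\|_{\dot C^{0,\omega}(X)}<2\varepsilon$ once $n$ is large. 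This places $f$ in the closure and proves (i).

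\emph{Part (ii).} Given $f\in\VC^{0,\omega}(X)$ and $\varepsilon>0$, Theorem~\ref{thm:main:LipApprox}(ii) provides $g\in\op{Lip}_{\op{bs}}(X)$ with $\|f-g\|_{\dot C^{0,\omega}(X)}\le\varepsilon$; say $g\equiv 0$ off $B(0,R)$ and $L=\op{Lip}(g)$, so $\sup_X|g|<\infty$. Apply Theorem~\ref{thm:main:approxLipssuper-reflexive}(2) to get $G_n\in C^{1,\alpha}(X)$, $\kappa L$-Lipschitz, with $\sup_X|g-G_n|\le 1/n$; since these need not have bounded support, I would truncate, in parallel with Lemma~\ref{lem:SmoothLipBsApprox}. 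Apply Theorem~\ref{thm:main:approxLipssuper-reflexive}(1) to $S=\{x:\|x\|\ge R+2\}$, for which $d(x,S)=\max(0,R+2-\|x\|)$; this yields a cutoff $\varphi:X\to[0,1]$ of class $C^{1,\alpha}(X)\cap\op{Lip}(X)$ with $\varphi\equiv 1$ on $B(0,R+1)$, $\varphi\equiv 0$ off $B(0,R+2)$, and $\op{Lip}(\varphi)+\|D\varphi\|_{\dot C^{0,\alpha}(X,X^*)}\le\kappa$. Set $h_n:=\varphi\,G_n$. Each $h_n$ has bounded support, and $Dh_n=\varphi\,DG_n+G_n\,D\varphi$ is $\alpha$-H\"older: $DG_n$ and $D\varphi$ are $\alpha$-H\"older and bounded (by $\kappa L$ and $\kappa$ respectively), $\varphi$ and $G_n$ are bounded (the latter because $\sup_X|G_n|\le\sup_X|g|+1$), and a product of bounded $\alpha$-H\"older maps is again $\alpha$-H\"older; thus $h_n\in C^{1,\alpha}_{\op{bs}}(X)$. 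Moreover $\sup_X|g-h_n|\le 1/n$ (it equals $|g-G_n|$ where $\varphi\equiv1$; on the shell $R+1\le\|x\|\le R+2$ one has $g=0$ and $|h_n|\le|G_n|\le 1/n$; and both vanish off $B(0,R+2)$), while $\op{Lip}(h_n)\le\kappa L+\kappa(\sup_X|g|+1)$ uniformly in $n$. Lemma~\ref{lem:Lip-Comeg:Approx} then gives $\|h_n-g\|_{\dot C^{0,\omega}(X)}\to 0$, so $\|f-h_n\|_{\dot C^{0,\omega}(X)}<2\varepsilon$ for $n$ large, proving (ii).

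\emph{Expected main difficulty.} The genuinely hard analysis has already been carried out inside Theorem~\ref{thm:main:approxLipssuper-reflexive}---in particular the construction of globally $C^{1,\alpha}$ and globally Lipschitz, set-adapted bumps with absolutely controlled constants, via the $C^{1,\omega}$-jet extension theorem of \cite{AzaMud21}. Relative to that input, the only delicate point above is verifying that the bounded-support truncation $\varphi\,G_n$ preserves the \emph{global} $C^{1,\alpha}$ regularity, which follows at once from Remark~\ref{remark:easyproperty} (bounded Lipschitz maps are $\alpha$-H\"older) together with the stability of bounded $\alpha$-H\"older maps under products; everything else is bookkeeping with the lemmas of this subsection, exactly as in the proof of Theorem~\ref{thm:main:ApproxSeparable} but with ``$C^{1,\alpha}$'' in place of ``$C^k$''.
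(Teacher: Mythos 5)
Your proof is correct and follows essentially the same route as the paper: Lemma~\ref{lem:VCsLipApprox} (resp.\ Theorem~\ref{thm:main:LipApprox}) to pass to a Lipschitz (resp.\ Lipschitz with bounded support) approximant, Theorem~\ref{thm:main:approxLipssuper-reflexive}(2) for a uniform $C^{1,\alpha}\cap\op{Lip}$ approximation with controlled Lipschitz constants, a truncation by a $C^{1,\alpha}\cap\op{Lip}$ bump from Theorem~\ref{thm:main:approxLipssuper-reflexive}(1), and the upgrade to $\dot C^{0,\omega}$ convergence via Lemma~\ref{lem:Lip-Comeg:Approx}. The only differences from the paper's argument are cosmetic (the specific radii in the cutoff, and writing out explicitly the uniform/Lipschitz estimates that the paper delegates to the proof of Lemma~\ref{lem:SmoothLipBsApprox}); the verification that $Dh_n$ stays $\alpha$-H\"older via Remark~\ref{remark:easyproperty} and products of bounded $\alpha$-H\"older maps is exactly what the paper does.
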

Notice that the modulus $\omega$ is independent of $\alpha.$ Also,  the approximations in part (ii) are Lipschitz. Indeed, as the derivative is $\alpha$-H\"older and boundedly supported, it is in particular bounded and this implies that the function is Lipschitz.

\begin{proof}
	\item[(i)] Let $f\in \VC^{0,\omega}_{\op{small}}(X)$ and $\varepsilon>0.$ Applying Lemma \ref{lem:VCsLipApprox}, we can find a function $g\in \lip(X)$ so that $\|f-g \|_{\dot{C}^{0,\omega}(X)} \leq \varepsilon.$ Now, by Theorem \ref{thm:main:approxLipssuper-reflexive}(2), there is a sequence $(g_n)$ of $\op{Lip}(X)\cap C^{1,\alpha}(X)$ functions with $\sup_n \lip(g_n) \leq \kappa \lip(g) ,$ and $\lim_n   \sup_X |g-g_n| =0.$ Therefore Lemma \ref{lem:Lip-Comeg:Approx} yields that $\lim_n \|g_n-g\|_{\dot{C}^{0,\omega}(X) } = 0,$ and taking some $g_n$ from the sequence with $ \|g_n-g\|_{\dot{C}^{0,\omega}(X) } \leq \varepsilon,$ we conclude the inclusion $``\supset "$ of $(i).$ The reverse inclusion follows by noting that $\op{Lip}(X) \cap \dot{C}^{0,\omega}(X) \subset \VC^{0,\omega}_{\op{small}}(X),$ by the condition \eqref{eq:mod:coer1}, and that $\VC^{0,\omega}_{\op{small}}(X)$ is a closed subspace of $\dot{C}^{0,\omega}(X).$

	\item[(ii)] Let $f\in \VC^{0,\omega}(X)$ and $\varepsilon>0.$ By Theorem \ref{thm:main:LipApprox} we find $g \in \lip(X)$ with bounded support so that $\|f-g \|_{\dot{C}^{0,\omega}(X)} \leq \varepsilon.$ With $R>0$ so that $g(x)=0$ for $\| x \|\geq R,$ we use Theorem \ref{thm:main:approxLipssuper-reflexive}(1) to obtain a $\op{Lip}(X)\cap C^{1,\alpha}(X)$ function $\varphi: X \to [0,1]$ with $\varphi=1$ on $B(0,2R)$ and $\varphi=0$ on $X \setminus B(0,2R+1).$ If $(g_n)_n $ is the approximating function for $g$ from part (i), we define $h_n= \varphi g_n.$ Obviously each $h_n \in C^1(X)$ and also has bounded support, and moreover the properties $\|h_n-g\|_\infty \to 0$ and $\limsup_n \lip(h_n) < \infty$ hold and are checked exactly as in the proof of Lemma \ref{lem:SmoothLipBsApprox}, see the line \eqref{eq:laterREF}. So, it only remains to verify that $h_n$ has $\alpha$-H\"older derivative, at least, for $n$ large enough. Write,
	$$
	Dh_n(x)= g_n(x)D \varphi(x) + \varphi(x) Dg_n(x) 
	$$
	and let us prove that both terms define an $\alpha$-H\"older function. 
	
	For the first function $x\mapsto g_n(x) D \varphi(x) $, let us observe that, for $n$ large enough, one has $\sup_X  |g_n | \leq 1+ \sup_X |g|,$ as $g_n $ converges uniformly on $X$ to $g.$ Hence, by Remark \ref{remark:easyproperty}, $g_n$ is $\alpha$-H\"older continuous on $X.$ Then, we write
		\begin{align*}
		\|g_n(x) D \varphi(x) & - g_n(z) D \varphi(z)\|_*  \\
		& \leq |g_n(x)| \| D \varphi(x)- D\varphi(z) \|_* +  | g_n(x)-g_n(z)| \|D \varphi(z)\|_* \\
		& \leq  \sup_X  |g_n | \| D \varphi \|_{\dot{C}^{0,\alpha}(X,X^*)} \|x-z\|^\alpha + \| g_n \|_{\dot{C}^{0,\alpha}(X,\R)} \|x-z\|^\alpha  \sup_X \|D \varphi\|_*.
\end{align*}
All the factors that multiply the term $\|x-z\|^\alpha$ are finite by the properties of $\varphi$ and $g_n.$

	As concerns the second function $x\mapsto \varphi(x) Dg_n(x) $, its $\alpha$-H\"older continuity is verified in a very similar way, this time using Remark \ref{remark:easyproperty} for $\varphi$ and that $Dg_n$ is bounded (as $g_n$ is Lipschitz) and $\alpha$-H\"older continuous.

	We conclude that $(h_n)_n \to g$ uniformly, with $(h_n)_n \in C^{1,\alpha}_{\op{bs}}(X)$ and $\limsup_n \lip(h_n) < \infty.$ By Lemma \ref{lem:Lip-Comeg:Approx}, we have $\|h_n-g\|_{\dot{C}^{0,\omega}(X)} \to 0$ as well. Therefore, we can find $h\in C^{1,\alpha}_{\op{bs}}(X)  $ with $\|h-g\|_{\dot{C}^{0,\omega}(X)} \leq \varepsilon,$ and thus $\|h-f\|_{\dot{C}^{0,\omega}(X)} \leq 2\varepsilon.$

	We have shown the inclusion $\VC^{0,\omega}(X) \subset \overline{C^{1,\alpha}_{\op{bs}}(X)}^{\dot{C}^{0,\omega}(X)}.$ For the reverse inclusion, observe that if $f\in C^{1,\alpha}_{\op{bs}}(X),$ then $Df: X \to X^*$ is $\alpha$-H\"older and with bounded support, and so $Df$ is bounded in $X.$ Because Lipschitz functions with bounded support are contained in $\VC^{0,\omega}(X),$ which is closed under limits with respect to the $\dot{C}^{0,\omega}(X)$ seminorm, the reverse inclusion holds.  
\end{proof}

\begin{remark}\label{rem:smoothnessL^psuper}
	{\em We exhibit some examples of classical spaces that are covered by Theorem \ref{ThmApproxsuper-reflexive}.
		
		\begin{enumerate}
			\item If $X=L^p,$ for $1<p< \infty,$ then \eqref{C1alpharenorming} holds for $\alpha_p=p-1$ when $p \leq 2$ and for $\alpha_p=1$ when $p \geq 2$; for a proof see \cite[Corollary V.1.2]{DevilleGodefroyZizler}. By Theorem \ref{ThmApproxsuper-reflexive}, this gives rise to approximations of $\VC^{0,\omega}_{\op{small}}(X)$ and $\VC^{0,\omega}(X)$ by Lipschitz functions of class $C^{1,\alpha_p}(X)$. 
				\item If $X$ is a Hilbert space (separable or not), then \eqref{C1alpharenorming} holds as an identity for $\alpha=1$ and  $C=2$  (the parallelogram law), and so Theorem \ref{ThmApproxsuper-reflexive} gives $C^{1,1}$ approximations:
				\begin{itemize}
					\item[(i)]  	$\VC^{0,\omega}_{\op{small}}(X)=\overline{C^{1,1}(X)\cap \lip(X) \cap \dot{C}^{0,\omega}(X) }^{\dot{C}^{0,\omega}(X)},$
					\item[(ii)]  	 $\VC^{0,\omega}(X)= \overline{C^{1,1}_{\op{bs}}(X)}^{\dot{C}^{0,\omega}(X)}.$
			\end{itemize}
		\end{enumerate}
	}
\end{remark}

\medskip

\subsection{Approximation of Banach-valued mappings from $c_0$}\label{subsect:C0GamApprox} 
For an arbitrary set of indices $\mathcal{A},$ the space $c_0(\mathcal{A})$ consists of those elements $x=(x_\alpha)_{\alpha \in \mathcal{A}}\in\R^{\mathcal{A}}$ such that for every $\varepsilon>0,$ there exists a finite subset $S = S_x\subset \mathcal{A}$ so that $|x_\alpha| \leq \varepsilon,$ whenever $\alpha \in \mathcal{A} \setminus S.$  
We equip $c_0(\mathcal{A})$ with the norm $\|(x_\alpha)_{\alpha \in \mathcal{A}}\|_\infty:= \sup_{\alpha \in \mathcal{A}} | x_\alpha|$ and this results in a Banach space. 

Given $x=(x_\alpha)_{\alpha \in \mathcal{A}}$ and $\alpha\in\mathcal{A},$ we denote the projection $(x)_{\beta}  = ((x_\alpha)_{\alpha \in \mathcal{A}})_{\beta} := x_{\beta}\in\R;$  and for every finite subset $S \subset \mathcal{A}, $ we denote
$$
P_S: c_0(\mathcal{A}) \to c_{00}(\mathcal{A}),\qquad P_S(x)= \sum_{\alpha \in S} x_\alpha e_\alpha,
$$
where $e_\alpha$ is the element of $c_0(\mathcal{A})$ that satisfies $(e_\alpha)_{\beta}=\delta_{\alpha,\beta},$ for every $\beta \in \mathcal{A}.$

\begin{theorem}[Approximation in $c_0(\mathcal{A})$]\label{thm:main:c0YApprox}
	For an arbitrary set of indices $\mathcal{A},$ let $X=c_0(\mathcal{A}),$ and let $Y$ be a Banach space. Then, the following hold:
	\begin{itemize}
		\item[(i)] $\overline{C^{\infty}(X,Y)\cap \VC^{0,\omega}_{\op{small}}(X,Y)}^{\dot{C}^{0,\omega}(X,Y)}= \VC^{0,\omega}_{\op{small}}(X,Y),$
		\item[(ii)] $\overline{C^{\infty}_{\op{bs}}(X,Y)\cap \VC^{0,\omega}(X,Y)}^{\dot{C}^{0,\omega}(X,Y)}=\VC^{0,\omega}(X,Y).$
	\end{itemize}
	Moreover, in the particular case $Y=\R,$ the approximation can be taken to be Lipschitz:
	\begin{itemize}
	\item[(i)] $\overline{C^{\infty}(X)\cap \lip(X) \cap \dot{C}^{0,\omega}(X)}^{\dot{C}^{0,\omega}(X)} = \VC^{0,\omega}_{\op{small}}(X),$
	\item[(ii)] $  \overline{C^{\infty}_{\op{bs}}(X)\cap \lip(X)}^{\dot{C}^{0,\omega}(X)} = \VC^{0,\omega}(X).$
\end{itemize}
\end{theorem}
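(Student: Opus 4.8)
The plan is to treat both items by first peeling off the easy inclusions, then reducing to a uniformly continuous, boundedly supported function, and finally smoothing it by a partition of unity adapted to the structure of $c_0(\mathcal A)$. The reverse inclusions ``$\subseteq$'' in (i) and (ii) are immediate from Remark \ref{remark:VCareclosedsubspaces}: indeed $C^{\infty}(X,Y)\cap\VC^{0,\omega}_{\op{small}}(X,Y)\subseteq\VC^{0,\omega}_{\op{small}}(X,Y)$ and $C^{\infty}_{\op{bs}}(X,Y)\cap\VC^{0,\omega}(X,Y)\subseteq\VC^{0,\omega}(X,Y)$, and both right-hand sides are $\dot C^{0,\omega}$-closed. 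The substance is the inclusion ``$\supseteq$''. For (ii), Theorem \ref{thm:approx:bs} reduces the task to approximating a given $g\in\VC^{0,\omega}_{\op{small}}(X,Y)$ with bounded support; for (i) we take $g:=f\in\VC^{0,\omega}_{\op{small}}(X,Y)$ directly. In either case $g$ is uniformly continuous (since $\omega(t)\to 0$ as $t\to 0$); write $\omega_g(t):=\sup\{|g(x)-g(y)| : \|x-y\|\le t\}$ for its modulus of continuity and $\psi(t):=t/\omega(t)$, so $\psi(t)\to 0$ as $t\to 0$.

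The core construction is a smoothing operator coming from a $C^{\infty}$ partition of unity on $c_0(\mathcal A)$. Given a scale $\rho>0$, we use a $C^{\infty}$ partition of unity $\{\varphi_j\}_j$ subordinate to a cover of $c_0(\mathcal A)$ by sets of diameter $\le 2\rho$, with the uniform derivative control $\sup_{x}\sum_j\|D\varphi_j(x)\|_*\le K/\rho$ for an absolute constant $K$; the existence of such partitions of unity on $c_0(\mathcal A)$, for an arbitrary and possibly uncountable index set $\mathcal A$, is classical and rests on $c_0(\mathcal A)$ admitting a $C^{\infty}$ bump (see \cite{DevilleGodefroyZizler}). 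Choosing $y_j\in\op{supp}\varphi_j$, set $g_\rho:=\sum_j\varphi_j\, g(y_j)$. Then $g_\rho\in C^{\infty}(X,Y)$ as a locally finite sum; $g_\rho$ has bounded support whenever $g$ does (since $g(y_j)=0$ once $y_j\notin\op{supp}g$, while $\op{supp}\varphi_j$ has small diameter); and, using $\sum_jD\varphi_j\equiv 0$ and $\|y_j-x\|\le 2\rho$ whenever $D\varphi_j(x)\ne 0$,
\[
\|Dg_\rho(x)\|\le\sum_j\|D\varphi_j(x)\|_*\,|g(y_j)-g(x)|\le\frac{K}{\rho}\,\omega_g(2\rho),
\]
so $g_\rho$ is Lipschitz and $\|g-g_\rho\|_\infty\le\omega_g(2\rho)\to 0$ as $\rho\to 0$.

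It remains to estimate $\|g-g_\rho\|_{\dot C^{0,\omega}(X,Y)}$. Fix $\varepsilon>0$ and choose $\delta>0$ with $\|g\|_{\dot C^{0,\omega}_\delta}\le\varepsilon$ (possible since $g\in\VC^{0,\omega}_{\op{small}}$). At scales $\|x-x'\|\ge\delta$ we use $|(g-g_\rho)(x)-(g-g_\rho)(x')|\le 2\|g-g_\rho\|_\infty\le 2\omega_g(2\rho)$, which is $\le\varepsilon\,\omega(\delta)\le\varepsilon\,\omega(\|x-x'\|)$ once $\rho$ is small ($\omega$ non-decreasing, $\omega_g(2\rho)\to 0$). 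At scales $\|x-x'\|<\delta$ we split the difference: the $g$-part is $\le\|g\|_{\dot C^{0,\omega}_\delta}\,\omega(\|x-x'\|)\le\varepsilon\,\omega(\|x-x'\|)$, while for the $g_\rho$-part the cases $\rho<\|x-x'\|<\delta$ and $\|x-x'\|\le\rho$ are handled by $|g_\rho(x)-g_\rho(x')|\le 2\omega_g(2\rho)+|g(x)-g(x')|$ and by the derivative bound, respectively. In the last case one finds, assuming $2\rho\le\delta$ (so $\omega_g(2\rho)\lesssim\varepsilon\,\omega(\rho)$ and $\omega(2\rho)\lesssim\omega(\rho)$), that
\[
|g_\rho(x)-g_\rho(x')|\le\frac{K}{\rho}\,\omega_g(2\rho)\,\|x-x'\|\lesssim\varepsilon\,\frac{\psi(\|x-x'\|)}{\psi(\rho)}\,\omega(\|x-x'\|).
\]
Here is the main obstacle: $\psi(\|x-x'\|)/\psi(\rho)$ is \emph{not} bounded by an absolute constant for a generic modulus, since $\psi$ is only assumed to tend to $0$ at $0$ and may fail to be monotone. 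The remedy is to choose $\rho$ to be a near-maximiser of the running supremum of $\psi$: because $\psi(t)\to 0$ as $t\to 0$, for every $\rho_0>0$ there is $\rho\le\rho_0$ with $\psi(\rho)\ge\tfrac12\sup_{0<t\le\rho}\psi(t)$, and then $\psi(\|x-x'\|)/\psi(\rho)\le 2$ whenever $\|x-x'\|\le\rho$. Taking such a $\rho$ small enough that also $2\rho\le\delta$ and $\omega_g(2\rho)\le\varepsilon\,\omega(\delta)$, the estimates combine to $\|g-g_\rho\|_{\dot C^{0,\omega}(X,Y)}\lesssim\varepsilon$; the derivative bound further gives $\|g_\rho\|_{\dot C^{0,\omega}_\eta}\to 0$ as $\eta\to 0$ (because $\psi(\eta)\to 0$), so $g_\rho\in\VC^{0,\omega}_{\op{small}}(X,Y)$, and $g_\rho\in\VC^{0,\omega}(X,Y)$ when it additionally has bounded support (as observed in the proof of Theorem \ref{thm:approx:bs}). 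This proves ``$\supseteq$'' in (i) and (ii).

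For the final assertion, note that the $g_\rho$ constructed above are already Lipschitz, and boundedly supported when $g$ is; hence in the case $Y=\R$ they furnish the claimed Lipschitz (and, in (ii), boundedly supported) refinement. Alternatively, when $Y=\R$ one may run the shorter route: Lemma \ref{lem:VCsLipApprox} to reduce to a Lipschitz function in the $\dot C^{0,\omega}$ seminorm, then uniform approximation of Lipschitz functions on $c_0(\mathcal A)$ by $C^{\infty}\cap\op{Lip}$ functions with controlled Lipschitz constants (again via the partition of unity, or via \cite{HajJoh10}), then Lemma \ref{lem:Lip-Comeg:Approx}, and Lemma \ref{lem:SmoothLipBsApprox} for bounded supports in part (ii), which applies since $c_0(\mathcal A)$ carries a $C^{\infty}\cap\op{Lip}$ bump.
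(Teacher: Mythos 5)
Your approach is genuinely different from the paper's, and it has one real gap plus one nice idea that deserves mention.

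The paper does not use a smooth partition of unity. Instead it runs the Hájek--Johanis finite-dimensional scheme from \cite{HajJoh09}: Lemma \ref{lem:preliminaryforc0} replaces $f$ by a $g$ that, around every $x$, factors through a finite coordinate projection $P_{S_x}$ on a ball $B(x,r)$ of fixed radius; then $g$ is mollified in only finitely many coordinates at a time via the expression \eqref{eq:expressionc0mollification}, and the local pieces $h_{S_x}$ glue to a global $h\in C^\infty(X,Y)$ because $h_F=h_{S_x}$ on $B(x,r)$ whenever $F\supset S_x$. The crucial payoff of using mollification (rather than weighted averages over a partition of unity) is the translation-invariance estimate \eqref{eq:c0verificationVCsmall}, which gives $\|h\|_{\dot C^{0,\omega}(X,Y)}\le \|g\|_{\dot C^{0,\omega}(X,Y)}$ exactly and makes the seminorm error estimate essentially free at all scales, with no derivative bounds or modulus-regularity issues entering at all.

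The gap in your argument is the input that $c_0(\mathcal A)$, for an arbitrary (possibly uncountable) $\mathcal A$, admits $C^\infty$ partitions of unity subordinate to covers at scale $\rho$ with the \emph{uniform pointwise bound}
\[
\sup_{x\in X}\sum_j \|D\varphi_j(x)\|_* \le K/\rho.
\]
You justify this by saying it ``rests on $c_0(\mathcal A)$ admitting a $C^\infty$ bump'' and cite \cite{DevilleGodefroyZizler}. That is not accurate: the existence of a $C^\infty$ bump (plus, e.g., Toruńczyk's theorem in the separable case) gives $C^\infty$ partitions of unity, but says nothing about the quantitative control on $\sum_j\|D\varphi_j(x)\|_*$; that requires both uniform Lipschitz bounds on the individual $\varphi_j$ \emph{and} uniformly bounded multiplicity, which is a genuinely stronger structural property of $X$ than the existence of a bump. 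Such ``Lipschitz partitions of unity'' do exist on $c_0(\mathcal A)$, but this is a nontrivial result in its own right (cf. the treatment of Lipschitz partitions of unity in \cite{HajJoh10}); as written your citation does not support the claim. Since this is precisely the technical heart of your construction, it needs an explicit reference to the quantitative statement, or a direct construction.

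Apart from that, the rest of your argument is sound, and your handling of the seminorm estimate at scales $\le\rho$ contains a genuine idea the paper does not need: since $\psi(t)=t/\omega(t)$ is not assumed monotone, the ratio $\psi(\|x-x'\|)/\psi(\rho)$ could a priori be large, and your fix of choosing $\rho$ to be a near-maximizer of the running supremum of $\psi$ (possible because $\psi(t)\to 0$) is exactly right. This is a good way to make partition-of-unity smoothing compatible with general doubling moduli. The paper's mollification route sidesteps the issue entirely via \eqref{eq:c0verificationVCsmall}; yours solves it head-on, at the price of needing the derivative-controlled partition of unity.

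Your alternative route for the Lipschitz refinement when $Y=\R$ (Lemma \ref{lem:VCsLipApprox}, then a uniform $C^\infty\cap\op{Lip}$ approximation with controlled Lipschitz constants, then Lemmas \ref{lem:Lip-Comeg:Approx} and \ref{lem:SmoothLipBsApprox}) is closer to the method of Theorem \ref{thm:main:ApproxSeparable} than to what the paper actually does here, and it is also limited: \cite[Corollary 15]{HajJoh10} is a separable-space result, so for uncountable $\mathcal A$ you would again need the nonseparable $c_0(\mathcal A)$-specific construction. The paper's own Lipschitz refinement simply notes that the mollification $h_F$ in \eqref{eq:expressionc0mollification} preserves Lipschitz constants, $\lip(h)\le\lip(g)$, which works for all $\mathcal A$.
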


One step in our proof of Theorem \ref{thm:main:c0YApprox} will rely on the construction by H\'{a}jek and Johanis \cite[Theorem 1]{HajJoh09} (stated for Lipschitz functions), to obtain the $C^\infty$ approximation on $c_0(\mathcal{A}).$ However, the verification $\dot{C}^{0,\omega}$ convergence after our $\VC^{0,\omega}_{\op{small}}$ condition will require more work. 

Following \cite{HajJoh09} we first construct an approximation that locally depends only on a finite number of coordinates. 

\begin{lemma}\label{lem:preliminaryforc0}
	Let $f\in \VC^{0,\omega}_{\op{small}}(X,Y)$ and $\varepsilon>0.$ Then, there exists $g\in \VC^{0,\omega}_{\op{small}}(X,Y)$ and $r>0$ so that $\|g-f\|_{\dot{C}^{0,\omega}(X,Y)} \leq \varepsilon,$ and for every $x\in X,$ there exists a finite subset $S=S_x$ of $\mathcal{A}$ for which $g(z)=g(P_S(z)),$ whenever $z\in B(x,r).$ 
	
	Moreover, if $f$ has bounded support (resp. Lipschitz), then the above $g$ has bounded supported (resp. Lipschitz) too.
\end{lemma}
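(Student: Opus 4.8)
The plan is to adapt the construction of H\'ajek and Johanis \cite{HajJoh09}, upgrading ``uniform closeness'' to ``$\dot C^{0,\omega}$--closeness''. Fix a radius $r>0$ and a cover $\{B(x_i,r)\}_{i\in I}$ of $X=c_0(\mathcal A)$ of multiplicity at most an absolute constant $N$, together with a subordinate partition of unity $\{\phi_i\}_{i\in I}$, $\sum_{i}\phi_i\equiv1$, such that each $\phi_i$ is $C^\infty$ with $\lip(\phi_i)\lesssim_N 1/r$ and depends, on a neighbourhood of any point, on only finitely many coordinates; such partitions exist on $c_0(\mathcal A)$ precisely because the sup--norm has this local--finite--coordinate property, which is the heart of \cite{HajJoh09}. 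For a small parameter $\eta>0$, pick for each $i$ a finite set $S_i\subset\mathcal A$ with $S_i\supset\{\alpha:|(x_i)_\alpha|>\eta\}$ (finite since $x_i\in c_0(\mathcal A)$); then $\|z-P_{S_i}z\|\le\eta+r$ for all $z\in B(x_i,r)$. The candidate approximation is
\[
g:=\sum_{i\in I}\phi_i\cdot(f\circ P_{S_i}).
\]
Near any point only finitely many $\phi_i$ are nonzero, each depends on finitely many coordinates there, and $P_{S_i}\circ P_S=P_{S_i}$ whenever $S_i\subset S$; hence $g(z)=g(P_{S_x}z)$ on some ball $B(x,r)$, where $S_x$ collects all the relevant coordinates, which is the asserted property.

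For the quantitative claims, the parameters are chosen in this order: using $f\in\VC^{0,\omega}_{\op{small}}(X,Y)$, fix $\delta_1$ with $\osc^\omega_\delta(f)\le\varepsilon$ for $\delta\le\delta_1$; put $\delta_0:=\delta_1/10$; then take $r,\eta\le\delta_0$ small enough that $\omega(2(\eta+r))\le\tfrac12\omega(\delta_0)$. Using $\sum_i\phi_i\equiv1$ I write, for $z\neq w$,
\begin{align*}
(g-f)(z)-(g-f)(w)&=\sum_{i}\phi_i(z)\big[(f(P_{S_i}z)-f(P_{S_i}w))-(f(z)-f(w))\big]\\
&\qquad+\sum_{i}(\phi_i(z)-\phi_i(w))\big(f(P_{S_i}w)-f(w)\big),
\end{align*}
the subtraction of $f(w)$ in the last sum being legitimate because $\sum_i(\phi_i(z)-\phi_i(w))=0$. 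When $\|z-w\|\le\delta_0$: in the first sum $\|P_{S_i}z-P_{S_i}w\|\le\|z-w\|\le\delta_1$, so each bracket is $\le2\varepsilon\,\omega(\|z-w\|)$ and the weights sum to $1$; in the second sum, for every $i$ with $\phi_i(z)\neq0$ or $\phi_i(w)\neq0$ one has $\|P_{S_i}w-w\|\le\eta+r+\|z-w\|\le\delta_1$, so $|f(P_{S_i}w)-f(w)|\le\varepsilon\,\omega(\eta+r+\|z-w\|)$ --- carrying the factor $\varepsilon$ rather than $\|f\|_{\dot C^{0,\omega}}$, which is the decisive gain. Combining this with $\sum_i|\phi_i(z)-\phi_i(w)|\lesssim_N\min\{1,\,r^{-1}\|z-w\|\}$, the doubling \eqref{eq:mod:db}, and the inequality $\|z-w\|/\omega(\|z-w\|)\le\sup_{t\le2r}t/\omega(t)$ --- where $\tfrac{\omega(r)}{r}\sup_{t\le2r}\tfrac{t}{\omega(t)}$ is bounded, uniformly in small $r$, by a constant depending only on $\omega$ (by \eqref{eq:mod:coer1} and \eqref{eq:mod:db}) --- the second sum is also $\lesssim_\omega\varepsilon\,\omega(\|z-w\|)$. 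For $\|z-w\|>\delta_0$ one uses $\|g-f\|_\infty\le\varepsilon\,\omega(\eta+r)\le\tfrac12\varepsilon\,\omega(\delta_0)$ (which follows from $\|P_{S_i}z-z\|\le\eta+r\le\delta_1$ for $z\in B(x_i,r)$) together with the monotonicity of $\omega$. Altogether $\|g-f\|_{\dot C^{0,\omega}(X,Y)}\lesssim_\omega\varepsilon$, and rescaling $\varepsilon$ gives the stated estimate.

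That $g\in\VC^{0,\omega}_{\op{small}}(X,Y)$ follows from the same identity applied to $g$ itself, this time tracking $\delta=\|z-w\|\to0$: the first sum is $\le\varepsilon_f(\delta)\,\omega(\delta)$ with $\varepsilon_f(\delta):=\sup_{t\le\delta}\osc^\omega_t(f)\to0$, while the second sum is $\le C\,\delta$ for a constant $C$ depending on $r,\omega,\|f\|_{\dot C^{0,\omega}}$, hence $\le C\,\tfrac{\delta}{\omega(\delta)}\,\omega(\delta)$ with $\delta/\omega(\delta)\to0$ by \eqref{eq:mod:coer1}; so $\osc^\omega_\delta(g)\to0$. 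The two add-ons are refinements of the same computation: if $f$ is Lipschitz then $|f(P_{S_i}z)-f(P_{S_i}w)|\le\lip(f)\|z-w\|$ and $|f(P_{S_i}w)-f(w)|\le\lip(f)(\eta+r+\|z-w\|)$ give $\lip(g)\le C(N,r)\lip(f)$; and if $f=0$ off $B(0,R)$, then for every $i$ with $\|x_i\|>R+r$ the coordinate $\alpha$ at which $\|x_i\|=\sup_\beta|(x_i)_\beta|$ is attained satisfies $|(x_i)_\alpha|=\|x_i\|>\eta$ (once $\eta<R$) and hence lies in $S_i$, so $\|P_{S_i}z\|\ge|z_\alpha|\ge\|x_i\|-r>R$ and $f(P_{S_i}z)=0$ for $z\in B(x_i,r)$; since $z\notin B(0,R+2r)$ forces every $i$ with $\phi_i(z)\neq0$ to have $\|x_i\|>R+r$, the function $g$ vanishes off $B(0,R+2r)$.

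The main obstacle is the $\dot C^{0,\omega}$ estimate, and within it the term created by the derivatives of the partition of unity: $\lip(\phi_i)$ is of order $1/r$ while $r$ must be small (to control $\|g-f\|_\infty$, hence the large scales), so this term is a priori out of control. It survives only because the hypothesis $f\in\VC^{0,\omega}_{\op{small}}$ converts the error $|f(P_{S_i}w)-f(w)|$ into $\varepsilon\,\omega(\cdot)$ rather than $\|f\|_{\dot C^{0,\omega}}\,\omega(\cdot)$, and because \eqref{eq:mod:coer1} (with \eqref{eq:mod:db}) lets the factor $r^{-1}\|z-w\|$ at small scales be absorbed into $\omega(\|z-w\|)$ with a constant depending only on $\omega$.
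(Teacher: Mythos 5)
Your construction is genuinely different from the paper's, and it breaks down at its very first step. The paper defines a single coordinate-wise ``shrinkage'' map $\varphi:\R\to\R$ (zero on $[-r,r]$, translation by $\mp r$ outside), sets $\phi(x)=(\varphi(x_\alpha))_\alpha$ --- a $1$-Lipschitz self-map of $X$ --- and takes $g:=f\circ\phi$; then $g\in\VC^{0,\omega}_{\op{small}}$ by Lemma~\ref{lem:LipSmall}, the finite-coordinate-dependence and the support/Lipschitz add-ons are immediate, and $\|f-g\|_{\dot C^{0,\omega}}\le\varepsilon$ follows from $\|f-g\|_\infty\le\omega(r)\|f\|_{\dot C^{0,\omega}}$ plus a two-scale split. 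No cover, no partition of unity, no bounded multiplicity.

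The gap in your argument is the existence of a cover of $c_0(\mathcal{A})$ by balls $\{B(x_i,r)\}$ of a fixed radius $r$ with multiplicity bounded by an absolute constant $N$, with a subordinate Lipschitz partition of unity obeying $\sum_i|\phi_i(z)-\phi_i(w)|\lesssim_N\min(1,r^{-1}\|z-w\|)$. Such a cover cannot exist: restricting it to the span of any $n$ coordinate vectors gives an open cover of an isometric copy of $\ell^\infty_n$ by sets of diameter at most $2r$, and the Lebesgue covering lemma applied to a cube of side $4r$ forces the multiplicity to be at least $n+1$; since $n$ is arbitrary, no finite $N$ works. The ``local finite-coordinate'' property of the $c_0$-norm, which you invoke and which \cite{HajJoh09} does exploit, is a property of the norm (it locally factors through a finite-dimensional projection), not a bounded-geometry statement, and it does not yield such a cover --- indeed H\'ajek--Johanis also proceed via a coordinate-wise shrinkage, not a partition of unity. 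A secondary issue, which would persist even if the cover existed: the bound $\tfrac{\omega(r)}{r}\sup_{t\le 2r}\tfrac{t}{\omega(t)}\lesssim_\omega 1$ uniformly in small $r$ is asserted but not proved, and it does not follow from \eqref{eq:mod:coer1} and \eqref{eq:mod:db} alone (it would hold under concavity of $\omega$, which makes $t/\omega(t)$ non-decreasing, but the paper does not assume concavity).
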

\begin{proof}
	Let $f\in \VC^{0,\omega}_{\op{small}}(X,Y)$ and $\varepsilon>0,$ let $\delta>0$ be so that
	\begin{equation}\label{eq:proofprelimc0}
		\sup_{\|u-v\|\leq \delta} \frac{\|f(u)-f(v)\|_Y}{\omega(\|u-v\|)} \leq \frac{\varepsilon}{2}.
	\end{equation}
	By $ \omega(0)=0,$ we find $r>0$ so that $2\omega(r)\| f\|_{\dot C^{0,\omega}(X,Y)} \leq \varepsilon \omega(\delta).$ Let us define 
	$$
	\varphi :  \R \to \R,\qquad \varphi (t)   =  \left\lbrace
	\begin{array}{ccl}
		t+r & \mbox{if } & t \leq - r, \\
		0  & \mbox{if }&  -r \leq t\leq r, \\
		t-r & \mbox{if }& t \geq  r,
	\end{array}
	\right.
	$$
	and 
	$$
	\phi : X \to X,\qquad \phi(x)= \phi((x_\alpha)_{\alpha \in \mathcal{A}}) =( \varphi(x_\alpha) )_{\alpha \in \mathcal{A}}.
	$$
	As clearly $\varphi$ is $1$-Lipschitz, it is immediate that $\phi$ is $1$-Lipschitz. Also, it is immediate that $ \|f-f\circ\phi\|_Y \leq \omega(r)\| f\|_{\dot C^{0,\omega}(X,Y)}.$  Defining $g:= f \circ \phi,$ we have $g \in \VC^{0,\omega}_{\op{small}}(X,Y),$ by virtue of Lemma \ref{lem:LipSmall}. If $x,z\in X$ are such that $\|x-z\| \leq \delta,$ then $\|\phi(x)-\phi(z)\| \leq \delta$ and so 
	$$
	\|g(x)-g(z)\|_Y= \| f(\phi(x))-f(\phi(z))\|_Y \leq \frac{\varepsilon}{2} \omega(\|\phi(x)-\phi(z)\|) \leq \frac{\varepsilon}{2} \omega(\|x-z\|).
	$$
	This estimate, in combination with \eqref{eq:proofprelimc0}, yields,
	$$
	\sup_{\|x-z\|\leq \delta} \frac{\|(f-g)(x)-(f-g)(z)\|_Y}{\omega(\|x-z\|)} \leq \frac{\varepsilon}{2} + \frac{\varepsilon}{2} = \varepsilon.
	$$
	On the other hand, the choice of $r$ permits to estimate by
	$$
	\sup_{\|x-z\|\geq \delta} \frac{\|(f-g)(x)-(f-g)(z)\|_Y}{\omega(\|x-z\|)} \leq \frac{2\|f-g\|_\infty}{\omega(\delta)}  \leq  \frac{2\omega(r)\| f\|_{\dot C^{0,\omega}(X,Y)} }{\omega(\delta)}\leq \varepsilon.
	$$
	We have shown that $\|f-g\|_{\dot{C}^{0,\omega}(X,Y)} \leq \varepsilon.$

	Now, if $x\in X=c_0(\mathcal{A}),$ then (by definition) there exists a finite subset $S = S_x$ such that if $\alpha \in \mathcal{A} \setminus S,$ then $|x_\alpha| \leq r/2.$ Then, for $z \in B(x,r/2)$ and for each $\alpha \in \mathcal{A} \setminus S$ it follows that $|z_\alpha| \leq r,$ implying that $\varphi(z_\alpha)=0.$ It follows for $z\in B(x,r/2)$ that $\phi(z)=\phi(P_S(z))$ and hence that $g(z) = g(P_S(z)).$

	For the second part, suppose there exists $R>0$ so that $f(x)=0$ for all $\|x\| \geq R.$ If $\|z\| \geq R+r,$ then $|z_{\alpha_0} | \geq R + r$ for some $\alpha_0 \in \mathcal{A}.$ Consequently
	$$
	\|\phi(z)\| = \sup_{\alpha\in \mathcal{A}} |\varphi(z_\alpha)| \geq | \varphi(z_{\alpha_0}) | \geq |z_{\alpha_0}|-r \geq R,
	$$
	and thus $g(z)=f(\phi(z))=0.$
	Finally, if $f$ is Lipschitz, then clearly $g= f \circ \phi$ is Lipschitz.

\end{proof}

\begin{proof}[Proof of Theorem \ref{thm:main:c0YApprox}]
	
	In the proof of part (i), by Lemma \ref{lem:preliminaryforc0}, given a fixed $r>0,$ it is enough to approximate functions $g\in \VC^{0,\omega}_{\op{small}}(X,Y)$ with the property that for every $x\in X,$ there exists finite $ S_x \subset \mathcal{A}$ such that $g(z)=g(P_{S_x}(z)),$ for all $z\in B(x,2r).$

	Given $\varepsilon,$ let $\delta>0$ be so that 
	\begin{equation}\label{eq:mainc0theoremchoicedelta}
		\|g(u)-g(v)\|_Y \leq \varepsilon \omega(\|u-v\|) , \quad \text{whenever} \quad \|u-v\| \leq \delta.
	\end{equation}
	
	Moreover, for $0<\eta<\min(r, \varepsilon)$ small enough (more precisely, so that $\omega(\eta) \leq \varepsilon \omega(\delta)/ (1+\|g\|_{\dot{C}^{0,\omega}})),$ let $\theta: \R \to \R$ be an even $C^\infty$ smooth non-negative function such that $\int_{\R} \theta=1$ and $\theta = 0$ on $\R \setminus  [ -  \eta,  \eta].$ For every finite set $F\subset \mathcal{A},$ and $x\in X,$ we define the Bochner integral
	\begin{align}\label{eq:expressionc0mollification}
		h_F(x)
		:=\int_{[-\eta,\eta]^{|F|}} g \Big ( x- \sum_{\alpha \in F} t_\alpha e_\alpha \Big ) \prod_{\alpha \in F} \theta(t_\alpha)  \ud  \lambda_{|F|}(t),
	\end{align}
where $\lambda_{|F|}$ is the Lebesgue measure on $\R^{|F|}.$ 
That the integral $h_F(x)$ is well-defined for every $x\in X$ can be justified with the same arguments we used in the proof of Theorem \ref{thm:main:Euc}. Namely, if $Q:=[-\eta,\eta]^{|F|},$ the function
$$
Q \ni t \mapsto \sigma_x(t):=g \Big ( x- \sum_{\alpha \in F} t_\alpha e_\alpha \Big ) \prod_{\alpha \in F} \theta(t_\alpha) 
$$
is uniformly continuous $Q \to Y$, by the compactness of $Q$ and the continuity of the functions $g$ and $\theta.$ For each $y^* \in Y^*,$ the mapping $Q \to \R$ given by $Q \ni t \mapsto y^*( \sigma_x(t))$ is continuous, and thus $\lambda_{|F|}$-measurable over $Q.$ By continuity of $\sigma_x$, the image $\sigma_x(Q)$ is compact, and so $\overline{\mathrm{span}}(\sigma_x(Q))$ is separable. By \cite[Proposition 5.1]{BenyaminiLindenstrauss}, this shows the measurability of $\sigma_x$ in $Q.$ Also, by \cite[Proposition 5.2]{BenyaminiLindenstrauss} $\sigma_x$ is Bochner integrable if $\int_Q \| \sigma_x(t) \|_Y \ud  \lambda_{|F|}(t) < \infty,$ which is true because $\theta$ is a test function, and $g,$ being in $\dot{C}^{0,\omega}(X,Y),$ is bounded on bounded sets.

Because $g(z)=g(P_{S_x}(z))$ for all $z\in B(x,2r)$, the fact that $\eta < r$ implies that $h_{S_x}=h_{S_x} \circ P_{S_x}$ on the ball $B(x,r).$ Therefore
	$$
	h_{S_x}(z)  =\int_{[-\eta,\eta]^{|S_x|}} g \Big ( P_{S_x}(z)- \sum_{\alpha \in S_x} t_\alpha e_\alpha \Big ) \prod_{\alpha \in S_x} \theta(t_\alpha)  \ud \lambda_{|S_x|}(t), \quad z\in B(x,r).
	$$  
	Since $g\in  \dot{C}^{0,\omega}(X,Y)$ and $P_{S_x}$ is $1$-Lipschitz, the function $z\mapsto (g \circ P_{S_x})(z)$ belongs to $\dot{C}^{0,\omega}(X,Y)$, implying that $z\mapsto (g \circ P_{S_x})(z)$ is uniformly continuous on $X.$ Therefore, by the previous formula, $B(x,r) \ni z \mapsto h_{S_x}(z)$ is a finite dimensional smooth mollification (in the Bochner sense) with a uniformly continuous mapping.  As we explained in the proof of Theorem \ref{thm:main:Euc}, this guarantees that $h_{S_x} \in C^\infty(B(x,r),Y).$ Moreover, using Fubini's theorem, one can easily check that if $F\subset \mathcal{A}$ is finite and $F\supset S_x,$ then also $h_F=h_{S_x}$ on $B(x,r).$ This enables us to define the desired mapping $h:X \to Y$ in the following manner. Given $x\in X,$ let $S_x \subset \mathcal{A}$ be the subset described above, and define $h(x):=h_{S_x}(x).$ Moreover, ordering the collection of finite subsets $\mathcal{F}(\mathcal{A})$ of $\mathcal{A}$ by inclusion, the pointwise limit of the net $\lbrace h_S \rbrace_{S\in \mathcal{F}(\mathcal{A})}$ is exactly $h.$

Now, for every fixed ball $B(x,r)$ and $z\in B(x,r)$, let $F= S_x \cup S_z$. By the definition of $h$ and the above mentioned properties, we derive
$$
h(z)=h_{S_z}(z)=h_{F}(z)= h_{S_x}(z).
$$
Since $z\mapsto h_{S_x}(z)$ is of class $C^\infty(B(x,r),Y)$, the above yields $h\in C^\infty(X,Y).$ All these properties were stated and proved in \cite[Lemma 6]{HajJoh09}.

	We next show that $h\in \VC_{\op{small}}^{0,\omega}(X,Y).$ Given any two points $x,z\in X,$ let $S_x, S_z\subset\mathcal{A}$ be the finite subsets associated with $x$ and $z,$ and set $S=S_x \cup S_z.$ Since $h(x)=h_S(x)$ and $h(z)=h_S(z)$ we have
	\begin{equation}\label{eq:c0verificationVCsmall}
		\begin{split}
			\|h(x)-h(z)\|_Y & = \Big \| \int_{[-\eta,\eta]^{|S|}} \Big ( g \big ( x- \sum_{\alpha \in S}  t_\alpha e_\alpha  \big )-g \big ( z- \sum_{\alpha \in S}  t_\alpha e_\alpha  \big ) \Big)  \prod_{\alpha \in S} \theta(t_\alpha)  \ud \lambda_{|S|}(t) \Big \|_Y \\
			& \leq \int_{[-\eta,\eta]^{|S|}} \Big \|  g \big ( x- \sum_{\alpha\in S}  t_\alpha e_\alpha  \big )-g \big ( z- \sum_{\alpha\in S}  t_\alpha e_\alpha  \big ) \Big \|_Y   \prod_{\alpha \in S} \theta(t_\alpha)  \ud \lambda_{|S|}(t) \\
			&\leq \sup_{\substack{u,v \in X, \\ \|u-v\|=\|x-z\|}}\|g(u)-g(v)\|_Y, 
		\end{split}
	\end{equation}
	bearing in mind that $\int_{[-\eta,\eta]^{|S|}} \prod_{\alpha \in S} \theta(t_\alpha)  \ud \lambda_{|S|}(t) =1.$  It follows immediately from the bound \eqref{eq:c0verificationVCsmall} that $\|h\|_{\dot{C}^{0,\omega}(X,Y)} \leq \|g\|_{\dot{C}^{0,\omega}(X,Y)}$ and $h\in \VC_{\op{small}}^{0,\omega}(X,Y).$ Now if $\|x-z\| \leq \delta,$ then together \eqref{eq:c0verificationVCsmall} and \eqref{eq:mainc0theoremchoicedelta} give
	\begin{align*}
		&\|(g-h)(x)-(g-h)(z)\|_Y   \leq \|g(x)-g(z)\|_Y + \| h(x)-h(z)\|_Y   \leq 2 \varepsilon \omega(\|x-z\|).
	\end{align*}
	Hence to verify $\|g-h\|_{\dot{C}^{0,\omega}(X,Y)} \leq 2\varepsilon,$ it remains to check the case $\|x-z\| \geq \delta.$ To do so, let us see that $\sup_X \|g-h\| \leq  \varepsilon \omega(\delta).$ Indeed, if $x\in X$ and $S=S_x,$ then again $h(x) = h_{S_x}(x),$ and we obtain 
	\begin{align*}
		\|h(x)-g(x)\|_Y & \leq   \int_{[-\eta,\eta]^{|S|}} \Big \|g \Big ( x- \sum_{\alpha\in S} t_\alpha e_\alpha \Big )-g(x) \Big\|_Y \prod_{\alpha \in S} \theta(t_\alpha)  \ud \lambda_{|S|}(t) \\
		& \leq \|g\|_{\dot{C}^{0,\omega}(X,Y)}\int_{[-\eta,\eta]^{|S|}} \omega \Big ( \Big \| \sum_{\alpha \in S} t_\alpha e_\alpha \Big \| \Big )  \prod_{\alpha \in S} \theta(t_\alpha)  \ud \lambda_{|S|}(t) \\
		&\leq \|g\|_{\dot{C}^{0,\omega}(X,Y)} \omega(\eta) \leq \varepsilon \omega(\delta),
	\end{align*}
	where we chose $\eta$ small enough in the beginning of the proof for the last bound to hold.
	Now, if $x,z\in X$ are such that $\|x-z\| \geq \delta,$ then
	$$
	\|(g-h)(x)-(g-h)(z)\| \leq 2 \sup_X \|g-h\| \leq 2 \varepsilon \omega(\delta) \leq 2\varepsilon \omega(\|x-z\|).
	$$
	We conclude that $\|g-h\|_{\dot{C}^{0,\omega}(X,Y)} \leq 2\varepsilon$ and complete the proof of Theorem \ref{thm:main:c0YApprox}(i).

	For part (ii), given $f\in \VC^{0,\omega}(X,Y),$ by Theorem \ref{thm:approx:bs}, we approximate $f$, in the $\dot{C}^{0,\omega}(X,Y)$-seminorm by a $g\in \VC^{0,\omega}_{\op{small}}(X,Y)$ with bounded support. By Lemma \ref{lem:preliminaryforc0}, we can assume that for every $x\in X$ there exists a finite $S = S_x$ such that $g=g\circ P_S$ on $B(x,r),$ and for a uniform fixed $r>0.$ Let us next check that the approximation $h$ from part (i) has bounded support. Let $R>0$ be so that $g$ is zero on $X\setminus B(0,R).$ Assume that $\|x \| \geq R + \eta.$  There exists a finite set $S=S_x \subset \mathcal{A}$ so that $h(x)=h_S(x)$. Now, for $t=(t_\alpha)_{\alpha \in S} \in [-\eta,\eta]^{|S|},$ there holds that
	$$
	\Big \| x- \sum_{\alpha \in S} t_\alpha e_\alpha \Big \| \geq \|x\|- \Big \| \sum_{\alpha \in S} t_\alpha e_\alpha \Big \| \geq \|x\| -  \eta \geq R$$
	and thus
	$$
	h(x)=h_S(x)=   \int_{[-\eta,\eta]^{|S|}} g \Big ( x- \sum_{\alpha \in S} t_\alpha e_\alpha \Big ) \prod_{\alpha \in S} \theta(t_\alpha) \ud \lambda_{|S|}(t) =0.
	$$
	The proof of Theorem \ref{thm:main:c0YApprox}(ii) is now complete.

	\smallskip

	Finally, in the particular case $Y=\R,$ let us establish the desired Lipschitz approximations. Let $f\in \VC^{0,\omega}_{\op{small}}(X)$ be the function to be approximated. By Theorem \ref{thm:main:LipApprox} we can assume that $f$ is Lipschitz, and by Lemma \ref{lem:preliminaryforc0}, there is a Lipschitz approximation $g$ of $f$, with $g$ locally depending only on a finite number of coordinates around each ball of fixed radius $r>0.$ Repeating the construction of $h \in C^\infty(X,)$, we immediately see from the estimate \eqref{eq:c0verificationVCsmall}, that $h$ is Lipschitz, and in fact $\lip(h) \leq \lip(g).$ 
 Because any Lipschitz mapping in $\dot{C}^{0,\omega}(X)$ belongs to $\VC_{\op{small}}^{0,\omega}(X),$ by the condition \eqref{eq:mod:coer1}, the identity in $(i)$ now follows at once.

	\smallskip
	
	As for approximation of an $f\in \VC^{0,\omega}(X)$, we apply Theorem \ref{thm:main:LipApprox} to reduce matters to $f\in \lip_{\op{bs}}(X)$. Then the function $g$ of Lemma \ref{lem:preliminaryforc0} can be taken Lipschitz and with bounded support, and so can the function $h$ we constructed in (i) and (ii) of the present theorem. Thus $h\in  C^\infty_{\op{bs}} (X)\cap \lip(X)$ approximates $f$ in the $\dot{C}^{0,\omega}(X)$ seminorm. Because any Lipschitz mapping with bounded support belongs to $\VC^{0,\omega}(X),$ the identity in (ii) now follows at once. 
	
\end{proof}

For the particular case of $\mathcal{A}= \lbrace 1, \ldots, n \rbrace,$ $c_0(\mathcal{A})$ becomes $\R^n$ with the supremum norm, which is of course equivalent with the usual Euclidean setting. In $\R^n$ functions with bounded support have compact support and thus Theorem \ref{thm:main:c0YApprox} has the following consequence. 
\begin{corollary}
	Let $Y$ be an arbitrary Banach space. Then:
	\begin{itemize}
		\item[(i)] $\overline{C^{\infty}(\R^n,Y) \cap \dot{C}^{0,\omega}(\R^n,Y)}^{\dot{C}^{0,\omega}(\R^n,Y)}  =\VC^{0,\omega}_{\op{small}}(\R^n,Y),$
		\item[(ii)] $\overline{C^{\infty}_{c}(\R^n,Y)}^{\dot{C}^{0,\omega}(\R^n,Y)} = \VC^{0,\omega}(\R^n,Y).$
	\end{itemize}
	
\end{corollary}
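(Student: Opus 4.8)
The plan is to read the corollary off from Theorem~\ref{thm:main:c0YApprox} by choosing the index set $\mathcal{A}=\{1,\dots,n\}$, so that $c_0(\mathcal{A})$ is precisely $\R^n$ equipped with the supremum norm $\|x\|_\infty=\max_{1\le i\le n}|x_i|$, and then transporting the two conclusions from $\|\cdot\|_\infty$ to the Euclidean norm. Only two routine points need to be reconciled: that all the function classes appearing in the statement are unaffected by passing to an equivalent norm on $\R^n$, and that $C^\infty_{\op{bs}}$ and $C^\infty_c$ coincide on $\R^n$.

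\emph{Step 1: renorming invariance.} I would first observe that, because $\omega$ is doubling~\eqref{eq:mod:db}, replacing a norm on $\R^n$ by an equivalent one leaves everything in the statement untouched. Indeed, if $a\|z\|\le\|z\|'\le b\|z\|$ for all $z\in\R^n$, then monotonicity of $\omega$ together with finitely many applications of~\eqref{eq:mod:db} yields $\omega(\|x-y\|)\sim_{a,b,C_{\op{db}}}\omega(\|x-y\|')$ for all $x\ne y$; hence the two $\dot C^{0,\omega}$-seminorms are comparable, so $\dot C^{0,\omega}(\R^n,Y)$ is the same space with the same topology, in particular with the same closed subsets. Using the equivalent description ``$f\in\VC^{0,\omega}_{\op{small}}$ iff for every $\varepsilon>0$ there is $\delta_0>0$ with $|f(x)-f(y)|\le\varepsilon\,\omega(\|x-y\|)$ whenever $\|x-y\|<\delta_0$'' (and the analogous reformulations for the scales $\op{large}$ and $\op{far}$), the same comparison of moduli shows that $\VC^{0,\omega}_{\op{small}}$, $\VC^{0,\omega}_{\op{large}}$, $\VC^{0,\omega}_{\op{far}}$, and hence $\VC^{0,\omega}$, are all unchanged. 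Finally $C^\infty(\R^n,Y)$, and its subclass of maps with bounded (equivalently: relatively compact) support, depend only on the topology of $\R^n$. It therefore suffices to prove the two identities for $X:=(\R^n,\|\cdot\|_\infty)$.

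\emph{Step 2: specializing Theorem~\ref{thm:main:c0YApprox}.} With $\mathcal{A}=\{1,\dots,n\}$ one has $X=c_0(\mathcal{A})$. Part~(i) of Theorem~\ref{thm:main:c0YApprox} gives $\overline{C^\infty(X,Y)\cap\VC^{0,\omega}_{\op{small}}(X,Y)}^{\dot C^{0,\omega}(X,Y)}=\VC^{0,\omega}_{\op{small}}(X,Y)$, which after Step~1 is item~(i). For item~(ii), note that on $\R^n$ a bounded support is relatively compact, so $C^\infty_{\op{bs}}(\R^n,Y)=C^\infty_c(\R^n,Y)$; moreover every $\varphi\in C^\infty_c(\R^n,Y)$ is Lipschitz with bounded support, hence lies in $\VC^{0,\omega}(\R^n,Y)$ by~\eqref{eq:mod:coer1} (exactly as used in the proof of Theorem~\ref{thm:main:Euc}). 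Consequently $C^\infty_{\op{bs}}(X,Y)\cap\VC^{0,\omega}(X,Y)=C^\infty_c(\R^n,Y)$, so part~(ii) of Theorem~\ref{thm:main:c0YApprox} reads $\overline{C^\infty_c(\R^n,Y)}^{\dot C^{0,\omega}(X,Y)}=\VC^{0,\omega}(X,Y)$, and Step~1 again delivers item~(ii).

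\emph{Main obstacle.} There is no substantial obstacle here: all the analytic content is already packaged in Theorem~\ref{thm:main:c0YApprox}. The one place that deserves genuine, if routine, care is Step~1 — checking that the doubling hypothesis on $\omega$ really does make the $\dot C^{0,\omega}$-seminorm and each of the three vanishing scales invariant under an equivalent renorming of $\R^n$; everything else is bookkeeping.
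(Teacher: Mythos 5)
Your overall route — specialize Theorem~\ref{thm:main:c0YApprox} to $\mathcal{A}=\{1,\dots,n\}$, identify $c_0(\mathcal{A})=(\R^n,\|\cdot\|_\infty)$, and transport via norm equivalence — is exactly what the paper has in mind, and your Step~1 usefully makes explicit the renorming-invariance that the paper only asserts with the phrase ``of course equivalent with the usual Euclidean setting.'' Your handling of item~(ii) is correct: on $\R^n$, bounded support equals compact support, and the Mean Value Inequality gives $C^\infty_c(\R^n,Y)\subset\lip_{\op{bs}}(\R^n,Y)\subset\VC^{0,\omega}(\R^n,Y)$ by~\eqref{eq:mod:coer1}, so indeed $C^\infty_{\op{bs}}(X,Y)\cap\VC^{0,\omega}(X,Y)=C^\infty_c(\R^n,Y)$ and Theorem~\ref{thm:main:c0YApprox}(ii) reads off as item~(ii).

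There is, however, a gap in your treatment of item~(i), and it is worth naming because the equality you derive is not the one written in the corollary. Theorem~\ref{thm:main:c0YApprox}(i) concerns the closure of $C^\infty(X,Y)\cap\VC^{0,\omega}_{\op{small}}(X,Y)$, whereas item~(i) concerns the closure of $C^\infty(\R^n,Y)\cap\dot C^{0,\omega}(\R^n,Y)$; you silently identify these two sets, but they are different. Since $\VC^{0,\omega}_{\op{small}}\subsetneq\dot C^{0,\omega}$, the inclusion $``\supset"$ in item~(i) does follow from Theorem~\ref{thm:main:c0YApprox}(i), but the inclusion $``\subset"$ (equivalently, using that $\VC^{0,\omega}_{\op{small}}$ is closed, the inclusion $C^\infty\cap\dot C^{0,\omega}\subset\VC^{0,\omega}_{\op{small}}$) is an additional claim that your argument does not address — and, as written, it in fact fails. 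For instance, on $\R$ with $\omega(t)=t^{1/2}$, take a smooth bump $\psi\in C^\infty_c(-1,1)$ with $\psi(0)=1$ and set
\[
f(x)=\sum_{k\ge 1} 2^{-k/2}\,\psi\bigl(2^k(x-10k)\bigr).
\]
The summands have pairwise disjoint, locally finite supports, so $f\in C^\infty(\R)$; a direct check (splitting into cases according to whether $x,y$ hit the same bump, different bumps, or the complement of all bumps) gives $f\in\dot C^{0,1/2}(\R)$; yet $\osc^{\omega}_{2^{-k}}(f)\ge \lvert f(10k)-f(10k+2^{-k})\rvert / (2^{-k})^{1/2}=1$ for every $k$, so $f\notin\VC^{0,1/2}_{\op{small}}(\R)$. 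So a $C^\infty$ function in $\dot C^{0,\omega}$ need not have vanishing small-scale oscillation. This means either the corollary's item~(i) should read $C^\infty(\R^n,Y)\cap\VC^{0,\omega}_{\op{small}}(\R^n,Y)$ on the left (matching Theorem~\ref{thm:main:c0YApprox}(i) exactly, and then your proof works verbatim), or one needs some extra condition such as intersecting with $\lip(\R^n,Y)$ as in Theorems~\ref{thm:approx:C^k} and~\ref{thm:intro:super-reflexive}. In a blind proof attempt you should at least have noticed that the class inside the closure in Theorem~\ref{thm:main:c0YApprox}(i) does not literally match the one in item~(i), and either supplied the missing inclusion or flagged the mismatch.
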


\appendix

\section{Comparison of $\dot C^{0,\omega}$ and uniform convergence}\label{appendix:comparison}

We begin with a remark concerning the two types of convergences we are dealing with. We will refer to the norm $||| \cdot |||_{C^{0,\omega}(X,Y)}$ defined in \eqref{eq:BanachNorm}.

\begin{remark}\label{remark:A1}
{\em Let $X$ and $Y$ be normed spaces and $\omega$ any modulus of continuity. Suppose that a sequence of functions $(f_n)_n \subset   \dot C^{0,\omega}(X,Y) $ converges to $f,$ with respect to the norm $|||\cdot|||_{ C^{0,\omega}(X,Y)}.$ Then also $f_n \to f$ uniformly on bounded subsets of $X.$

Indeed, let $B$ a bounded subset of $X$ with $0 \in B.$ Given $\varepsilon>0,$ let $N$ be so that $|||f_n-f|||_{  C^{0,\omega}(X,Y)} \leq \varepsilon/(1+ \omega(\diam
(B)))$ for all $n \geq N.$ Then, for those $n\geq N,$ and $x \in B,$ we can write 
\begin{align*}
|f(x)-f_n(x)| &  \leq |f(0)-f_n(0)|  + |(f-f_n)(x)-(f-f_n)(0)| \\
& \leq \left(1+\omega(\|x\|) \right) |||f_n-f|||_{  C^{0,\omega}(X,Y)} \leq \left( 1+ \omega(\diam(B)) \right) |||f_n-f|||_{  C^{0,\omega}(X,Y)} \leq \varepsilon.
\end{align*}
}
\end{remark}

However, the convergence in $\dot C^{0,\omega}$ does not imply \textit{global} uniform convergence, even if we assume that the pertinent sequence of functions is globally uniformly bounded, as the next example shows. 

\begin{example}\label{example:2}
{\em Let $\omega$ be of the form $\omega(t)=t^\alpha,$ with $0 < \alpha \leq 1.$ Define $f_n : \R \to \R$ as the \textit{continuous} function given by $f_n=0$ on $(-\infty,0] \cup [2n, +\infty),$ $f_n(n)=1$, and $f_n$ affine on both $[0,n]$ and $[n,2n].$ It is immediate that $\|f_n\|_\infty \leq 1$ for all $n,$ and that $(f_n)_n$ does not converge to $0$ uniformly on $\R.$ However, one has $ f_n  \to 0$ in the $\dot C^{0,\omega}(\R)$ sense. This easily follows after observing that
$$
 \sup_{n \leq s<t \leq 2n } \frac{|f_n(t)-f_n(s)|}{\omega(|t-s|)} =\sup_{0 \leq s<t \leq n } \frac{|f_n(t)-f_n(s)|}{\omega(|t-s|)} = \sup_{0 \leq s<t \leq n } \frac{1}{n}\frac{t-s}{(t-s)^{\alpha}} = \frac{1}{n^\alpha}.
$$

}
\end{example}

Finally, we next show that the converse to Remark \ref{remark:A1} is false, by providing a sequence $(f_n)_n \subset \dot C^{0,\omega}$ and $f \in  \dot C^{0,\omega}$ so that $f_n$ converges uniformly on $\R,$ and $f_n$ does not converge to $f$ in the $\| \cdot \|_{\dot C^{0,\omega}}$ norm on bounded subsets.  
\begin{example}\label{example:3}
{\em  Define $f=0$ on $\R$ and the \textit{continuous} functions $f_n: \R \to [0,1]$ given by
 \begin{align*} 
		f_n(t)= \begin{cases}
			1/\sqrt{n},\qquad & t=0, \\
			\text{affine},\qquad &  t\in [0,1/n],\\
			0,\qquad & t \geq 1/n, \\
			f_n(-t), \qquad & t \leq 0.  
		\end{cases}
	\end{align*}
Then $\sup_{t\in \R} |f_n(t)| = 1/\sqrt{n}, $ yielding $\|f_n-f\|_\infty \to 0.$ Also, each $f_n$ is $\sqrt{n}$-Lipschitz:
$$
\frac{|f_n(0)-f_n(1/n)|}{|0-1/n|} = \frac{1/\sqrt{n}}{1/n}= \sqrt{n}. 
$$
Because $f_n $ is bounded, then $f_n \in \dot C^{0,\omega}(\R)$ for all moduli $\omega$ that satisfy $\lim_{t \to 0} t/\omega(t) < \infty.$

However, if $\omega(t)=t^\alpha $ with $\alpha \in (1/2,1],$ then $\lim_n \|f_n-f\|_{\dot C^{0,\omega}([0,1])} = \infty,$ since
$$
\lim_n \|f_n-f\|_{\dot C^{0,\omega}([0,1])}  \geq \lim_n \frac{|f_n(0)-f_n(1/n)|}{\omega(|0-1/n|)} =\lim_n n^{\alpha-\frac{1}{2}} = \infty. 
$$}
\end{example}

\bibliographystyle{abbrv}
\bibliography{references_AiM240611_MudOik}

\end{document}